\def\@begintheorem#1#2{%
   \trivlist
   \item[\hskip \labelsep{\bfseries #1\ #2.}]%
   \itshape}
\def\@opargbegintheorem#1#2#3{%
   \trivlist
   \item[\hskip \labelsep{\bfseries #1\ #2.\ (#3)}]%
   \itshape}
\theoremstyle{plain}
\newtheorem{thm}{Theorem}[subsection]
\newtheorem{sthm}{Theorem}[section]
\newtheorem{lem}[thm]{Lemma}
\newtheorem{prop}[thm]{Proposition}
\newtheorem{cor}[thm]{Corollary}
\newtheorem{slem}[sthm]{Lemma}
\newtheorem{sprop}[sthm]{Proposition}
\theoremstyle{remark}
\newtheorem{rem}[thm]{Remark}
\newtheorem{srem}[sthm]{Remark}
\theoremstyle{definition}
\newtheorem{dfn}[thm]{Definition}
\newtheorem{sdfn}[sthm]{Definition}
\theoremstyle{plain}
\newtheorem{theor}{Theorem}
\newtheorem{nclaim}{Claim}
\newcommand{\QLS}{{\rm QLS}}
\newcommand{\B}{{\mathcal{B}}}
\newcommand{\QB}{{\mathcal{QB}}}
\newcommand{\QBG}{{\rm QBG}}
\newcommand{\qwt}{{\rm qwt}}
\newcommand{\ed}{{\rm end}}
\newcommand{\id}{{e}}
\newcommand{\gch}{{\rm gch}}
\newcommand{\wt}{{\rm wt}}
\newcommand{\Dg}{{\rm Deg}}
\newcommand{\Dgw}{{\rm Deg}^{w \lon}}
\newcommand{\dr}{{\rm dir}}
\newcommand{\aff}{{\rm aff}}
\newcommand{\ext}{{\rm ext}}
\newcommand{\eqdef}{:=}
\newcommand{\eqdefr}{=:}
\newcommand{\ardef}{\overset{\rm def}{\Leftrightarrow}}
\newcommand{\bqed}{\quad \hbox{\rule[-0.5pt]{6pt}{6pt}}  \vspace{3mm}}
\newcommand{\lon}{w_\circ}
\newcommand{\lons}{w_\circ (S)}
\newcommand{\tra}{t (\lambda_- ) }
\newcommand{\inv}{^{-1}}
\newcommand{\bp}{\mathbf{p}}
\newcommand{\bq}{\mathbf{q}}
\newcommand{\pair}[2]{\langle #1,\,#2 \rangle}
\newcommand{\Fg}{{\mathfrak{g}}}
\newcommand{\BZ}{\mathbb{Z}}
\newenvironment{enu}{%
 \begin{enumerate}%
}{\end{enumerate}}
\newcommand{\trr}{\widetilde{\Delta}_{\aff}}
\newcommand{\ptrr}{\widetilde{\Delta}_{\aff}^{+}}
\newcommand{\ntrr}{\widetilde{\Delta}_{\aff}^{-}}
\newcommand{\qad}{U_{\mathsf{v}}(\Fg_\aff)}
\newcommand{\qa}{U'_{\mathsf{v}}(\Fg_\aff)}
\newcommand{\da}{W_{\mathsf{v}}(\lambda)}
\title[Generalized Weyl modules and level-zero Demazure modules]{Generalized Weyl modules and Demazure submodules of level-zero extremal weight modules.}
\author[F.~Nomoto]{Fumihiko Nomoto}
\address[Fumihiko Nomoto]
 {Department of Mathematics, Tokyo Institute of Technology,
  2-12-1 Oh-Okayama, Meguro-ku, Tokyo 152-8551, Japan}
\email{nomoto.f.aa@m.titech.ac.jp}
\begin{document}
\maketitle
\begin{abstract}
We study a relationship between
the graded characters of 
generalized Weyl modules $W_{w \lambda}$, $w \in W$,
over the positive part of the affine Lie algebra and
those of specific quotients $V_{w}^- (\lambda) / X_{w}^- (\lambda)$, $w \in W$,
of the Demazure submodules $V_{w}^- (\lambda)$
of the extremal weight modules $V(\lambda)$ over the quantum affine algebra,
where $W$ is the finite Weyl group and $\lambda$ is a dominant weight.
\end{abstract}

\subjclass{{\scriptsize {\it Mathematics Subject Classification} 2010: Primary 17B37; Secondary 33D52, 81R10, 81R50.}}

%

\section{Introduction}
Let $\Fg$ be a finite-dimensional simple Lie algebra over $\mathbb{C}$,
and $\Fg_\aff$ the untwisted affine Lie algebra associated
to $\Fg$.
In \cite{OS}, Orr-Shimozono gave a formula for the 
specialization at $t=0$ of nonsymmetric Macdonald polynomials
in terms of quantum alcove paths;
they also gave a similar formula for the specialization at $t=\infty$.
Recently, for a dominant weight $\lambda$ for $\Fg$ and
an arbitrary element $w$ of the finite  Weyl group $W$,
Feigin-Makedonskyi \cite{FM} introduced a generalized Weyl module, denoted by $W_{w \lon \lambda}$, over
the positive part $\mathfrak{n}_\aff$
of the affine Lie algebra $\Fg_\aff$, and proved that its graded character
equals a certain graded character $C_{w}^{t(\lon \lambda)}$ of the set $\QB(w; t(\lon \lambda))$ of quantum alcove paths
with starting point $w$ and directions given by a reduced decomposition of $t(\lon \lambda)$,
where 
$\lon$ is the longest element of $W$, and
$t(\lon \lambda)$ is an element of the extended affine Weyl group
giving the translation by $\lon \lambda$.


Soon afterward, in \cite{NNS},
we proved that the graded character of the specific
quotient module $V_{w}^- (\lambda) / X_{w}^- (\lambda)$ of $V_{w}^- (\lambda)$
is identical to a certain graded character $\gch_w \QLS(\lambda)$ of the set $\QLS(\lambda)$ of
quantum Lakshmibai-Seshadri (QLS) paths of shape $\lambda$;
here, $V(\lambda)$ is the extremal weight module
of extremal weight $\lambda$
over the quantum affine algebra $\qad$,
and $V_w^- (\lambda) \subset V(\lambda)$ is the Demazure submodule over the negative part $U^{-}_{\mathsf{v}}(\Fg_\aff)$ of $\qad$.
In particular, in the case $w = \lon$,
this graded character was shown to be
the specialization $E_{\lon \lambda}(q, \infty)$
of the nonsymmetric Macdonald polynomial $E_{\lon \lambda}(q, t)$ at $t=\infty$.

Also, in \cite{LNSSS3}, it is proved that for an arbitrary
$w \in W$, the specialization $E_{w \lambda}(q, 0)$
at $t = 0$ is identical to the graded character of a specific quotient $U_{w}^{+}(\lambda)$ of the Demazure
submodule $V_{w}^{+}(\lambda) \subset V(\lambda)$.
Here we remark that for $w \in W$, $U_{w}^{+}(\lambda) \subset U_{\lon}^{+}(\lambda) = V_{\lon}^{+}(\lambda)/X_{\lon}^{+}(\lambda)$ in the notation of [NS2]; in contrast, there is no inclusion relation between $U_{\lon}^{+}(\lambda)$ and
$V_{w}^{+}(\lambda)/X_{w}^{+}(\lambda)$, or equivalently,
between $U_{e}^{-}(\lambda)$ and $V_{w}^{-}(\lambda)/X_{w}^{-}(\lambda)$.

The purpose of this paper is to reveal the relationship between
the graded character of the generalized Weyl module $W_{w\lambda}$ over $\mathfrak{n}_\aff$ and 
that of the quotient module $V_w^-(\lambda) / X_w^- (\lambda)$ of $V_w^-(\lambda)$ for an arbitrary $w \in W$.
More precisely,
we prove the following.
\begin{theor}[{=Theorem \ref{thm:eq_character}}]\label{intro:main_theorem}
Let $\lambda$ be a dominant weight, and $w \in W$.
Then, there holds the following equality{\rm:}
\begin{equation*}
\lon (\overline{\gch W_{w \lon \lambda}}) = \gch (V_{\lon w \lon}^-(\lambda) / X_{\lon w \lon}^-(\lambda)).
\end{equation*}
\end{theor}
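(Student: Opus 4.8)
The plan is to reduce both sides to graded characters of combinatorial path models via the two results recalled in the introduction, and then to prove the resulting combinatorial identity by analyzing the symmetry of the quantum Bruhat graph induced by the longest element $\lon$.

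First I would rewrite the left-hand side. By the theorem of Feigin--Makedonskyi \cite{FM}, $\gch W_{w\lambda} = C_{w}^{t(\lon\lambda)}$, the graded character of the set $\QB(w; t(\lon\lambda))$ of quantum alcove paths, so that $\lon(\overline{\gch W_{w\lambda}}) = \lon(\overline{C_{w}^{t(\lon\lambda)}})$, where $\overline{(\cdot)}$ is the bar involution $q \mapsto q^{-1}$ and $\lon$ acts on weights by $\mu \mapsto \lon\mu$. By the theorem of \cite{NNS}, the right-hand side equals $\gch_{\lon w\lon}\QLS(\lambda)$, the graded character of the set $\QLS(\lambda)$ of quantum Lakshmibai--Seshadri paths of shape $\lambda$ associated to the index $\lon w\lon$. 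Hence the theorem is equivalent to the combinatorial identity
\begin{equation*}
\lon\bigl(\overline{C_{w}^{t(\lon\lambda)}}\bigr) = \gch_{\lon w\lon}\QLS(\lambda).
\end{equation*}

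To prove this, I would use the weight- and degree-preserving correspondence between the quantum alcove model and quantum LS paths, under which the statistic $\qwt$ on $\QB(w; t(\lon\lambda))$ matches the degree function $\Dg$ on $\QLS(\lambda)$ and the directions $\dr$ are compatible. The decisive structural input is then an involution induced by $\lon$ on the combinatorial data whose net effect is to relabel the index $w$ as $\lon w\lon$, to send each weight $\mu$ to $\lon\mu$, and to replace the degree statistic by its complement. This complementation of the degree is exactly what the bar involution $q \mapsto q^{-1}$ encodes; at the level of the quantum Bruhat graph $\QBG$ it reflects the fact that the symmetry obtained from $\lon$ exchanges the Bruhat edges and the quantum down-edges whose weighted count defines the degree.

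I would then carry this out concretely: fixing a reduced alcove-path realization of $t(\lon\lambda)$, I would describe the induced bijection $\QB(w; t(\lon\lambda)) \leftrightarrow \QLS(\lambda)$ and verify term by term that a path of weight $\mu$ and degree $d$ maps to one of weight $\lon\mu$ and complementary degree, so that applying $\lon\circ\overline{(\cdot)}$ to $C_{w}^{t(\lon\lambda)}$ reproduces $\gch_{\lon w\lon}\QLS(\lambda)$. The main obstacle will be the bookkeeping of the degree under this longest-element symmetry: one must show that the total degrees of a path and of its image are exchanged uniformly, up to a global shift absorbed by the normalization coming from the translation $t(\lon\lambda)$, so that the correspondence globally realizes $q \mapsto q^{-1}$. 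This rests on a careful analysis of how $\lon$ reverses shortest directed paths in $\QBG$ and of the behavior of the associated coweight heights, which is where the technical heart of the argument lies.
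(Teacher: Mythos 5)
Your reduction is exactly the paper's: by the Feigin--Makedonskyi theorem \cite{FM} and by \cite{NNS}, the statement is equivalent to the combinatorial identity $\lon\bigl(\overline{C_{w}^{t(\lon \lambda)}}\bigr) = \gch_{\lon w \lon}\QLS(\lambda)$, and the paper, like you, proves this by constructing a weight- and degree-compatible bijection $\Xi_w : \QB(w; t(\lon\lambda)) \rightarrow \QLS(\lambda)$ (generalizing the $w=e$ bijection of \cite{LNSSS2}) combined with a $\lon$-symmetry, namely the Lusztig involution $T$ of Lemma~\ref{lem:luszting_involution}. So the architecture of your plan coincides with the paper's proof of Theorems~\ref{thm:graded_character} and~\ref{thm:eq_character}.

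However, the degree bookkeeping you describe is wrong in two concrete ways, and the ``term by term'' verification you propose would fail if attempted as stated. First, nothing is complemented and there is no global shift: the paper's $\Xi_w$ preserves weights on the nose and satisfies $\deg(\qwt(p_J)) = \Dg^{w\lon}(\Xi_w(p_J))$ (Proposition~\ref{qls_qb}), an equality of nonnegative integers; composing with $T$, which satisfies $\wt(T(\eta)) = \lon\wt(\eta)$ and $\Dg_{\lon w\lon}(T(\eta)) = \Dg^{w\lon}(\eta)$, yields a bijection under which $\deg(\qwt)$ equals $\Dg_{\lon w\lon}$ exactly. The involution $q \mapsto q^{-1}$ enters only because $C_w^{t(\lon\lambda)}$ is defined with $q^{+\deg(\qwt)}$ while $\gch_{\lon w\lon}\QLS(\lambda)$ is defined with $q^{-\Dg_{\lon w\lon}}$; it does not ``encode'' any complementation of the statistic, and no normalization coming from $t(\lon\lambda)$ is needed. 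Second, the $\lon$-symmetry of the quantum Bruhat graph (Lemma~\ref{involution}) sends an edge $x \xrightarrow{\beta} y$ to the reversed edge $y\lon \xrightarrow{-\lon\beta} x\lon$ of the \emph{same} type---Bruhat to Bruhat, quantum to quantum. It does not exchange Bruhat edges with quantum edges; if it did, the statistic $\qwt$, which counts only quantum edges, could never match the QLS degree under the correspondence. Correct these two points, and note that the genuinely hard part, which your sketch leaves implicit, is the construction of $\Xi_w$ itself for arbitrary $w$: the choice of a reduced expression for $t(\lon\lambda)$ adapted to a reflection order on $\Delta^+$, and the injectivity and surjectivity arguments resting on the label-increasing shortest-path statements of Lemma~\ref{8.5}; with those in place, your plan becomes the paper's proof.
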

\noindent
Here, $\overline{\ \cdot \textcolor{white}{i}}$ denotes the 
involution on $\mathbb{Q}(q)[P]$ given by:
$q \mapsto q^{-1}$,
$\overline{e^\mu} =e^\mu$ for $\mu \in P$,
and the $\mathbb{Q}(q)$-linear Weyl group action on $\mathbb{Q}(q)[P]$ is given by: $w \cdot e^{\mu} \eqdef e^{w\mu}$ for $w\in W$ and $\mu \in P$,
where $P$ is the weight lattice for $\Fg$.

In the case $w =\id$,
Lenart-Naito-Sagaki-Schilling-Shimozono
\cite{LNSSS2} constructed a bijection
$\Xi : \QB(e; t(\lon \lambda)) \rightarrow \QLS(\lambda)$
between quantum alcove paths and $\QLS$ paths
in order to prove that the graded character $\gch_{e}\QLS(\lambda)$ of $\QLS(\lambda)$
is identical to the specialization $E_{\lon \lambda}(q, 0)$ at $t=0$
of the nonsymmetric Macdonald polynomial $E_{\lon \lambda}(q, t)$.
In this paper, we generalize their construction to an arbitrary
$w \in W$, and prove Theorem \ref{intro:main_theorem}
by means of the constructed bijection.

This paper is organized as follows.
In Section 2, we recall some basic facts about the quantum Bruhat graph.
In Section 3,  we review the definitions of quantum alcove paths and
their graded characters. 
In Section 4, we recall the definition of QLS paths,
and then define some variants of their graded characters.
In Section 5, we give a bijection between the two sets $\QB(w; t(\lon \lambda))$
and $\QLS(\lambda)$ that preserves weights and degrees.
Using this bijection, we finally prove that $\lon (\overline{C_w^{t(\lon \lambda)}}) = \gch_{\lon w \lon}\QLS(\lambda)$,
and hence Theorem \ref{intro:main_theorem}.

\section{(Parabolic) Quantum Bruhat Graph}
Let $\mathfrak{g}$ be a finite-dimensional simple Lie algebra over $\mathbb{C}$,
$I$ the vertex set for the Dynkin diagram of  $\mathfrak{g}$,
and
$\{ \alpha_i \}_{i \in I }$
(resp., $\{ {\alpha}^{\lor}_i \}_{i \in I }$)
 the set of simple roots (resp., coroots) of  $\mathfrak{g}$.
Then
$\mathfrak{h} = \bigoplus_{i \in I}\mathbb{C}\alpha^\lor_i$ is a Cartan subalgebra of  $\mathfrak{g}$,
with
$\mathfrak{h}^* = \bigoplus_{i \in I}\mathbb{C}\alpha_i$ the dual space of $\mathfrak{h}$,
and $\mathfrak{h}_\mathbb{R}^* = \bigoplus_{i \in I}\mathbb{R}\alpha_i$ its real form;
the canonical pairing between  $\mathfrak{h}$ and  $\mathfrak{h}^*$ is denoted by
$\langle \cdot, \cdot \rangle : \mathfrak{h}^* \times \mathfrak{h} \rightarrow \mathbb{C}$.
Let $Q = \sum_{i \in I}\mathbb{Z}\alpha_i  
\subset \mathfrak{h}_\mathbb{R}^*$ denote 
the root lattice,
$Q^\lor = \sum_{i \in I}\mathbb{Z}\alpha_i^\lor  
\subset \mathfrak{h}_\mathbb{R}$ 
the coroot lattice,
and 
$P = \sum_{i \in I}\mathbb{Z}\varpi_i \subset \mathfrak{h}_\mathbb{R}^*$  the weight lattice of $\mathfrak{g}$,
where the $\varpi_i$, $i \in I$, are the fundamental weights for $\mathfrak{g}$,
i.e., 
$\langle \varpi_i ,\alpha_j^\lor \rangle = \delta_{i j}$
for $i, j \in I$;
we set $P^+ \eqdef \sum_{i \in I} \BZ_{\geq 0} \varpi_i$, and call an element $\lambda$ of $P^+$ a dominant (integral) weight.
Let us denote by $\Delta$ the set of roots,
and by $\Delta^{+}$ (resp., $\Delta^{-}$) the set of  positive (resp., negative) roots.
Also, let $W \eqdef \langle s_i \ | \ i \in I \rangle$
be the Weyl group of $\mathfrak{g}$,
where
$s_i $, $i \in I$, are the simple reflections acting on $\mathfrak{h}^*$ and on $\mathfrak{h}$
as follows:
\begin{align*}
s_i \nu = \nu - \langle  \nu , \alpha^\lor_i  \rangle \alpha_i, & \ \ \
\nu \in \mathfrak{h}^*,\\
s_i h = h - \langle \alpha_i , h \rangle \alpha^\lor_i,
& \ \ \
h \in \mathfrak{h};
\end{align*}
we denote the identity element and the longest element of $W$ by $e$ and $\lon$, respectively.
If $\alpha \in \Delta$ is written as $\alpha = w \alpha_i$ for 
$w\in W$ and $i \in I$, 
then its coroot $\alpha^\lor$ is $w \alpha^\lor_i$;
we often identify $s_\alpha$ with $s_{\alpha^\lor}$.
For  $u \in W$,
the length of $u$ is denoted by $\ell(u)$,
 which coincides with
 the cardinality of the set
$\Delta^+ \cap u^{-1}\Delta^-$.

\begin{sdfn}[\normalfont{\cite[Definition 6.1]{BFP}}]\label{QBG}
The quantum Bruhat graph, denoted by $\QBG (W)$, is the directed graph with vertex set $W$ whose directed edges are 
 labeled by positive roots as follows.
For $u,v \in W$, and $\beta \in \Delta^+$, 
an arrow $u \xrightarrow{\beta} v $ is a directed edge of $\QBG (W)$
if the following hold{\rm:}

\begin{enu}
\item
$v=u s_\beta$, and

\item
either
(2a):
$\ell(v)=\ell(u)+1$ or
(2b):
$\ell(v)=\ell(u) - 2\langle \rho, \beta^\lor \rangle +1$, 
\end{enu}
where $\rho \eqdef \frac{1}{2} \sum_{\alpha \in \Delta^+}{\alpha}$.
A directed edge satisfying (2a) 
(resp., (2b))
is called a Bruhat (resp., quantum) edge.
\end{sdfn}

\begin{srem}
The quantum Bruhat graph defined above is a ``right-hand'' version, while 
the one defined in \cite{BFP} is a ``left-hand'' version.
Note that results of \cite{BFP} used in this paper (such as Proposition \ref{shellability}) are unaffected by this difference
(cf. \cite{Po}).
\end{srem}

For a directed edge $u \xrightarrow{\beta} v $ of $\QBG (W)$,
we set
\begin{equation*}
\wt (u \rightarrow v)
\eqdef
\left\{
\begin{array}{ll}
      0 & \mbox{if} \ u \xrightarrow{\beta} v \mbox{ is a Bruhat edge}, \\
      \beta^{\lor} &  \mbox{if} \ u \xrightarrow{\beta} v \mbox{ is a quantum edge}.
\end{array}
\right.
\end{equation*}
Also, for $u,v \in W$, we take a shortest directed path
$u=x_0 \xrightarrow{\gamma_1} x_1 \xrightarrow{\gamma_2}\cdots \xrightarrow{\gamma_r} x_r=v$ in $\QBG (W)$,
and set
\begin{equation*}
\wt (u\Rightarrow v) \eqdef \wt (x_0 \rightarrow x_1)+\cdots +\wt (x_{r-1} \rightarrow x_r) \in Q^\lor;
\end{equation*}
we know from \cite[Lemma 1\,(2),\,(3)]{Po} that
this definition does not depend on the choice of a shortest directed path from $u$ to $v$ in $\QBG (W)$.
For a dominant weight $\lambda \in P^+$, we set $\wt_\lambda (u \Rightarrow v ) \eqdef  \pair{\lambda}{\wt (u\Rightarrow v)}$,
and call it the $\lambda$-weight of a directed path from $u$ to $v$ in $\QBG (W)$.

\begin{slem}[\normalfont{\cite[Lemma 2.1.3]{NNS}}]\label{involution}
If $x \xrightarrow{\beta} y$ is a Bruhat $($resp., quantum$)$ edge of $\QBG (W)$, then $y\lon \xrightarrow{- \lon \beta} x \lon$ is also a Bruhat $($resp., quantum$)$ edge of $\QBG (W)$.
\end{slem}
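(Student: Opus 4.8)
The plan is to exploit that right multiplication by $\lon$ is an order-reversing involution on $W$, together with the standard identities $\lon=\lon^{-1}$, $\lon\Delta^{+}=\Delta^{-}$, and $\lon\rho=-\rho$. Write $\gamma \eqdef -\lon\beta$. Since $\lon$ sends positive roots to negative roots, we have $\gamma\in\Delta^{+}$, so the label $-\lon\beta$ is indeed a valid (positive) root, as required for an edge of $\QBG(W)$.

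First I would check condition (1) of Definition \ref{QBG} for the candidate edge, namely that $x\lon = (y\lon)s_\gamma$. Because a reflection depends only on its reflecting hyperplane, $s_\gamma = s_{-\lon\beta} = s_{\lon\beta} = \lon s_\beta \lon$ (using $\lon^{-1}=\lon$). Hence $(y\lon)s_\gamma = y\lon\cdot\lon s_\beta\lon = y s_\beta \lon$, and since $y=x s_\beta$ gives $y s_\beta = x$, we obtain $(y\lon)s_\gamma = x\lon$, as desired.

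Next I would handle the length condition (2) using the identity $\ell(w\lon)=\ell(\lon)-\ell(w)$ for all $w\in W$. This gives $\ell(x\lon)-\ell(y\lon)=\ell(y)-\ell(x)$. In the Bruhat case $\ell(y)=\ell(x)+1$, this difference equals $1$, so $\ell(x\lon)=\ell(y\lon)+1$ and the new edge is a Bruhat edge. In the quantum case $\ell(y)=\ell(x)-2\pair{\rho}{\beta^\lor}+1$, the difference equals $-2\pair{\rho}{\beta^\lor}+1$; to recognize this as condition (2b) for the edge labelled $\gamma$, it remains only to verify $\pair{\rho}{\gamma^\lor}=\pair{\rho}{\beta^\lor}$.

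The single genuinely computational point, and the place where one must be careful with signs, is this last identity. Since coroots transform like roots, $\gamma^\lor=(-\lon\beta)^\lor=-\lon\beta^\lor$; then by $W$-invariance of the pairing, $\lon^{-1}=\lon$, and $\lon\rho=-\rho$, we get $\pair{\rho}{\gamma^\lor}=-\pair{\rho}{\lon\beta^\lor}=-\pair{\lon\rho}{\beta^\lor}=-\pair{-\rho}{\beta^\lor}=\pair{\rho}{\beta^\lor}$. This closes the quantum case and completes the argument. I expect no separate verification of a converse to be needed, since the construction $w\mapsto w\lon$ is an involution and the two edge types are preserved symmetrically.
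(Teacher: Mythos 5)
Your proof is correct and follows essentially the same route as the paper: the paper's proof cites exactly the two identities you verify, namely $\ell(y)-\ell(x)=\ell(x\lon)-\ell(y\lon)$ and $\langle \rho, -\lon\beta^\lor\rangle = \langle\rho,\beta^\lor\rangle$. You merely spell out the details the paper leaves implicit (positivity of the label $-\lon\beta$, the reflection identity $s_{-\lon\beta}=\lon s_\beta\lon$, and the sign computation via $\lon\rho=-\rho$), all of which are accurate.
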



Let $w \in W$. We take (and fix) reduced expressions $w = s_{i_{1}}\cdots s_{i_{p}}$ and $\lon w^{-1} =s_{i_{-q}}\cdots s_{i_{0}} $;
note that
$		\lon =s_{i_{-q}}\cdots s_{i_{0}}s_{i_{1}}\cdots s_{i{p}}$
is also a reduced expression for the longest element $\lon$.
Now, we set
	\begin{equation}\label{inversion_root}
		\beta_{k} \eqdef
	s_{i_{p}}\cdots s_{i_{k+1}} \alpha_{i_{k}},
\ \ \ 
-q \leq k \leq p;
	\end{equation}
we have $\{\beta_{-q}, \ldots , \beta_{0}, \ldots , \beta_{p} \}=\Delta^+$.
Then we define a total order
$\prec$ on $\Delta^+$ by:
	\begin{equation}\label{reflectionorder}
		\beta_{-q} \prec \beta_{-q+1} \prec \cdots \prec \beta_{p};
	\end{equation}
note that this total order is a reflection order;
see Remark \ref{rem:weak_reflection_order}.

\begin{sprop}[{\cite[Theorem 6.4]{BFP}}] \label{shellability}
	Let $u, v\in W$.

\begin{enu}
\item
There exists a unique directed path from $u$ to $v$ in $\QBG (W)$ 
for which the edge labels are strictly increasing {\rm(}resp., strictly decreasing{\rm)} in the total order $\prec$ above.

\item
The unique label-increasing {\rm(}resp., label-decreasing{\rm)} path
\begin{equation*}
u = u_0
\xrightarrow{\gamma_1}
u_1
\xrightarrow{\gamma_2}
\cdots
\xrightarrow{\gamma_r}
u_r=v
\end{equation*}
 from $u$ to $v$ in $\QBG (W)$
is a shortest directed path from $u$ to $v$.
Moreover, it is lexicographically minimal {\rm(}resp., lexicographically maximal{\rm)} among all shortest directed paths from $u$ to $v${\rm;}
that is, for an arbitrary shortest directed path
\begin{equation*}
u = u'_0
\xrightarrow{\gamma'_1}
u'_1
\xrightarrow{\gamma'_2}
\cdots
\xrightarrow{\gamma'_r}
u'_r=v
\end{equation*}
from $u$ to $v$ in $\QBG (W)$, there exists some $1 \leq j \leq r$
such that $\gamma_j \prec \gamma'_j$ {\rm(}resp., $\gamma_j \succ \gamma'_j${\rm)},
and $\gamma_k = \gamma'_k$ for $1 \leq k \leq j-1$.
\end{enu}
	\end{sprop}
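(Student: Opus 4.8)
The plan is to establish the assertion for label-\emph{increasing} paths with respect to an arbitrary reflection order, and to deduce the label-\emph{decreasing} assertions from it. Indeed, a directed path is strictly decreasing in $\prec$ exactly when it is strictly increasing in the reverse order $\succ$, and $\succ$ is again a reflection order of the form \eqref{reflectionorder} (it is the order attached to the reverse of the chosen reduced word of $\lon$); so proving the increasing half for all reflection orders yields the decreasing half by specialization. Lemma \ref{involution} supplies a further symmetry, namely the edge-reversing, type-preserving involution $x\mapsto x\lon$, $\beta\mapsto-\lon\beta$, which transports the statement for $(u,v)$ to the one for $(v\lon,u\lon)$; I would keep it in reserve to normalize the endpoints. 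The two structural inputs I would rely on are that $\prec$ restricts to a reflection order on every rank-two sub-root-system, and that the edges of $\QBG(W)$ incident to a fixed vertex obey the local ``diamond'' relations of the quantum Bruhat graph.

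For the increasing statement I would induct on the distance $\ell(u\Rightarrow v)$, the length of a shortest directed path from $u$ to $v$ (finite because $\QBG(W)$ is strongly connected, and meaningful by \cite{Po}); the base case $u=v$ is the empty path. The engine of the induction is a \emph{first-step lemma}: for $u\neq v$, among the positive roots $\beta$ for which $u\xrightarrow{\beta}us_\beta$ is an edge with $\ell(us_\beta\Rightarrow v)=\ell(u\Rightarrow v)-1$ there is a $\prec$-minimal one $\gamma$, and (i) this set is nonempty, (ii) the increasing path from $us_\gamma$ to $v$ provided by the inductive hypothesis uses only labels strictly $\succ\gamma$, and (iii) every increasing path from $u$ to $v$ necessarily begins with the edge labelled $\gamma$ and is therefore shortest. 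Granting the first-step lemma, existence, uniqueness and shortestness of the increasing path all follow simultaneously by induction.

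Lexicographic minimality comes out of the same induction: for any shortest path $u\xrightarrow{\gamma'_1}u'_1\to\cdots\to v$, its first label $\gamma'_1$ lies in the set defining $\gamma$, so $\gamma\preceq\gamma'_1$; if $\gamma\prec\gamma'_1$ we are done, and if $\gamma=\gamma'_1$ we compare the remaining tails, which are shortest paths from $us_\gamma$ to $v$, by the inductive hypothesis. The main obstacle is the first-step lemma itself — in essence the statement that the reflection-order labelling is an EL-labelling of $\QBG(W)$. I would attack it by reducing to rank two: since $\prec$ induces a reflection order, hence a convex order, on the rank-two subsystem generated by the roots in play, the assertion localizes to a finite check in the dihedral quantum Bruhat graph, and the global claim is then reassembled from these local checks via the diamond relations and the symmetry of Lemma \ref{involution}. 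The genuinely delicate point is the presence of quantum edges: their length jump $-2\langle\rho,\beta^\lor\rangle+1$ makes the distance $\ell(\cdot\Rightarrow v)$ far less transparent than ordinary Bruhat length, so that even verifying that the defining set in the first-step lemma is nonempty, and that it interacts correctly with $\prec$, requires the full force of the convexity properties of the quantum Bruhat graph.
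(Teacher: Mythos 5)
First, a point of orientation: the paper contains no proof of Proposition \ref{shellability} at all; it is quoted verbatim from \cite[Theorem 6.4]{BFP}. So your attempt has to be measured against the argument in that reference, which runs quite differently from your sketch: BFP introduce the mixed (quantum) Bruhat operators $Q_\beta$, verify Yang--Baxter-type relations for them on rank-two subsystems, use the fact that any two reflection orders are connected by reversals of rank-two strings to conclude that the product $\prod_{\beta}(1+Q_\beta)$ taken along a reflection order is independent of the order, and then extract existence, uniqueness, shortestness and lexicographic minimality of the label-increasing path from that single operator identity.

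Your outer scaffolding is sound: passing between the increasing and decreasing statements by reversing the order (the reverse of a reflection order is again a reflection order), and deducing lexicographic minimality once a ``first-step lemma'' is in hand. The genuine gap is that the first-step lemma is never proved, and it cannot be reached by the induction you set up. Your induction is on $\ell(u \Rightarrow v)$, so the inductive hypothesis only concerns increasing paths issuing from vertices strictly closer to $v$ than $u$. But your claims (ii) and (iii) quantify over paths that escape this hypothesis: an increasing path from $u$ to $v$ could a priori begin with an edge $u \xrightarrow{\beta} u s_\beta$ with $\ell(u s_\beta \Rightarrow v) = \ell(u \Rightarrow v)+1$, and its tail then starts at a vertex \emph{farther} from $v$, about which the induction says nothing; likewise claim (ii), that the increasing path from $u s_\gamma$ to $v$ uses only labels $\succ \gamma$, is not something the inductive hypothesis controls. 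Ruling out such ``wandering'' increasing paths is exactly the assertion that the labelling by $\prec$ is an EL-labelling of $\QBG(W)$, i.e., it is the theorem itself. Declaring that this ``localizes to a finite check in the dihedral quantum Bruhat graph'' and is ``reassembled via the diamond relations'' names the difficulty without resolving it: the quantum edges, with their length drop $2\langle \rho, \beta^\lor \rangle - 1$, are precisely what make the local-to-global step nontrivial, and the only available mechanism for it is the order-independence of the BFP operator product (or an equivalent global counting identity). In effect, completing your sketch would require reconstructing \cite[Section 6]{BFP}; as written, the core step is assumed rather than proven.
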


For a subset $S \subset I$,
we set
$W_S \eqdef \langle s_i \ | \ i \in S \rangle$;
notice that $S$ may be the empty set $\emptyset$.
We denote the longest element of $W_S$ by $\lons$.
Also, we set $\Delta_S \eqdef Q_S \cap \Delta$,
where $Q_S \eqdef \sum_{i \in S} \mathbb{Z}\alpha_i$, 
and $\Delta_S^+ \eqdef \Delta_S \cap \Delta^+ $, 
$\Delta_S^- \eqdef \Delta_S \cap \Delta^- $.
%
For $w\in W$, we denote  by 
$\lfloor w \rfloor$ the minimal-length coset representative for the coset $w W_S$ in $W / W_S$, and
for a subset $X \subset W$, we set $\lfloor X \rfloor \eqdef \{ \lfloor w \rfloor \ | \ w \in X \} \subset W^S$,
where $W^S \eqdef \lfloor W \rfloor$ is the set of minimal-length coset representatives for the cosets in $W / W_S$.
%
\begin{sdfn}[{\cite[Section 4.3]{LNSSS1}}]\label{QBGS}
The parabolic quantum Bruhat graph, denoted by $\QBG (W^S)$, is the directed graph with vertex set $W^S$ whose directed edges
are labeled by 
positive roots in
$ \Delta^+ \setminus \Delta^+_S$ as follows.
For $u,v \in W^S$, 
and $\beta \in \Delta^+ \setminus \Delta^+_S$,
an arrow $u \xrightarrow{\beta} v $ is a directed edge of $\QBG (W^S)$
if the following hold{\rm :}
\begin{enu}
\item
$v=\lfloor u s_\beta \rfloor$ and 
\item
either
{\rm (2a):} 
$\ell(v)=\ell(u)+1$ or
{\rm (2b):} $\ell(v)=\ell(u) - 2\langle \rho - \rho_S, \beta^\lor \rangle +1$, 
\end{enu}
where $\rho_S =\frac{1}{2}\sum_{\alpha \in \Delta^+_S}\alpha$.
A directed edge satisfying {\rm (2a)} (resp., {\rm (2b)}) is called a Bruhat (resp., quantum) edge.
\end{sdfn}

For a directed edge $u \xrightarrow{\beta} v $ in $\QBG (W^S)$, we set
\begin{equation*}
\wt^S (u \rightarrow v)
\eqdef
\left\{
\begin{array}{ll}
      0 & \mbox{if} \ u \xrightarrow{\beta} v \mbox{ is a Bruhat edge}, \\
      \beta^{\lor} &  \mbox{if} \ u \xrightarrow{\beta} v \mbox{ is a quantum edge}.
\end{array}
\right.
\end{equation*}
Also, for $u, v \in W^S$, we take a shortest directed path
$\bp:u=x_0 \xrightarrow{\gamma_1} x_1 \xrightarrow{\gamma_2}\cdots \xrightarrow{\gamma_r} x_r=v$ in $\QBG (W^S)$
(such a directed path always exists by \cite[Lemma 6.12]{LNSSS1}),
and set
\begin{equation*}
\wt^S (\bp) \eqdef \wt^S (x_0 \rightarrow x_1)+\cdots +\wt^S (x_{r-1} \rightarrow x_r) \in Q^\lor;
\end{equation*}
we know from \cite[Proposition 8.1]{LNSSS1} that if $\bq$ is another shortest directed path from $u$ to $v$ in $\QBG (W^S)$,
then $\wt^S(\bp) - \wt^S(\bq) \in Q_S^\lor \eqdef \sum_{i \in S} \BZ_{\geq 0} \alpha^\lor_i$.

Now, for a dominant weight
$\lambda \in P^+$,
we set 
\begin{equation*}
S = S_\lambda \eqdef \{ i \in I \ | \  \langle \lambda , \alpha^{\lor}_i \rangle =0 \}
.
\end{equation*}
By the remark just above,
for $u,v \in W^S$,
the value $\pair{\lambda}{\wt^S(\bp)}$ does not depend on the choice of a shortest directed path $\bp$
from $u$ to $v$ in $\QBG (W^S)$;
this value is called the $\lambda$-weight of a directed path from $u$ to $v$ in $\QBG (W^S)$.
Moreover,
we know from \cite[Lemma 7.2]{LNSSS2}
that the value $\pair{\lambda}{\wt^S(\bp)}$ is equal to the value 
$\wt_\lambda (x \Rightarrow y) =  \pair{\lambda}{\wt (x \Rightarrow y) }$ for all $x \in uW_S$ and $y \in vW_S$.
In view of this fact, for $u, v \in W^S$, we also write $\wt_\lambda (u \Rightarrow v)$ for the value $\pair{\lambda}{\wt^S(\bp)}$
by abuse of notation;
hence in this notation, we have
\begin{equation}\label{eq:QBG_and_parabolic_QBG_weight}
\wt_\lambda(x\Rightarrow y)= \wt_\lambda(\lfloor x \rfloor \Rightarrow \lfloor y \rfloor)
\end{equation}
for all $x, y \in W$.

\begin{sdfn}[{\cite[Section 3.2]{LNSSS2}}]
Let $\lambda \in P^+$ be a dominant weight and 
$\sigma \in \mathbb{Q}\cap [0,1]$,
and set $S=S_\lambda$.
We denote by
$\QBG_{\sigma \lambda} (W)$ (resp., $\QBG_{\sigma \lambda} (W^S)$ ) 
 the subgraph of $\QBG (W)$ (resp., $\QBG (W^S)$)
with the same vertex set but having only the edges:
$u \xrightarrow{\beta} v$ with $\sigma \langle \lambda, \beta^{\lor}  \rangle \in \mathbb{Z}$.
\end{sdfn}

\begin{slem}[{\cite[Lemma 6.1]{LNSSS2}}]\label{8.1}
Let $\sigma \in \mathbb{Q}\cap [0,1]${\rm;} notice that $\sigma$ may be $1$.
If $u \xrightarrow{\beta} v$ is a directed edge of $\QBG_{\sigma \lambda} (W)$,
then
there exists a  directed path from $\lfloor u \rfloor$ to $\lfloor v \rfloor$ in $\QBG_{\sigma \lambda} (W^S)$.
\end{slem}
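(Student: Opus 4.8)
The plan is to project the single edge $u \xrightarrow{\beta} v$ directly down to $\QBG(W^S)$ and then check that the resulting short path survives in the $\sigma\lambda$-restricted subgraph. First I would set up the coset bookkeeping: write $u = \lfloor u \rfloor\, u_S$ with $u_S \in W_S$. Since
$v = u s_\beta = \lfloor u \rfloor\, s_{u_S\beta}\, u_S$ (using $u_S s_\beta u_S^{-1} = s_{u_S\beta}$), we get $\lfloor v \rfloor = \lfloor \lfloor u \rfloor\, s_{u_S\beta}\rfloor$, so at the level of cosets $\lfloor u \rfloor$ and $\lfloor v \rfloor$ differ by the single reflection $s_{u_S\beta}$. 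I would then split into two cases. If $\beta \in \Delta^+_S$, then $s_\beta \in W_S$, hence $v \in uW_S$ and $\lfloor u \rfloor = \lfloor v \rfloor$; the empty path does the job (and note $\pair{\lambda}{\beta^\lor}=0$ here, since $\beta^\lor \in \sum_{i\in S}\BZ\alpha_i^\lor$ and $\pair{\lambda}{\alpha_i^\lor}=0$ for $i\in S=S_\lambda$).

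For the main case $\beta \notin \Delta^+_S$, let $\gamma \in \Delta^+ \setminus \Delta^+_S$ be the positive representative of $\pm u_S\beta$; it lies outside $\Delta_S$ because $u_S \in W_S$ preserves $\Delta \setminus \Delta_S$ and $\beta \notin \Delta_S$. Here I would invoke the parabolic projection property of the quantum Bruhat graph from \cite{LNSSS1}: an edge of $\QBG(W)$ whose label lies outside $\Delta^+_S$ descends under $\lfloor \cdot \rfloor$ to a single edge $\lfloor u \rfloor \xrightarrow{\gamma} \lfloor v \rfloor$ of $\QBG(W^S)$. Concretely, this amounts to checking the length condition (2a)/(2b) of Definition \ref{QBGS} for the pair $\lfloor u \rfloor,\ \lfloor v \rfloor = \lfloor \lfloor u \rfloor s_\gamma\rfloor$, given the condition (2a)/(2b) of Definition \ref{QBG} for $u \xrightarrow{\beta} v$. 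Note that no collapse can occur in this case, since $\lfloor u\rfloor=\lfloor v\rfloor$ would force $u^{-1}v=s_\beta\in W_S$, i.e.\ $\beta\in\Delta_S$.

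It remains to see that this edge lies in $\QBG_{\sigma\lambda}(W^S)$, i.e.\ that $\sigma\pair{\lambda}{\gamma^\lor}\in\BZ$. The key point is that $W_S$ fixes $\lambda$, so
\[
\pair{\lambda}{\gamma^\lor} = \pm \pair{\lambda}{(u_S\beta)^\lor} = \pm \pair{u_S^{-1}\lambda}{\beta^\lor} = \pm \pair{\lambda}{\beta^\lor},
\]
and hence $\sigma\pair{\lambda}{\gamma^\lor} = \pm\,\sigma\pair{\lambda}{\beta^\lor} \in \BZ$ because $u \xrightarrow{\beta} v$ is an edge of $\QBG_{\sigma\lambda}(W)$. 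Thus $\lfloor u \rfloor \xrightarrow{\gamma} \lfloor v \rfloor$ is the desired path (of length one) in $\QBG_{\sigma\lambda}(W^S)$.

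The main obstacle is the projection step itself: establishing that the descended pair is genuinely an edge of $\QBG(W^S)$ (equivalently, that the parabolic length condition holds), rather than merely two cosets joined by a longer directed path. This is the real combinatorial-geometric content, and I would import it from \cite{LNSSS1}. Once it is granted, the integrality transfers essentially for free, since $\lambda$ is $W_S$-invariant and the descended label $\gamma$ is, up to sign, the $W_S$-conjugate $u_S\beta$. In the unlikely event that some configuration forced the descent to be a path of length $\geq 2$, one would instead have to verify $\sigma\pair{\lambda}{\delta^\lor}\in\BZ$ for each of its labels $\delta$ — a finer, order-theoretic argument using the reflection order and the shellability of Proposition \ref{shellability} — but I expect the single-edge descent to hold throughout, so the computation above suffices.
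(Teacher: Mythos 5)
The crux of your argument --- the ``single-edge descent'' you propose to import from \cite{LNSSS1} --- is false, and this is a genuine gap, not a technicality. Take $\mathfrak{g}$ of type $A_2$ and $\lambda=\varpi_1$, so that $S=S_\lambda=\{2\}$, $W_S=\{e,s_2\}$, $W^S=\{e,\,s_1,\,s_2s_1\}$, and take $\sigma=1$ (explicitly allowed by the lemma). Then $u=s_2 \xrightarrow{\alpha_1} v=s_2s_1$ is a Bruhat edge of $\QBG(W)$ lying in $\QBG_{\sigma\lambda}(W)$, with $\beta=\alpha_1\notin\Delta^+_S$. Here $\lfloor u\rfloor=e$ and $u_S=s_2$, so your descended label is $\gamma=s_2\alpha_1=\alpha_1+\alpha_2$, and $\lfloor v\rfloor=s_2s_1$. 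But $e\xrightarrow{\alpha_1+\alpha_2}s_2s_1$ is \emph{not} an edge of $\QBG(W^S)$: condition (1) of Definition \ref{QBGS} holds, since $\lfloor s_{\alpha_1+\alpha_2}\rfloor=s_2s_1$, but $\ell(s_2s_1)=2\neq \ell(e)+1$, while the quantum condition would require $\ell(s_2s_1)=\ell(e)-2\pair{\rho-\rho_S}{\alpha_1^\lor+\alpha_2^\lor}+1=-2$. In fact no edge of $\QBG(W^S)$ joins $e$ to $s_2s_1$ at all (the only edge leaving $e$ is $e\xrightarrow{\alpha_1}s_1$); the shortest directed path is $e\xrightarrow{\alpha_1}s_1\xrightarrow{\alpha_1+\alpha_2}s_2s_1$, of length two. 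So the situation you set aside as ``unlikely'' is the typical one whenever $u_S\neq e$, and your integrality computation for the single label $\gamma$ does not prove the lemma.

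What is missing is exactly the substance of the statement: one must exhibit \emph{some} directed path from $\lfloor u\rfloor$ to $\lfloor v\rfloor$ each of whose labels $\delta$ satisfies $\sigma\pair{\lambda}{\delta^\lor}\in\BZ$ (in the example above both labels happen to satisfy $\pair{\lambda}{\delta^\lor}=\pair{\lambda}{\beta^\lor}$, but this needs proof in general, and the path is not of length one). Be aware also that \cite{LNSSS1} goes in the opposite direction: its results \emph{lift} edges of $\QBG(W^S)$ to $\QBG(W)$ (tilted Bruhat orders and label-increasing lifts, cf.\ Lemma \ref{8.5}); the projection statement you want is not there --- it is essentially \cite[Lemma 6.1]{LNSSS2}, i.e.\ the statement to be proved, whose proof runs through that lifting machinery rather than through any edge-to-edge descent. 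Note finally that the present paper does not prove this lemma at all; it quotes it from \cite{LNSSS2}, so there is no internal proof to compare against. Your Case 1 ($\beta\in\Delta^+_S$), the coset identity $v=\lfloor u\rfloor s_{u_S\beta}\,u_S$, and the observation that $W_S$ fixes $\lambda$ (so integrality transfers along $W_S$-conjugate labels) are all correct and would survive in a repaired proof; the projection step itself is what still needs an actual argument.
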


Also, for $u,v\in W$, 
let $\ell(u\Rightarrow v)$ denote
the length of a shortest directed path in $\QBG (W)$ from $u$ to $v$.
For $w\in W$,
following \cite{BFP},
we define the $w$-tilted Bruhat order $\leq_{w}$ on $W$ as follows:
for $u,v \in W$,
	\begin{equation*}
		u \leq_{w} v
		\ardef
		\ell(v\Rightarrow w) =\ell(v\Rightarrow u)+\ell(u\Rightarrow w);
	\end{equation*} 
the $w$-tilted Bruhat order on $W$ is a partial order
with the unique minimal element $w$.

\begin{slem}[{\cite[Theorem 7.1]{LNSSS1}}, {\cite[Lemma 6.5]{LNSSS2}}]\label{8.5}
Let $u, v\in W^S$, and $w \in W_S$.

\begin{enu}
\item
There exists a unique minimal element in the coset $v W_S$
in the $u w$-tilted Bruhat order $\leq_{u w}$.
We denote it by $\min(v W_S, \leq_{u w})$.

\item
There exists a unique  directed path from some $x \in v W_S$ to $u w$
in $\QBG (W)$ whose edge labels are increasing 
in the 
total order $\prec$ on $\Delta^+$
defined in $(\ref{reflectionorder})$,
and lie in $\Delta^+ \setminus \Delta^+_S$.
This path begins with $\min(v W_S, \leq_{u w})$.

\item
Let
$\sigma \in\mathbb{Q}\cap [0,1]$, 
and
$\lambda \in P^+$  a dominant weight.
If there exists a directed path from $v$ to $u$ in $\QBG_{\sigma \lambda} (W^S)$,
then the directed path in $(2)$ is one in  $\QBG_{\sigma \lambda} (W)$.
\end{enu}
\end{slem}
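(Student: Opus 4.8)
The plan is to derive all three assertions from the shellability Proposition~\ref{shellability} together with the projection Lemma~\ref{8.1}, working throughout with the reflection order $\prec$ of $(\ref{reflectionorder})$ adapted to $uw$ (that is, built as in the construction preceding $(\ref{inversion_root})$ with $uw$ in the role of the distinguished Weyl group element; this is unrelated to the element $w \in W_S$ of the statement). Parts $(1)$ and $(2)$ are best proved simultaneously: I would first produce the label-increasing path of $(2)$ and then read off from it the minimal coset element of $(1)$. Part $(3)$ is then a refinement obtained by tracking the $\sigma\lambda$-integrality condition along this path.

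For $(2)$, since $w \in W_S$ we have $\lfloor uw \rfloor = u$, so $u$ and $v$ are vertices of $\QBG(W^S)$. Applying the shellability property in the parabolic graph (the parabolic counterpart of Proposition~\ref{shellability}) yields a unique label-increasing directed path from $v$ to $u$ in $\QBG(W^S)$, with labels in $\Delta^+ \setminus \Delta^+_S$. I would then lift this path to $\QBG(W)$: the ``Deodhar lift'' produces a label-increasing path in $\QBG(W)$ terminating at $uw$ and starting at some $x \in vW_S$, whose labels again lie in $\Delta^+ \setminus \Delta^+_S$ and coincide with the labels of the parabolic path. Uniqueness follows because any path as in $(2)$ projects under $\lfloor \cdot \rfloor$ to the label-increasing parabolic path from $v$ to $u$, which is unique, while the lift of a parabolic path to $\QBG(W)$ is itself uniquely determined.

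To obtain $(1)$, let $x$ be the starting vertex of the path just built and let $z \in vW_S$ be arbitrary. Inside the coset $vW_S$, on which the edges labeled by roots of $\Delta_S$ form a copy of the (shifted) quantum Bruhat graph of $W_S$, choose a shortest label-increasing path from $z$ to $x$ (realizable within $vW_S$, with labels in $\Delta^+_S$). The crux is to check that concatenating this with the $\Delta^+ \setminus \Delta^+_S$-labeled path $x \Rightarrow uw$ of $(2)$ is again shortest, equivalently the additivity
\[
\ell(z \Rightarrow uw) = \ell(z \Rightarrow x) + \ell(x \Rightarrow uw).
\]
Granting this, $x \leq_{uw} z$ for every $z \in vW_S$, so $x$ is a minimum of the coset in $\leq_{uw}$; since a partial order has at most one minimum, $x = \min(vW_S, \leq_{uw})$, which simultaneously establishes the existence and uniqueness in $(1)$ and identifies the initial vertex of $(2)$. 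I expect this additivity step --- the coset lifting property of the quantum Bruhat graph relative to $W_S$ and the tilted order $\leq_{uw}$, controlling the interaction between the $\Delta_S$-part and the $\Delta^+ \setminus \Delta^+_S$-part of $\prec$ --- to be the main obstacle, as it is the genuine technical heart of the cited results.

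Finally, for $(3)$, suppose there is a directed path from $v$ to $u$ in $\QBG_{\sigma\lambda}(W^S)$. Each $\gamma \in \Delta^+_S$ satisfies $\pair{\lambda}{\gamma^\lor} = 0$ because $S = S_\lambda$, so the condition $\sigma\pair{\lambda}{\gamma^\lor} \in \BZ$ holds automatically on $\Delta_S$-edges, and only the labels of the $\Delta^+ \setminus \Delta^+_S$-path of $(2)$ must be controlled. Using that the lifted path of $(2)$ projects back, via $\lfloor \cdot \rfloor$, to the label-increasing parabolic path from $v$ to $u$ with the same labels, together with Lemma~\ref{8.1} and the weight compatibility $(\ref{eq:QBG_and_parabolic_QBG_weight})$, one transfers $\sigma\lambda$-admissibility between the full and parabolic graphs: the hypothesis forces the label-increasing parabolic path from $v$ to $u$ to lie in $\QBG_{\sigma\lambda}(W^S)$, whence each of its labels, and therefore each label of the lifted path, meets the integrality condition. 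Thus the path of $(2)$ lies in $\QBG_{\sigma\lambda}(W)$, completing the sketch.
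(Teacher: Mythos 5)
The paper itself offers no proof of this lemma: it is imported wholesale from \cite{LNSSS1} (Theorem 7.1) and \cite{LNSSS2} (Lemma 6.5), so your proposal has to stand entirely on its own, and it does not. Every hard step in your sketch is delegated to a black box that is either unavailable in this paper or is essentially the statement being proved. Concretely: (i) the ``parabolic counterpart of Proposition \ref{shellability}'' (unique label-increasing paths in $\QBG (W^S)$) is nowhere established --- Proposition \ref{shellability} is BFP shellability for the full graph only, and the parabolic analogue is itself a nontrivial theorem of the same depth as the lemma at hand; (ii) the ``Deodhar lift'' of an entire label-increasing parabolic path to a label-increasing path in $\QBG (W)$ ending at $uw$ and starting in $vW_S$ \emph{is} assertion (2), so invoking it as a known tool is circular. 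In \cite{LNSSS1} this lift is built edge by edge, by induction, out of the diamond lemmas for $\QBG (W)$ and $\QBG (W^S)$; that inductive construction is the actual content, and nothing in your sketch replaces it.

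For part (1) you concede the decisive point yourself: the additivity $\ell(z \Rightarrow uw) = \ell(z \Rightarrow x) + \ell(x \Rightarrow uw)$ for all $z \in vW_S$ is only ``granted'', and you correctly identify it as the technical heart --- a proof that assumes its own core step is not a proof. (Your subsidiary observation that the $\Delta^+_S$-labeled edges inside the coset $vW_S$ form a shifted copy of $\QBG (W_S)$ is fine, since $\ell(vw') = \ell(v) + \ell(w')$ for $w' \in W_S$ and $\langle \rho - \rho_S, \beta^\lor \rangle = 0$ for $\beta \in \Delta_S$; but this by itself gives no control on shortest paths that leave the coset, which is what the additivity requires.) Finally, in part (3) you assert that the existence of \emph{some} directed path from $v$ to $u$ in $\QBG_{\sigma \lambda}(W^S)$ forces the \emph{label-increasing} parabolic path to lie in $\QBG_{\sigma \lambda}(W^S)$. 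That is again a genuine shellability-type theorem for the subgraph $\QBG_{\sigma \lambda}$, not a formality, and neither this paper's Proposition \ref{shellability} (which has no integrality refinement) nor Lemma \ref{8.1} (which goes in the opposite direction, projecting edges from $\QBG_{\sigma\lambda}(W)$ to $\QBG_{\sigma\lambda}(W^S)$) justifies it. So all three parts of your argument rest on unproven inputs whose difficulty is that of the cited results themselves.
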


\section{Quantum Alcove paths and their graded characters}

In this section,
we recall 
from \cite[Sections 4 and 5]{OS} and \cite[Section 1]{FM}
the definition and some of the properties of
the graded characters of quantum alcove paths.


Let
$\widetilde{ \mathfrak{g}}$ denote the  finite-dimensional simple Lie algebra
whose root datum is dual to that of $\mathfrak{g}$; the set of simple roots is
$\{ {\alpha}^{\lor}_i \}_{i \in I } \subset \mathfrak{h}$,
and the set of simple coroots is
$\{ \alpha_i \}_{i \in I } \subset \mathfrak{h}^*$.
We denote the set of roots of $\widetilde{ \mathfrak{g}}$ by $\widetilde{\Delta} = \{ \alpha^\lor \ | \ \alpha \in \Delta \}$,
and 
the set of positive (resp., negative) roots by
$\widetilde{\Delta}^{+}$ (resp., $\widetilde{\Delta}^{-}$).

We consider the untwisted affinization of the root datum of $\widetilde{ \mathfrak{g}}$.
Let us denote
by $\trr$ 
the set of all real roots,
and 
by $\ptrr$
(resp., $\ntrr$)
the set of all positive (resp., negative) real roots.
Then we have
$\trr=
 \{ \alpha^\lor +a \widetilde{\delta} \ | \ \alpha \in \Delta , a \in \mathbb{Z} \}$,
with  $\widetilde{\delta}$ the (primitive) null root.
We set
$\alpha^{\lor}_0 \eqdef \widetilde{\delta} - \varphi^{\lor} $,
where $\varphi \in \Delta$ denotes the highest short root, 
and set
$ I_{\aff}\eqdef I\sqcup \{0 \}$.
Then, $\{ \alpha^{\lor}_i \}_{i\in  I_{\aff}}$ is the set of simple roots.
Also, for 
$\beta \in \mathfrak{h}\oplus \mathbb{C}\widetilde{\delta}$,
we define ${\deg}(\beta) \in \mathbb{C} $ and 
$\overline{\beta} \in \mathfrak{h}$
by:
\begin{equation}\label{eq:dfn_deg}
\beta = \overline{\beta} + {\deg}(\beta) \widetilde{\delta}.
\end{equation}

We denote the Weyl group of $\widetilde{ \mathfrak{g}}$ by $\widetilde{W}$;
we identify $\widetilde{W}$ and $W$
through the identification of the simple reflections of the same index $i \in I$.
For $\nu \in \mathfrak{h}^*$,
let $t(\nu)$ denote the translation in $\mathfrak{h}^*$: $ t(\nu) \gamma = \gamma + \nu$ for $\gamma \in \mathfrak{h}^*$.
The corresponding affine Weyl group and the extended affine Weyl group
are defined by
$\widetilde{W}_{\aff}\eqdef t(Q) \rtimes W $ and
$\widetilde{W}_{\ext} \eqdef t(P) \rtimes W $,
respectively.
Also, we define $s_0 : \mathfrak{h}^* \rightarrow \mathfrak{h}^*$ by $\nu \mapsto \nu -( \langle \nu, \varphi^\lor  \rangle -1)\varphi $.
Then, $\widetilde{W}_{\aff}=\langle s_i \ | \ i \in I_{\aff}\rangle$;
note that $s_0 = t(\varphi) s_\varphi $.
The extended affine Weyl group $\widetilde{W}_{\ext}$ acts on
$\mathfrak{h}\oplus \mathbb{C}\widetilde{\delta}$
as linear transformations,
and on 
$\mathfrak{h}^*$ 
as affine transformations:
for $v\in W$, $t(\nu) \in t(P)$,
\begin{eqnarray*}
v t(\nu)( \overline{\beta}+r\widetilde{\delta} )=v\overline{\beta}+(r-\langle \nu, \overline{\beta} \rangle ) \widetilde{\delta},
&
\overline{\beta} \in  \mathfrak{h} , r \in \mathbb{C},
\\
v t(\nu) \gamma = v \nu +v \gamma,
&
\gamma \in \mathfrak{h}^* .
\end{eqnarray*}

An element $u \in \widetilde{W}_{\ext} $ can be written as 
\begin{equation}\label{eq:dfn_wt}
u=t({{\wt}(u)}) \dr (u),
\end{equation}
with ${\wt}(u) \in P$ and $ {\dr}(u) \in W$,
according to the decomposition
$\widetilde{W}_{\ext} = t(P) \rtimes W $.
For $w \in \widetilde{W}_{\ext}$,
we denote the length of $w$ by 
$\ell (w) $,
which equals
$\#
\left(\ptrr
\cap
w^{-1}\ntrr \right)
$.
Also, we set 
$\Omega 
\eqdef
\{ w \in \widetilde{W}_{\ext}  \ | \ \ell(w)=0 \}$.

Let $\mu \in P$, and
denote by $m_{\mu} \in \widetilde{W}_{\ext}$ the shortest element in the coset	$t(\mu)W$.
We take a reduced expression $m_{\mu} = u s_{\ell_{1}}\cdots s_{\ell_{L}}
\in \widetilde{W}_{\ext} = \Omega \ltimes \widetilde{W}_{\aff}$,
where $u \in \Omega$ and $ \ell_1 , \ldots , \ell_L \in  I_{\aff}$.
\begin{srem}[{\cite[Section 2.4]{M}}]
For a dominant weight $\lambda \in P^+$,
we have
\begin{equation}\label{equ:mcr_antidominant}
m_{\lon \lambda} = t(\lon \lambda).
\end{equation}
\end{srem}
\vspace{5mm}
Let $w \in W$.
For each $J = \{ j_{1} < j_{2} < j_{3} < \cdots < j_{r} \} \subset \{1,\ldots,L\}$,
we define an alcove path $p_{J} =
			\left( w m_\mu = z_0, z_{1} , \ldots , z_{r} ; \tilde{\beta}_{j_1} , \ldots , \tilde{\beta}_{j_r} \right)$ as follows: 
we set
$\tilde{\beta}_{k} \eqdef s_{\ell_{L}}\cdots s_{\ell_{k+1}} \alpha^{\lor}_{\ell_{k}} \in \ptrr$ 
for $1 \leq k \leq L$, and set
	\begin{eqnarray*}
		z_{0}&=&w m_{\mu} ,\\
		z_{1}&=&w m_{\mu}  s_{\tilde{\beta}_{j_{1}}},\\
		z_{2}&=&w m_{\mu} s_{\tilde{\beta}_{j_{1}}}s_{\tilde{\beta}_{j_2}},\\
				&\vdots& \\
		z_{r}&=&w m_{\mu} s_{\tilde{\beta}_{j_{1}}}\cdots s_{\tilde{\beta}_{j_r}}.
	\end{eqnarray*}
Also, following \cite[Section 3.3]{OS}, 
we set
	$\B (w;m_{\mu})
	\eqdef
	\left\{ p_{J} \ \left| \ J \subset \{ 1,\ldots ,L \}  \right. \right\}$
and
	$\ed (p_{J}) \eqdef z_{r}\in \widetilde{W}_{\ext}$.
Then we define
	$\QB(w; m_{\mu})$
to be the following subset of $\B (w; m_{\mu})$:
	\begin{equation*}
		\left\{
			 p_{J} \in \B (w ;m_{\mu}) \ 
			\left|  \ 
				\dr(z_{i}) 
				\xrightarrow{-\left(\overline{ \tilde{\beta}_{j_{i+1}}  } \right)^{\lor}}
				\dr (z_{i+1}) \ 
				\text{ is a directed edge of }\QBG (W), \ 0\leq  i \leq r-1
			\right. 
		\right\} ;
	\end{equation*}
an element of $\QB(w ; m_\mu)$ is called a quantum
alcove path with starting point $w$ and directions given by
a reduced decomposition of $m_\mu$.

	\begin{srem}[\normalfont{\cite[(2.4.7)]{M}}]
		If $j \in \{ 1, \ldots, L \}$,
		then $-\left(\overline{ \tilde{\beta}_{j}  } \right)^{\lor} \in {\Delta}^{+}$.
	\end{srem}

	For $p_{J} \in \QB(w ; m_{\mu})$, we define $\qwt (p_{J})$ as follows.
Let $J^- \subset J$ denote the set of those indices $j_i \in J$ for which
	$\dr(z_{i-1}) 
	\xrightarrow{-\left(\overline{\tilde{\beta}_{j_i}  } \right)^{\lor}} 
	\dr (z_{i})$ is a quantum edge of $\QBG(W)$.
Then we set
	\begin{equation*}
		{\qwt}(p_{J}) 
		\eqdef
		\sum_{j \in J^-} \tilde{\beta}_{j}.
	\end{equation*}

Now, following \cite[Definition 1.9]{FM},
let  $C_w^{m_{\mu}}$ 
denote the graded character
\begin{equation}
\sum_{p_{J} \in \QB(w ; m_{\mu}) } 
		q^{\deg({\qwt}(p_{J}))}e^{\wt(\ed (p_{J}))}
\end{equation}
of $\QB(w;m_\mu)$.

\begin{srem}[{\cite{OS}; see also \cite{FM}}]
For $\mu \in P$, we denote by $E_\mu (q,t)$ the nonsymmetric Macdonald polynomial, and 
by $E_\mu(q, 0)$ (resp., $E_\mu(q, \infty)$) 
its specialization $\lim_{t\rightarrow 0}E_\mu (q,t)$
(resp., $\lim_{t\rightarrow \infty}E_\mu (q,t)$) at $t=0$ (resp., $t=\infty$).
\begin{enu}
\item
For the special case $w=e$, 
it holds that 
\begin{equation*}
E_{\mu}(q, 0)=C_e^{m_{\mu}}.
\end{equation*}
\item
For the special case $w=\lon$, 
it holds that 
\begin{equation*}
E_{\mu}(q\inv, \infty)=\lon C_{\lon}^{m_{\mu}};
\end{equation*}
namely, 
\begin{equation*}
E_{\mu}(q\inv, \infty)=\sum_{p_{J} \in \QB(\lon ; m_{\mu}) } 
		q^{\deg({\qwt}(p_{J}))}e^{\lon \wt(\ed (p_{J}))}.
\end{equation*}
\end{enu}
\end{srem}
Let $\Fg_\aff$ denote the affine Lie algebra associated to $\Fg$, and let
$\Fg_\aff
=
\mathfrak{n}_\aff
\oplus
\mathfrak{h}_\aff
\oplus
\mathfrak{n}_\aff^-
$ be its triangular decomposition.
\begin{srem}
We should warn the reader that the root datum of the affine Lie algebra $\Fg_\aff$ is not necessarily dual to that of the untwisted affine Lie algebra
associated to $\widetilde{\Fg}$
whose set of real roots is $\widetilde{\Delta}_\aff$, though the root datum of $\widetilde{\Fg}$ is dual to that of $\Fg$.
In particular, for the index $0 \in I_\aff$, the simple coroot $\alpha_0^\lor = c - \theta^\lor$, where $\theta \in \Delta^+$ is the highest root of $\Fg$
and $c$ is the canonical central element of $\Fg_\aff$,
does not agree with the simple root $\alpha^\lor_0 = \widetilde{\delta}-\varphi^\lor$ 
(see the beginning of this section).
\end{srem}
%
\begin{sdfn}[{\cite[Definition 2.1]{FM}}]
Let $\lambda$ be a dominant weight, and $w \in W$.
Then the generalized Weyl module $W_{w \lon \lambda}$
is the cyclic $\mathfrak{n}_\aff$-module with
a generator $v$ and following relations:
\begin{align*}
(h \otimes t^k) v &= 0 \ \mbox{for all} \ h \in \mathfrak{h}, k>0,\\
(f_{\alpha}\otimes t)v &= 0\ \mbox{for}\ \alpha \in w \Delta^- \cap \Delta^-, \\
(e_{\alpha}\otimes 1)v &= 0\ \mbox{for}\  \alpha \in w \Delta^- \cap \Delta^+, \\
(f_{w \alpha}\otimes t)^{-\pair{\lon \lambda}{\alpha^\lor}}v &= 0\ \mbox{for}\  \alpha \in \Delta^+ \cap w^{-1}\Delta^-, \\
(e_{w \alpha}\otimes 1)^{-\pair{\lon \lambda}{\alpha^\lor}} v &= 0\ \mbox{for}\  \alpha \in \Delta^+ \cap w^{-1}\Delta^+, 
\end{align*}
where $e_\alpha$, $f_{-\alpha}$, $\alpha \in \Delta^+$, denote the Chevalley generators. 
\end{sdfn}
We can regard the generalized Weyl module $W_{w \lon \lambda}$ as $\mathfrak{n}_\aff \oplus \mathfrak{h}$-module by:
$h v = \pair{\lon\lambda}{h}v$, $h \in \mathfrak{h}$;
hence the module $W_{w \lon \lambda}$ is a $\mathfrak{h}$-weighted module.
Also, the module $W_{w \lon \lambda}$ has a grading defined by the conditions:
$\deg(v) = 0$, and the operator of the form $x \otimes t^k \in \mathfrak{n}_\aff$ increases a degree by $k$.
Thus, the graded character of the generalized Weyl module $W_{w \lon \lambda}$ is defined by:
\begin{equation*}
\gch \, W_{w \lon \lambda}
\eqdef
\sum
\dim \left(W_{w \lon \lambda}[\gamma, k]\right)q^k e^\gamma ,
\end{equation*}
where $W_{w \lon \lambda}[\gamma, k]$
denotes the subspace of $W_{w \lon \lambda}$ of degree $k$ and $\mathfrak{h}$-weight $\gamma$.
Feigin-Makedonskyi proved that
the graded character $\gch \, W_{w\lon\lambda}$ of $W_{w\lon\lambda}$
is identical to the graded character
$C_w^{t(\lon \lambda)}(=C_w^{m_{\lon \lambda}})$ (see \cite[Theorem 2.21]{FM} for details).

\section{Quantum Lakshmibai-Seshadri paths and some variants of their graded characters}
\subsection{Quantum Lakshmibai-Seshadri paths}
\begin{dfn}[{\cite[Definition 3.1]{LNSSS2}}]\label{def_qls}
Let $\lambda \in P^+$ be a dominant  weight,
and set $S \eqdef S_\lambda = \{ i \in I \ | \ \langle \lambda , \alpha^\lor_i \rangle =0 \}$.
A pair $\eta = (w_1, w_2 ,\ldots ,w_s ; \sigma_0, \sigma_1 , \ldots , \sigma_s ) $
of a sequence
$w_1,\ldots , w_s$ of  elements in $W^S$ 
such that $w_k \neq w_{k+1}$ for $1  \leq k \leq s-1$
   and an increasing sequence  
$0=\sigma_0< \cdots < \sigma_s=1$ of rational numbers,
is called a quantum Lakshmibai-Seshadri ($\QLS$) path of shape $\lambda$
if 
\begin{enu}
\item[(C)]
for every $1\leq i \leq s-1$, there exists a directed  path from $w_{i+1}$ to $w_{i}$ in $\QBG_{\sigma_i \lambda} (W^S)$.
\end{enu}
\noindent
Let $\QLS(\lambda)$ denote the set of all $\QLS$ paths of shape $\lambda$.
\end{dfn}

\begin{rem}\label{def_qls_rem}
As in \cite[Definition 3.2.2 and Theorem 4.1.1]{LNSSS4},
condition (C) can be replaced by{\rm:}
\begin{enu}
\item[(C')]
for every $1\leq i \leq s-1$, there exists a shortest directed  path in $\QBG (W^S)$ from $w_{i+1}$ to $w_{i}$
that is also a directed path in $\QBG_{\sigma_i \lambda} (W^S)$.
\end{enu}
\end{rem}

The set $\QLS(\lambda)$ provides a realization of the crystal basis of a particular quantum Weyl module $\da$
over $\qa$,
where
$\qa$ denotes
the quantum affine algebra without the degree operator. 
 (see \cite[Theorem 4.1.1]{LNSSS4}, \cite[Theorem 3.2]{NS-IMRN2005}, \cite[Remark 2.15]{Nakajima}).
Moreover,
$\QLS(\lambda) \cong \bigotimes_{i \in I} \QLS(\varpi_i)^{\otimes m_i}$ as $\qa$-crystals, where
$\lambda = \sum_{i \in I}m_i \varpi_i$;
in particular,
$\# \QLS(\lambda) = \prod_{i \in I} \left(\# \QLS(\varpi_i)\right)^{m_i}$.

\subsection{Some variants of graded characters}

Let $\lambda = \sum_{i \in I} m_i \varpi_i \in P^+$, $m_i \in \mathbb{Z}_{\geq 0}$, be a dominant weight, and $w \in W$.
For $\eta = (w_1, \ldots, w_s; \sigma_0, \ldots, \sigma_s) \in \QLS (\lambda)$, we set
\begin{align*}
\wt(\eta) &\eqdef \sum_{i=1}^{s} (\sigma_{i}- \sigma_{i-1})w_{i}\lambda \in P, \\
\Dg^* (\eta) &\eqdef \sum_{i=1}^{s-1} (1 - \sigma_i) \wt_\lambda (w_{i+1} \Rightarrow w_{i}), \\
\Dg_* (\eta) &\eqdef \sum_{i=1}^{s-1} \sigma_i \wt_\lambda (w_{i+1} \Rightarrow w_{i}),\\
\Dg^{w} (\eta) &\eqdef \Dg^* (\eta) + \wt_\lambda (w_{1} \Rightarrow w_{0}) =\sum_{i=0}^{s-1} (1 - \sigma_i) \wt_\lambda (w_{i+1} \Rightarrow w_{i}),\\
\Dg_{w} (\eta) &\eqdef \Dg_* (\eta) + \wt_\lambda (w_{s+1} \Rightarrow w_{s}) =\sum_{i=1}^{s} \sigma_i \wt_\lambda (w_{i+1} \Rightarrow w_{i}),
\end{align*}
where we set
$w_{0}\eqdef w$ and $w_{s+1}\eqdef w$.
Note that by Remark \ref{def_qls_rem}, 
we have $\sigma_{i} \wt_{\lambda}(w_{i+1} \Rightarrow w_{i})\in \mathbb{Z}_{\geq 0}$ for $1\leq i \leq s-1$.
Hence it follows that $\Dg^* (\eta), \Dg_* (\eta), \Dg^w (\eta),\Dg_w (\eta) \in \mathbb{Z}_{\geq 0}$;
notice that $\sigma_0 =0$, and $\sigma_s=1$.

Also, we set
\begin{align*}
\gch^w \QLS(\lambda) \eqdef \sum_{\eta \in \QLS (\lambda)} q^{-\Dg^w (\eta)} e^{\wt(\eta)}, \\
\gch_w \QLS(\lambda) \eqdef \sum_{\eta \in \QLS (\lambda)} q^{-\Dg_w (\eta)} e^{\wt(\eta)}.
\end{align*}

Let
$V(\lambda)$ be the extremal weight module of extremal weight $\lambda$ over the quantum affine algebra $\qad$, and by $V_w^-(\lambda) \subset V(\lambda)$
the Demazure submodule over the negative part $U^{-}_{\mathsf{v}}(\Fg_\aff)$ of $\qad$;
see \cite{NS-Z} for details.

In \cite{NNS},
we proved that 
$\gch V_w^-(\lambda)$ of the Demazure submodule $V_w^-(\lambda)$ of $V(\lambda)$
is identical to
\begin{equation*}
\left(\prod_{i \in I}\prod_{r=1}^{m_i}(1-q^{-r})\right)^{-1}\gch_w \QLS(\lambda).
\end{equation*}
Moreover, the graded character of a specific (finite-dimensional) quotient module $V_w^- (\lambda) / X_w^- (\lambda)$ of $V_w^- (\lambda)$
over 
$U^{-}_{\mathsf{v}}(\Fg_\aff)$
is identical to $\gch_w \QLS(\lambda)${\rm;}
see \cite[(5.7)]{NNS} for the definition of the quotient
module $V^-_w (\lambda) / X^-_w (\lambda)$.

\begin{rem}
Let $\lambda \in P^+$ be a dominant weight.
Then we know the following:
\begin{enu}
\item
if $w=e$, then $\gch_{e} \QLS(\lambda)=E_{\lon \lambda}(q^{-1}, 0)$ (\cite[Lemma 7.7 and Theorem 7.9]{LNSSS2});

\item
if $w=\lon$, then $\gch_{\lon} \QLS(\lambda)=E_{\lon \lambda}(q, \infty)$ (\cite[Theorem 3.2.7]{NNS}),
\end{enu}
where $E_{\lon \lambda}(q, 0)$ and $E_{\lon \lambda}(q, \infty)$ denote the specializations of the nonsymmetric Macdonald polynomial 
$E_{\lon \lambda}(q, t)$ at $t=0$ and $t=\infty$, respectively.
Here we remark that 
for $\eta \in \QLS(\lambda)$,
$\Dg^{*}(\eta)$
and
$\Dg_{w}(\eta)$
are
equal to 
$-\Dg (\eta)$ in \cite[Theorem 4.6]{LNSSS2}
and
$-{\mathrm{deg}}_{w \lambda} (\eta)$ in \cite[Section 3.2]{NNS},
respecively.
\end{rem}

\subsection{Lusztig involution}
Let $\lambda \in P^+$ be a dominant weight, and set
$S \eqdef S_\lambda = \{ i \in I \ | \ \langle \lambda , \alpha^\lor_i \rangle =0 \}$.
In this subsection, we state the relationship between the graded characters $\gch_w \QLS(\lambda)$ and $\gch^w \QLS(\lambda)$ for $w \in W$.

For $\eta = (w_1, \ldots, w_s; \sigma_0, \ldots , \sigma_s) \in \QLS(\lambda)$,
we define 
$S(\eta)$
to be $ (\lfloor \lon w_s \rfloor, \ldots , \lfloor \lon w_1 \rfloor; 1-\sigma_s, \ldots, 1-\sigma_0)$.
The following follows from \cite[Section 4.5]{LNSSS2}.
\begin{lem}\label{lem:luszting_involution}
\noindent
\begin{enu}
\item
Let $w_1, w_2\in W$. Then, $\wt_\lambda(\lon w_1\Rightarrow \lon w_2)=\wt_\lambda(w_2 \Rightarrow w_1)$.

\item
Let $\eta =  (w_1, \ldots, w_s; \sigma_0, \ldots, \sigma_s) \in \QLS(\lambda)$. Then we have
\begin{enu}
\item
$S(\eta)\in \QLS(\lambda)${\rm;}

\item
$\wt (S(\eta)) = \lon \wt (\eta)${\rm;}

\item
$\Dg_w(S(\eta)) = \Dg^{\lon w}(\eta)$.
\end{enu}
\end{enu}
\end{lem}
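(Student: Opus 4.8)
The plan is to deduce everything from a \emph{left-handed} companion of Lemma~\ref{involution}, together with the lift/descent machinery of Lemmas~\ref{8.5} and \ref{8.1}. First I would record the following: if $x \xrightarrow{\beta} y$ is a Bruhat (resp.\ quantum) edge of $\QBG(W)$, then $\lon y \xrightarrow{\beta} \lon x$ is again a Bruhat (resp.\ quantum) edge, \emph{with the same label} $\beta$. This is proved by exactly the length computation used for Lemma~\ref{involution}: from $y = x s_\beta$ one gets $\lon x = (\lon y)s_\beta$, and the identity $\ell(\lon z) = \ell(\lon) - \ell(z)$ converts the defining (in)equality for the edge $x\to y$ into the one for $\lon y \to \lon x$. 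Applying this edge-wise to a shortest path $w_2 = x_0 \to \cdots \to x_r = w_1$ yields a shortest path $\lon w_1 = \lon x_r \to \cdots \to \lon x_0 = \lon w_2$ whose quantum edges carry the very same labels; since $\wt$ only records the coroots of the quantum labels, the two paths have equal $\wt$, which is assertion (1).

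For (2a) I must check condition (C) for $T(\eta)$. Writing $T(\eta) = (u_1,\dots,u_s;\tau_0,\dots,\tau_s)$ with $u_j = \lfloor \lon w_{s+1-j}\rfloor$ and $\tau_j = 1-\sigma_{s-j}$, the sequence $\tau_0 < \cdots < \tau_s$ is increasing with $\tau_0 = 0$, $\tau_s = 1$, and the $u_j$ are consecutively distinct because $w\mapsto \lfloor\lon w\rfloor$ is an involution (hence injective) on $W^S$. Reindexing by $k = s-i$, condition (C) for $T(\eta)$ at $i$ becomes: given a directed path from $w_{k+1}$ to $w_k$ in $\QBG_{\sigma_k\lambda}(W^S)$, produce one from $\lfloor\lon w_k\rfloor$ to $\lfloor\lon w_{k+1}\rfloor$ in $\QBG_{(1-\sigma_k)\lambda}(W^S)$. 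I would do this by passing through the non-parabolic graph: (i) by Lemma~\ref{8.5}(2),(3) with $w = e$, the given parabolic path lifts to a path in $\QBG_{\sigma_k\lambda}(W)$ from some $x \in w_{k+1}W_S$ to $w_k$; (ii) the left-handed involution above reverses this to a path from $\lon w_k$ to $\lon x$ with labels unchanged, and here lies the decisive point --- since $\langle\lambda,\beta^\lor\rangle\in\BZ$ and $\sigma_k\langle\lambda,\beta^\lor\rangle\in\BZ$ for each label $\beta$, also $(1-\sigma_k)\langle\lambda,\beta^\lor\rangle\in\BZ$, so the reversed path lies in $\QBG_{(1-\sigma_k)\lambda}(W)$; (iii) applying Lemma~\ref{8.1} to each of its edges and concatenating descends it to a path in $\QBG_{(1-\sigma_k)\lambda}(W^S)$ from $\lfloor\lon w_k\rfloor$ to $\lfloor\lon x\rfloor = \lfloor\lon w_{k+1}\rfloor$, as required. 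I expect this coset-bookkeeping through the lift and descent to be the main obstacle.

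Assertion (2b) is then a direct computation: since $\lfloor\lon w_{s+1-j}\rfloor$ differs from $\lon w_{s+1-j}$ by right multiplication by an element of $W_S = \mathrm{Stab}_W(\lambda)$, we have $u_j\lambda = \lon(w_{s+1-j}\lambda)$, and expanding $\wt(T(\eta)) = \sum_j (\tau_j-\tau_{j-1})u_j\lambda$ with $\tau_j - \tau_{j-1} = \sigma_{s+1-j}-\sigma_{s-j}$ and reindexing $i = s+1-j$ collapses the sum into $\lon\wt(\eta)$.

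Finally, for (2c) I would write $\Dg_w(T(\eta)) = \sum_{j=1}^{s}(1-\sigma_{s-j})\,\wt_\lambda(u_{j+1}\Rightarrow u_j)$ with $u_{s+1} = w$. For $1\le j\le s-1$, converting the parabolic $\lambda$-weight to the finite graph by (\ref{eq:QBG_and_parabolic_QBG_weight}) and then applying (1) gives $\wt_\lambda(u_{j+1}\Rightarrow u_j) = \wt_\lambda(w_{s+1-j}\Rightarrow w_{s-j})$, so these terms sum to $\Dg^*(\eta)$ after reindexing $i = s-j$. The boundary term $j = s$ (with $\tau_s = 1$) is $\wt_\lambda(w\Rightarrow\lfloor\lon w_1\rfloor)$, which equals $\wt_\lambda(w\Rightarrow\lon w_1)$ by (\ref{eq:QBG_and_parabolic_QBG_weight}) and then $\wt_\lambda(w_1\Rightarrow\lon w)$ by (1); this is exactly the extra term $\wt_\lambda(w_1\Rightarrow w_0)$ with $w_0 = \lon w$ that upgrades $\Dg^*(\eta)$ to $\Dg^{\lon w}(\eta)$. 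Summing the two contributions gives $\Dg_w(T(\eta)) = \Dg^{\lon w}(\eta)$.
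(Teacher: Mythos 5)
Your proposal is correct, and it takes a genuinely different route from the paper: it is self-contained where the paper relies on citations. The paper's proof disposes of parts (1), (2a), and (2b) by citing \cite[Section 4.5]{LNSSS2}, and of the core of (2c) by citing \cite[Corollary 4.8]{LNSSS2} for the identity $\Dg_*(T(\eta))=\Dg^*(\eta)$; the only computation actually carried out there is the boundary-term manipulation, which is exactly your last step (convert $\wt_\lambda(w\Rightarrow\lfloor\lon w_1\rfloor)$ by \eqref{eq:QBG_and_parabolic_QBG_weight}, then apply part (1) to get $\wt_\lambda(w_1\Rightarrow\lon w)$). You instead reconstruct all the imported ingredients inside the paper's own toolkit: part (1) follows from your label-preserving, left-multiplication companion of Lemma \ref{involution}, which is an involution on edges and hence carries shortest paths to shortest paths while preserving quantum labels, so it even yields the stronger equality $\wt(\lon w_1\Rightarrow\lon w_2)=\wt(w_2\Rightarrow w_1)$ in $Q^\lor$ before pairing with $\lambda$; part (2a) follows from your lift--reverse--descend argument (Lemma \ref{8.5}\,(2),\,(3) with $w=e$ to lift, the observation that $(1-\sigma_k)\langle\lambda,\beta^\lor\rangle\in\BZ$ to see that the reversed path lies in $\QBG_{(1-\sigma_k)\lambda}(W)$, and Lemma \ref{8.1} edge by edge to descend), with the coset bookkeeping $\lfloor\lon x\rfloor=\lfloor\lon w_{k+1}\rfloor$ and the injectivity of $u\mapsto\lfloor\lon u\rfloor$ on $W^S$ both checking out; and the identity $\Dg_*(T(\eta))=\Dg^*(\eta)$ is rederived term by term from part (1) and \eqref{eq:QBG_and_parabolic_QBG_weight} rather than quoted. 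The trade-off: the paper's route is a few lines long, while yours makes the mechanism explicit and removes the dependence on \cite{LNSSS2} for this lemma. A small bonus: your index bookkeeping in (2c) is cleaner than the paper's displayed chain, which writes $w_s$ (as in $\wt_\lambda(w\Rightarrow\lfloor\lon w_s\rfloor)$) where the last direction of $T(\eta)$, namely $\lfloor\lon w_1\rfloor$, is the one actually entering $\Dg_w(T(\eta))$ --- a harmless index slip that your computation silently corrects.
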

\begin{proof}
Parts (1), (2a), and (2b) are proved in \cite[Section 4.5]{LNSSS2}.
Let us prove part (2c).
It follows from \cite[Corollary 4.8]{LNSSS2} that
\begin{equation*}
\Dg_* (S(\eta)) = \Dg^*(\eta).
\end{equation*}
Also, by part (1), we have 
\begin{equation*}
\wt_\lambda( w \Rightarrow  \lfloor \lon w_{s} \rfloor )
=
\wt_\lambda( w \Rightarrow  \lon w_{s})=\wt_\lambda(w_s \Rightarrow \lon w)
\hspace{10mm}
\mbox{(by equation \eqref{eq:QBG_and_parabolic_QBG_weight})}
.
\end{equation*}
From these, we see that
\begin{align*}
\Dg_w(S(\eta)) 
&= \Dg_* (S(\eta)) + \wt_\lambda ( w \Rightarrow  \lon w_{s}) \\
&=\Dg^*(\eta)+ \wt_\lambda ( w_s \Rightarrow \lon w )
=\Dg^{\lon w }(\eta),
\end{align*}
as desired.
This proves the lemma.
\end{proof}
The operator $S$ above is an involution on $\QLS(\lambda)$,
 called the Lusztig involution.
By using Lemma \ref{lem:luszting_involution}, 
we deduce that
\begin{equation}\label{eq:lusztig_involution_graded_character}
\gch_w \QLS(\lambda) = \lon \left( \gch^{\lon w} \QLS(\lambda) \right) = 
 \sum_{\eta \in \QLS (\lambda)} q^{-\Dg^{\lon w} (\eta)} e^{\lon \wt(\eta)}
\end{equation}
for $w \in W$.

\section{Relationship between the two graded characters}
\subsection{Relationship between the graded characters $\gch \, W_{w\lambda}$ and $\gch (V^-_w(\lambda ) / X^-_w (\lambda))$ }
We define an involution $\overline{\ \cdot \textcolor{white}{i}}$ on $\mathbb{Q}(q)$ by $\overline{q}=q^{-1}$,
and set
$\overline{f} \eqdef \sum_{\mu \in P}\overline{f_\mu}e^\mu$
for $f=\sum_{\mu \in P}f_\mu e^\mu$ with $f_\mu \in \mathbb{Q}(q)$.
%
%
\begin{thm}\label{thm:graded_character}
Let $\lambda \in P^+$ be a dominant weight, and $w \in W$.
Then
we have
\begin{equation*}
\overline{C_{w}^{t(\lon \lambda)}} = \gch^{w \lon }\QLS(\lambda).
\end{equation*}
In particular, $\# \QB(w, t(\lon \lambda)) = \# \QLS(\lambda)$ {\rm(}see Proposition $\ref{bijective}${\rm)}.
\end{thm}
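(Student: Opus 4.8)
The plan is to reduce the identity to a single \emph{statistic-preserving bijection} $\Xi\colon \QB(w; t(\lon\lambda)) \to \QLS(\lambda)$, namely the bijection of Proposition~\ref{bijective}. Applying the bar involution to the definition of $C_w^{t(\lon\lambda)}$ gives $\overline{C_w^{t(\lon\lambda)}} = \sum_{p_J \in \QB(w; t(\lon\lambda))} q^{-\deg(\qwt(p_J))} e^{\wt(\ed(p_J))}$, while $\gch^{w\lon}\QLS(\lambda) = \sum_{\eta \in \QLS(\lambda)} q^{-\Dg^{w\lon}(\eta)} e^{\wt(\eta)}$. Hence it suffices to exhibit a bijection $\Xi$ with $\wt(\ed(p_J)) = \wt(\Xi(p_J))$ and $\deg(\qwt(p_J)) = \Dg^{w\lon}(\Xi(p_J))$ for every $p_J$; the cardinality assertion is then immediate from the bijectivity of $\Xi$. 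I would obtain $\Xi$ by generalizing the construction of Lenart--Naito--Sagaki--Schilling--Shimozono (the case $w = e$) to an arbitrary initial direction $w$.

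\emph{Construction of $\Xi$.} For $p_J \in \QB(w; t(\lon\lambda))$ the defining condition of $\QB$ says that $\dr(z_0), \dr(z_1), \ldots, \dr(z_r)$ is a directed path in $\QBG(W)$, where $\dr(z_0) = w$ because $m_{\lon\lambda} = t(\lon\lambda)$ and $\dr(t(\lon\lambda)) = e$ by \eqref{equ:mcr_antidominant}--\eqref{equ:direction}. I expect to apply the $\lon$-involution of Lemma~\ref{involution}, which reverses this path and right-translates it by $\lon$ while preserving Bruhat/quantum edge types, turning it into a directed path in $\QBG(W)$ terminating at $\dr(z_0)\lon = w\lon$. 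Each index $j_i \in J$ carries a level $\sigma(j_i) \in \mathbb{Q} \cap [0,1)$ determined by $\deg(\tilde{\beta}_{j_i})$ and $\pair{\lambda}{\beta_i^\lor}$, where $\beta_i \eqdef -(\overline{\tilde{\beta}_{j_i}})^\lor \in \Delta^+$, normalized so that a quantum step contributes $\deg(\tilde{\beta}_{j_i}) = (1 - \sigma(j_i))\pair{\lambda}{\beta_i^\lor}$ to the degree. Grouping the steps of the twisted path by common level yields $0 = \sigma_0 < \sigma_1 < \cdots < \sigma_s = 1$, and projecting the directions on each level interval to $W^S$ via $\lfloor\,\cdot\,\rfloor$ (using Lemma~\ref{8.1}) yields $w_1, \ldots, w_s$; the endpoint $w\lon$ becomes the reference direction $w_0 = w\lon$. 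I set $\Xi(p_J) \eqdef (w_1, \ldots, w_s; \sigma_0, \ldots, \sigma_s)$.

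\emph{Verifications.} That $\Xi(p_J) \in \QLS(\lambda)$, i.e.\ condition (C$'$) of Remark~\ref{def_qls_rem} at each internal $\sigma_i$, should follow because the level-$\sigma_i$ steps form a directed path in $\QBG_{\sigma_i\lambda}(W)$, combined with Lemma~\ref{8.1} and Lemma~\ref{8.5}. Weight preservation $\wt(\ed(p_J)) = \sum_i (\sigma_i - \sigma_{i-1}) w_i\lambda = \wt(\Xi(p_J))$ is obtained by telescoping the affine reflections defining $z_r$, as in the case $w = e$; the $\lon$-twist is forced already by the empty path, whose endpoint weight $\wt(w\, t(\lon\lambda)) = w\lon\lambda$ requires the single direction $\lfloor w\lon\rfloor$. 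Degree preservation reads $\deg(\qwt(p_J)) = \sum_{j \in J^-}\deg(\tilde{\beta}_j) = \sum_{i=0}^{s-1}(1 - \sigma_i)\wt_\lambda(w_{i+1}\Rightarrow w_i) = \Dg^{w\lon}(\Xi(p_J))$: since the $\lon$-involution preserves quantum edges, the quantum steps at level $\sigma_i$ are precisely the quantum edges of a shortest path from $w_{i+1}$ to $w_i$, each contributing $(1 - \sigma_i)\pair{\lambda}{\beta^\lor}$ by the normalization. For invertibility, given $\eta \in \QLS(\lambda)$ I reconstruct $J$ by replacing each turn $w_{i+1}\Rightarrow w_i$ by the unique label-increasing shortest directed path of Proposition~\ref{shellability} and Lemma~\ref{8.5} (in the $w\lon$-tilted Bruhat order), and untwisting via Lemma~\ref{involution}.

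\emph{Main obstacle.} The essential difficulty is the bijectivity together with the degree matching for a \emph{general} $w$. The case $w = e$ is the Lenart--Naito--Sagaki--Schilling--Shimozono construction; the generalization requires tracking the initial direction $w$ through the $\lon$-involution and re-running the shellability and tilted-order arguments (Proposition~\ref{shellability}, Lemma~\ref{8.5}) with reference direction $w\lon$ in place of $\lon$, in particular checking that the label-increasing paths used to build $\Xi^{-1}$ lie in the relevant $\QBG_{\sigma_i\lambda}(W)$ and recombine into a genuine element of $\QB(w; t(\lon\lambda))$. Once these combinatorial verifications are secured, the Theorem follows by comparing the two sums term by term through $\Xi$.
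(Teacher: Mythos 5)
Your route is the paper's own: the paper proves the theorem by constructing precisely this statistic-preserving bijection $\Xi_w\colon \QB(w;\tra)\to\QLS(\lambda)$ (twist by $\lon$ via Lemma \ref{involution}, group the steps by levels, project by $\lfloor\,\cdot\,\rfloor$ using Lemma \ref{8.1}), proving injectivity and surjectivity through label-increasing shortest paths and tilted-Bruhat minima (Proposition \ref{shellability}, Lemma \ref{8.5}, Lemmas \ref{injective}--\ref{surjective}), and then matching weights and degrees (Proposition \ref{qls_qb}). However, the one formula you make explicit --- the normalization of the levels, which is also the engine of your degree-preservation argument --- is wrong, and not merely as a typo.

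You attach to $j_i$ the level determined by $\deg(\tilde{\beta}_{j_i}) = (1-\sigma(j_i))\pair{\lambda}{\beta_i^\lor}$, where $\beta_i = -(\overline{\tilde{\beta}_{j_i}})^\lor$ is the label of the \emph{original} edge $\dr(z_{i-1})\to\dr(z_i)$. The correct denominator, forced both by the integrality $\sigma\pair{\lambda}{\gamma^\lor}\in\BZ$ needed for membership in $\QBG_{\sigma\lambda}$ and by matching with $\Dg^{w\lon}$, is the pairing of $\lambda$ with the coroot of the \emph{twisted} label $\lon\left(\overline{\tilde{\beta}_{j_i}}\right)^\lor$, namely $\pair{\lambda}{\lon\overline{\tilde{\beta}_{j_i}}} = \pair{\lon\lambda}{\overline{\tilde{\beta}_{j_i}}}$; this is exactly what the map $\Phi$, \eqref{2.15} and \eqref{sigma_with_a} encode in the paper. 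The two pairings agree when $\lon$ acts by $-1$ on $\hrd$, but differ in types $A_{n\geq 2}$, $D_{\rm odd}$, $E_6$; worse, your denominator can vanish: in type $A_2$ with $\lambda=\varpi_1$, the affine root $\tilde{\beta} = -\alpha_2^\lor+\widetilde{\delta}$ lies in $\ptrr\cap\tra\inv\ntrr$ by \eqref{B}, and $\pair{\lambda}{\beta_i^\lor}=\pair{\varpi_1}{\alpha_2^\lor}=0$, whereas the correct value $\pair{\lon\lambda}{\overline{\tilde{\beta}}} = \pair{-\varpi_2}{-\alpha_2^\lor}=1$ gives the level $0$. As written, then, your levels are undefined or wrong, the grouping that defines $\Xi$ changes, condition (C) for $\QLS(\lambda)$ fails, and the identity $\deg(\qwt(p_J)) = \Dg^{w\lon}(\Xi(p_J))$ breaks; this is the gap to close. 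Once the normalization is corrected your outline coincides with the paper's proof, though note also that (i) the inverse map uses iterated tilted-Bruhat minima $v_p=\min(y_pW_S,\leq_{v_{p-1}})$ with moving reference $v_{p-1}$, not a fixed $w\lon$-tilted order, and (ii) the verifications you defer as the ``main obstacle'' (that the reconstructed label-increasing paths concatenate into a $\prec'$-increasing sequence, hence a genuine element of $\QB(w;\tra)$, and the induction giving the degree identity) constitute the actual substance of Lemmas \ref{injective}, \ref{surjective} and Proposition \ref{qls_qb}.
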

We will give a proof of Theorem \ref{thm:graded_character} in Section 5.3.
By combining Theorem \ref{thm:graded_character} and \eqref{eq:lusztig_involution_graded_character}
with $w$ replaced by $\lon w \lon$,
we obtain the following theorem.
\begin{thm}
Let $\lambda \in P^+$ be a dominant weight, and $w \in W$.
Then
we have
\begin{equation*}
\lon \left( \overline{C_{w}^{t(\lon \lambda)}} \right) = \gch_{\lon w \lon }\QLS(\lambda).
\end{equation*}
\end{thm}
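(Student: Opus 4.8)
The plan is to derive the final statement directly from Theorem \ref{thm:graded_character} together with the Lusztig-involution identity \eqref{eq:lusztig_involution_graded_character}, so that essentially no new combinatorics is required. The final statement asserts
\[
\lon\left(\overline{C_{w}^{t(\lon\lambda)}}\right) = \gch_{\lon w\lon}\QLS(\lambda),
\]
and the two ingredients just cited already express each side in compatible terms. Concretely, Theorem \ref{thm:graded_character} gives $\overline{C_{w}^{t(\lon\lambda)}} = \gch^{w\lon}\QLS(\lambda)$, and \eqref{eq:lusztig_involution_graded_character} gives, for an arbitrary $u\in W$, the identity $\gch_{u}\QLS(\lambda) = \lon\left(\gch^{\lon u}\QLS(\lambda)\right)$.

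First I would apply $\lon$ to both sides of the identity furnished by Theorem \ref{thm:graded_character}, obtaining
\[
\lon\left(\overline{C_{w}^{t(\lon\lambda)}}\right) = \lon\left(\gch^{w\lon}\QLS(\lambda)\right).
\]
Next I would specialize \eqref{eq:lusztig_involution_graded_character} to the element $u = \lon w\lon$. Since $\lon u = \lon(\lon w\lon) = w\lon$ (using $\lon^2 = e$), the right-hand side of that identity becomes precisely $\lon\left(\gch^{w\lon}\QLS(\lambda)\right)$, which matches the expression just obtained. Therefore
\[
\lon\left(\overline{C_{w}^{t(\lon\lambda)}}\right) = \lon\left(\gch^{w\lon}\QLS(\lambda)\right) = \gch_{\lon w\lon}\QLS(\lambda),
\]
as claimed.

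The only genuine point to verify is the bookkeeping of the involution $\lon$ on indices, namely that substituting $u=\lon w\lon$ into \eqref{eq:lusztig_involution_graded_character} produces the superscript $\lon u = w\lon$ appearing in Theorem \ref{thm:graded_character}; this uses $\lon^2=e$ in $W$ and is immediate. The main conceptual weight of the argument therefore rests entirely in Theorem \ref{thm:graded_character} itself (whose proof, via the weight- and degree-preserving bijection $\QB(w;t(\lon\lambda))\to\QLS(\lambda)$, is deferred to Section 5.3) and in Lemma \ref{lem:luszting_involution} underlying \eqref{eq:lusztig_involution_graded_character}; once those are in hand, the present statement is a one-line formal consequence with no remaining obstacle.
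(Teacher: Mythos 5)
Your proposal is correct and matches the paper's own derivation exactly: the paper likewise obtains this theorem by combining Theorem \ref{thm:graded_character} with \eqref{eq:lusztig_involution_graded_character}, and your index bookkeeping (substituting $u = \lon w \lon$ and using $\lon^2 = e$ to get $\lon u = w\lon$) supplies precisely the one-line computation the paper leaves implicit.
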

Because
 $C_{w}^{t(\lon \lambda)}$ equals the graded character $\gch W_{w \lon \lambda}$ of the generalized Weyl module $W_{w \lon \lambda}$,
and $\gch_{\lon w \lon}\QLS(\lambda)$ equals the graded character $\gch (V^-_{\lon w \lon}(\lambda ) / X^-_{\lon w \lon} (\lambda))$ of the quotient module
$V^-_{\lon w \lon}(\lambda ) / X^-_{\lon w \lon} (\lambda)$,
we obtain the following.
\begin{thm}\label{thm:eq_character}
Let $\lambda \in P^+$ be a dominant weight, and $w \in W$. Then we have
\begin{equation}
\lon \left(\overline{\gch W_{w \lon \lambda}}\right) = \gch (V^-_{\lon w \lon}(\lambda ) / X^-_{\lon w \lon} (\lambda)).
\end{equation}
\end{thm}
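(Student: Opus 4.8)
The plan is to deduce Theorem~\ref{thm:eq_character} as a purely formal consequence of four facts already in hand: the identity of Theorem~\ref{thm:graded_character}, the Lusztig-involution relation~\eqref{eq:lusztig_involution_graded_character}, the Feigin--Makedonskyi equality $C_{w}^{t(\lon\lambda)} = \gch W_{w\lambda}$ \cite{FM}, and the equality $\gch(V^-_{v}(\lambda)/X^-_{v}(\lambda)) = \gch_{v}\QLS(\lambda)$ from \cite{NNS}. Concretely, I would first establish the intermediate identity
\begin{equation*}
\lon\bigl(\overline{C_{w}^{t(\lon\lambda)}}\bigr) = \gch_{\lon w\lon}\QLS(\lambda),
\end{equation*}
valid for all $w\in W$, and then translate each side into the language of generalized Weyl modules and Demazure quotients.

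To prove the intermediate identity, I would start from Theorem~\ref{thm:graded_character}, namely $\overline{C_{w}^{t(\lon\lambda)}} = \gch^{w\lon}\QLS(\lambda)$, and apply the $W$-action by $\lon$ to both sides. Since the bar substitution $q\mapsto q^{-1}$ acts on the coefficient ring $\mathbb{Q}(q)$ while $\lon$ acts on the group-algebra part through $\lon\cdot e^{\mu} = e^{\lon\mu}$, the two operations commute and create no interference. It then remains to recognize $\lon(\gch^{w\lon}\QLS(\lambda))$ as a graded character of the shape $\gch_{\bullet}\QLS(\lambda)$. For this I would invoke~\eqref{eq:lusztig_involution_graded_character}, which reads $\gch_{v}\QLS(\lambda) = \lon(\gch^{\lon v}\QLS(\lambda))$ for every $v\in W$, and specialize it at $v = \lon w\lon$. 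Using $\lon^{2} = e$ gives $\lon v = \lon\cdot\lon w\lon = w\lon$, whence $\gch_{\lon w\lon}\QLS(\lambda) = \lon(\gch^{w\lon}\QLS(\lambda))$, which is exactly the right-hand side produced above. Combining the two displays yields the intermediate identity.

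Finally, I would substitute the two module-theoretic identifications. On the left, $C_{w}^{t(\lon\lambda)} = \gch W_{w\lambda}$ gives $\lon(\overline{C_{w}^{t(\lon\lambda)}}) = \lon(\overline{\gch W_{w\lambda}})$. On the right, the result of~\cite{NNS} applied with $v = \lon w\lon$ gives $\gch_{\lon w\lon}\QLS(\lambda) = \gch(V^-_{\lon w\lon}(\lambda)/X^-_{\lon w\lon}(\lambda))$. Equating the two rewritten sides produces precisely
\begin{equation*}
\lon\bigl(\overline{\gch W_{w\lambda}}\bigr) = \gch(V^-_{\lon w\lon}(\lambda)/X^-_{\lon w\lon}(\lambda)),
\end{equation*}
as claimed.

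Within this derivation no single step is genuinely hard: once Theorem~\ref{thm:graded_character} is available, everything reduces to bookkeeping with the commuting operators $\overline{\,\cdot\,}$ and $\lon$, together with the index substitution $v = \lon w\lon$ enabled by $\lon^{2} = e$. The real obstacle lies upstream, in the proof of Theorem~\ref{thm:graded_character} itself (deferred to Section~5.3): it requires constructing an explicit weight- and degree-preserving bijection $\QB(w; t(\lon\lambda)) \to \QLS(\lambda)$ that generalizes the Lenart--Naito--Sagaki--Schilling--Shimozono map $\Xi$ \cite{LNSSS2} from the case $w = e$ to arbitrary $w\in W$, and verifying that it sends $\deg(\qwt(\cdot))$ to $\Dg^{w\lon}$ and $\wt(\ed(\cdot))$ to $\wt$. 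That bijection, rather than the formal assembly carried out above, is where the substance of the argument resides.
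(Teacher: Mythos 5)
Your proposal is correct and follows essentially the same route as the paper: the paper likewise deduces the intermediate identity $\lon\bigl(\overline{C_{w}^{t(\lon\lambda)}}\bigr) = \gch_{\lon w\lon}\QLS(\lambda)$ by combining Theorem~\ref{thm:graded_character} with \eqref{eq:lusztig_involution_graded_character}, and then substitutes the Feigin--Makedonskyi equality $C_{w}^{t(\lon\lambda)} = \gch W_{w\lambda}$ and the identification $\gch_{\lon w\lon}\QLS(\lambda) = \gch(V^-_{\lon w\lon}(\lambda)/X^-_{\lon w\lon}(\lambda))$ from \cite{NNS}. You also correctly locate the real content upstream in the bijection $\Xi_w$ proving Theorem~\ref{thm:graded_character}, which is exactly where the paper places it.
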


\subsection{Reduced expressions for $t(\lon \lambda)$ and a total order on $\ptrr \cap t(\lon \lambda)\inv \ntrr $}
Let $\lambda \in  P^+ $ be a dominant weight,
and set $\lambda_- \eqdef \lon \lambda$, $S \eqdef S_\lambda =\{ i \in I \ | \ \langle \lambda , \alpha^\lor_i \rangle =0 \}$.
For $\mu \in W\lambda$, we denote by $v(\mu)\in W^S$ the minimal-length coset representative for the coset $\{ w \in W  \ | \ w \lambda = \mu \}$
in $W/W_S$;
note that
$\lon = v(\lambda_-) \lons$
and
$\ell(\lon) = \ell(v(\lambda_-)) + \ell(\lons)$,
where $\lons$ denotes the longest element of $W_S$.

In this subsection, we recall from \cite{NNS} a particular reduced expression for $m_{\lambda_-}$ $(=t({\lambda_-})$ by
\eqref{equ:mcr_antidominant})
with respect to a fixed total order on $\ptrr \cap t(\lon \lambda)\inv \ntrr $, and review some of its properties.

We fix reduced expressions
\begin{align}
v(\lambda_-)&=  s_{i_1}\cdots s_{i_M} \label{equ:vlambda}, \\ 
\lons &= s_{i_{M+1}}\cdots s_{i_N} \label{equ:lons}
\end{align}
for 
$v(\lambda_-)$ and $\lons$, respectively.
Then 
\begin{equation}\label{equ:lon}
  \lon =  s_{i_1}\cdots s_{i_N} 
  \end{equation}
is a reduced expression for $\lon$.
We set $\beta_j \eqdef s_{i_N} \cdots s_{i_{j+1}}\alpha_{i_j}$, $1 \leq j \leq N$.
Then we have
$\Delta^+ \setminus \Delta^+_S = \{ \beta_1, \ldots, \beta_M \}$ and
$\Delta^+_S  = \{ \beta_{M+1}, \ldots, \beta_N \}$.
We fix a total order on $\Delta^+$ such that
\begin{align}\label{weak_reflection_order}
\underbrace{ \beta_1 \succ \beta_2 \succ \cdots \succ \beta_M }_{\in  \Delta^+ \setminus \Delta^+_S} \succ 
\underbrace{ \beta_{M+1} \succ \cdots \succ \beta_{N}}_{\in \Delta^+_S} .
\end{align}

\begin{rem}\label{rem:weak_reflection_order}
We call the total order $\prec$ above a reflection order on $\Delta^+$;
if $\alpha ,\beta ,\gamma \in \Delta^+$ with $\gamma^\lor = \alpha^\lor + \beta^\lor$,
then $\alpha \prec \gamma \prec \beta$ or $\beta \prec \gamma \prec \alpha$.
\end{rem}

Now, we define an injective map
\begin{align*}
\Phi : \ptrr \cap \tra\inv \ntrr 
&\rightarrow
\mathbb{Q}_{\geq 0} \times \left( \Delta^+ \setminus \Delta^+_S \right) ,\\
\beta = \overline{\beta} + \deg(\beta) \widetilde{\delta}
&\mapsto 
\left(\frac{ \langle {\lambda_-} ,  \overline{\beta} \rangle -  \deg(\beta)}{ \langle {\lambda_-} ,  \overline{\beta} \rangle }  
,  \lon \overline{\beta}^\lor
\right).
\end{align*}
Here we note that $\langle {\lambda_-} ,  \overline{\beta} \rangle >0$, $\langle {\lambda_-} ,  \overline{\beta} \rangle -  \deg(\beta) \geq 0$,
and 
$\lon \overline{\beta}^\lor \in \Delta^+\setminus \Delta^+_S $
since $\langle {\lambda_-} ,  \overline{\beta} \rangle = \langle {\lambda} ,  \lon \overline{\beta} \rangle >0$;
recall from \cite[(2.4.7) (i)]{M}
that
\begin{equation}\label{B}
\ptrr \cap \tra\inv \ntrr =
\{ \alpha + a \widetilde{\delta} 
\ | \
\alpha \in \Delta^-,
a \in \mathbb{Z}, \mbox{ and }
0 < a \leq  \langle \lambda_- , \alpha^{\lor}
\rangle 
\}.
\end{equation}
Let us consider the lexicographic order $<$ on $\mathbb{Q}_{\geq 0} \times (\Delta^+ \setminus \Delta^+_S)$
induced by the usual total order on $\mathbb{Q}_{\geq 0}$
and
the reverse order of
the restriction to $\Delta^+ \setminus \Delta^+_S$
of the total order $\prec$ on $\Delta^+$ above;
that is, for $(a, \alpha), (b, \beta) \in \mathbb{Q}_{\geq 0} \times (\Delta^+ \setminus \Delta^+_S)$,
\begin{equation*}
(a, \alpha)<(b, \beta) \mbox{ if and only if } 
a<b, \mbox{ or }
a=b \mbox{ and } \alpha \succ \beta.
\end{equation*}
Then we denote by $\prec'$ 
the total order on $\ptrr \cap \tra\inv \ntrr$
induced by  the lexicographic order on $\mathbb{Q}_{\geq 0} \times (\Delta^+ \setminus \Delta^+_S)$
through the injective map $\Phi$.

The proof of the following proposition is the same as that of \cite[Proposition 3.1.8]{NNS}.
\begin{prop}\label{goodreducedexpression}
With the notation and setting above, let us write $\ptrr \cap \tra\inv \ntrr$ as 
$\left\{\gamma_1 \prec' \cdots \prec' \gamma_L \right\}$.
Then, there exists a unique reduced expression $\tra= u s_{\ell_1}\cdots s_{\ell_L}$ for $\tra$, with $u \in \Omega$ and  $\{\ell_1, \ldots , \ell_L \} \subset I_{\aff}$,
such that
$\tilde{\beta}_j 
=\gamma_j$
for $1 \leq j \leq L$, where 
$\tilde{\beta}_j = s_{\ell_L}\cdots s_{\ell_{j+1}}\alpha_{\ell_j}^\lor$, $1 \leq j \leq L$.
\end{prop}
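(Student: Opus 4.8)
The plan is to read this as the standard correspondence between reduced expressions and reflection orders on an inversion set, the only paper-specific point being that the given order $\prec'$ is a reflection order. First I would dispose of uniqueness. If $\tra = u s_{\ell_1}\cdots s_{\ell_L}$ is such an expression, then $\tilde{\beta}_L = \alpha_{\ell_L}^\lor = \gamma_L$ forces $\gamma_L$ to be a simple affine root and pins down $\ell_L$; descending on $j$, the identity $\alpha_{\ell_j}^\lor = s_{\ell_{j+1}}\cdots s_{\ell_L}\gamma_j$ determines each $\ell_j$, and then $u = \tra\, s_{\ell_L}\cdots s_{\ell_1}$ is forced, with $\ell(u)=0$ so that $u \in \Omega$. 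Hence only existence is at issue. Since left multiplication by a length-zero element permutes $\ptrr$, the set $\ptrr \cap \tra\inv \ntrr$ together with the order $\prec'$ depends only on the element of the (non-extended) affine Weyl group underlying $\tra$; so existence follows from the general fact that a total order on the inversion set of such an element is realized by a reduced word precisely when it satisfies the reflection-order (convexity) condition, and that this is a bijection. It therefore suffices to check that $\prec'$ satisfies the convexity condition: whenever $\gamma, \gamma', \gamma'' \in \ptrr \cap \tra\inv\ntrr$ with $\gamma'' = \gamma + \gamma'$, the root $\gamma''$ lies strictly between $\gamma$ and $\gamma'$ in $\prec'$.

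To verify this I would work through $\Phi$ coordinate by coordinate, using that $\gamma'' = \gamma + \gamma'$ gives $\deg(\gamma'') = \deg(\gamma) + \deg(\gamma')$ and $\overline{\gamma''} = \overline{\gamma} + \overline{\gamma'}$, hence $\pair{\lambda_-}{\overline{\gamma''}} = \pair{\lambda_-}{\overline{\gamma}} + \pair{\lambda_-}{\overline{\gamma'}}$. For the first coordinate, writing the height fraction as $\deg(\gamma)/\pair{\lambda_-}{\overline{\gamma}}$, the value for $\gamma''$ is the mediant of those for $\gamma$ and $\gamma'$ and hence a convex combination of them with positive weights, so it lies between them, strictly unless the two are equal. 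If the first coordinates of $\gamma$ and $\gamma'$ differ, the lexicographic order is decided in the first slot and $\gamma''$ is strictly between $\gamma$ and $\gamma'$. If they agree, then all three first coordinates coincide and I would pass to the second coordinate $\lon\overline{(\cdot)}^\lor$: from $\overline{\gamma''} = \overline{\gamma} + \overline{\gamma'}$ one gets $\lon\overline{\gamma''} = \lon\overline{\gamma} + \lon\overline{\gamma'}$, i.e. the coroots of the three second-coordinate roots add, so Remark \ref{rem:weak_reflection_order} (that $\prec$ is a reflection order on $\Delta^+$) places $\lon\overline{\gamma''}^\lor$ strictly between $\lon\overline{\gamma}^\lor$ and $\lon\overline{\gamma'}^\lor$ in $\prec$; since $\prec'$ uses the reverse of $\prec$ in the second slot, $\gamma''$ is again strictly between $\gamma$ and $\gamma'$. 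This confirms the convexity condition.

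Putting these together, $\prec'$ is a reflection order on $\ptrr \cap \tra\inv \ntrr$, so the desired reduced expression exists, and by the first paragraph it is unique; this is exactly the assertion. I expect the main obstacle to be the tie case of the convexity check, where the height fractions coincide: there one must confirm that $\gamma, \gamma', \gamma''$ genuinely share a common first coordinate before invoking the finite reflection-order property, and that the chosen orientations — the usual order on $\BQ_{\geq 0}$ in the first slot against the reverse of $\prec$ in the second — are precisely the ones making both slots certify the same betweenness. A secondary technical point, already handled by the reduction, is to keep track of the length-zero factor so that it is absorbed into $u \in \Omega$ rather than into the word $s_{\ell_1}\cdots s_{\ell_L}$.
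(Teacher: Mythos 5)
Your uniqueness argument is fine, and your verification of the ``within-inversion-set'' convexity (the mediant inequality for the first coordinate of $\Phi$, plus Remark \ref{rem:weak_reflection_order} for ties) is correct. The gap is in the existence step: the ``general fact'' you invoke is false as stated. Betweenness of sums $\gamma''=\gamma+\gamma'$ with \emph{all three} roots in the inversion set does \emph{not} characterize the total orders induced by reduced expressions. Rank-two counterexample: for $v=s_is_j$ with $i,j$ adjacent and simply laced, the inversion set is $N(v)=\{\alpha_j,\ \alpha_i+\alpha_j\}$; no triple $\gamma,\gamma',\gamma+\gamma'$ lies inside $N(v)$, so \emph{both} total orders on $N(v)$ satisfy your condition vacuously, yet $v$ has a unique reduced expression, which (in the paper's convention $\tilde{\beta}_L=\alpha_{\ell_L}^\lor$) induces only the order $\alpha_i+\alpha_j\prec\alpha_j$. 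The correct characterization (Papi's theorem for finite Weyl groups, extended to the affine setting by Cellini--Papi and Ito) has a second clause: whenever $\gamma''=\gamma+\gamma'$ with $\gamma'',\gamma$ in the inversion set but $\gamma'\in\ptrr\setminus(\ptrr\cap\tra\inv\ntrr)$, one must have $\gamma''\prec'\gamma$. Without stating and checking this clause, existence is unproven.

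The clause does hold here, so the gap is fillable, but it requires a genuine argument using the explicit description \eqref{B} of $\ptrr\cap\tra\inv\ntrr$. Sketch: if $\overline{\gamma'}$ is a positive coroot, then $\deg(\gamma')\geq 0$ and $\pair{\lambda_-}{\overline{\gamma'}}\leq 0$, so the first coordinate of $\Phi(\gamma'')$ is $\leq$ that of $\Phi(\gamma)$; in the case of equality one has $\deg(\gamma')=0$, $\pair{\lambda_-}{\overline{\gamma'}}=0$, so that $-\lon\overline{\gamma'}$ is the coroot of some $B\in\Delta_S^+$, and then $\left(\lon\overline{\gamma}^\lor\right)^\lor=\left(\lon\overline{\gamma''}^\lor\right)^\lor+B^\lor$ together with Remark \ref{rem:weak_reflection_order} and the fact that $\Delta_S^+$ lies entirely below $\Delta^+\setminus\Delta_S^+$ in the order \eqref{weak_reflection_order} forces $\lon\overline{\gamma}^\lor\prec\lon\overline{\gamma''}^\lor$, i.e.\ $\gamma''\prec'\gamma$ after reversal in the second slot; if instead $\overline{\gamma'}$ is a negative coroot, then $\deg(\gamma')>\pair{\lambda_-}{\overline{\gamma'}}\geq 0$ by \eqref{B}, and the first coordinate strictly drops. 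Note also that the paper itself does not argue this way at all: it simply asserts that the proof is the same as that of Proposition 3.1.8 of [NNS], so your route (a self-contained deduction from the convex-order characterization) is genuinely different, but as written it rests on an incorrectly stated criterion.
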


In the following, we use the reduced expression $\tra= u s_{\ell_1}\cdots s_{\ell_L}$ for $\tra$ given by this proposition.
The proof of the following lemma is the same as that of \cite[Lemma 3.1.10]{NNS}.
\begin{lem}\label{lengthadditive}
Keep the notation and setting above.
Then, $u s_{\ell_{M+1}} \cdots s_{\ell_L}$ is a reduced expression for $m_\lambda$.
Moreover, if we write $us_{\ell_k} =s_{i'_k}u$ for $1 \leq k \leq M$,
then $i_k = i'_k$,
where $\lon = s_{i_1} \cdots s_{i_N}$
is the reduced expression \eqref{equ:lon} for $\lon$.
\end{lem}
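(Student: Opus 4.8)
The plan is to locate, inside the fixed reduced expression $\tra = u s_{\ell_1}\cdots s_{\ell_L}$, both the finite element $v(\lambda_-)$ and the minimal coset representative $m_\lambda$ by keeping track of the inversion roots $\tilde{\beta}_j = \gamma_j$ supplied by Proposition \ref{goodreducedexpression}. Throughout I use that $v(\lambda_-) = \lon\lons$ with $\ell(v(\lambda_-)) = M$, that $\lons\lambda = \lambda$, and that $\lambda_- = \lon\lambda$ is antidominant. First I identify $m_\lambda$ algebraically: since $v(\lambda_-)\lambda = \lambda_-$, the element $x \eqdef v(\lambda_-)\inv\tra = t(\lambda)v(\lambda_-)\inv$ lies in $t(\lambda)W$. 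Applying the action formula of $\widetilde{W}_{\ext}$ on $\mathfrak{h}\oplus\BC\widetilde{\delta}$ to a finite simple coroot $\alpha_i^\lor$, I find $x\,\alpha_i^\lor = v(\lambda_-)\inv\alpha_i^\lor - \pair{\lambda_-}{\alpha_i^\lor}\widetilde{\delta}$, which is a positive real root in every case: the $\widetilde{\delta}$-coefficient is positive when $\pair{\lambda_-}{\alpha_i^\lor}<0$, and otherwise $i^* \in S$ and $x\alpha_i^\lor = -\lons\alpha_{i^*}^\lor\in\Delta^+_S$, where $i \mapsto i^*$ is the involution with $\lon\alpha_i = -\alpha_{i^*}$. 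Hence $x = m_\lambda$ and $\tra = v(\lambda_-)\,m_\lambda$; multiplying on the right by $v(\lambda_-)$ recovers $t(\lambda)$, so the minimal-representative property gives $\ell(m_\lambda) = \ell(t(\lambda)) - M = L - M$ and the factorization is length-additive.

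Next I compute the inversion roots explicitly. By the shape of $\Phi$, an inversion root has first coordinate $0$ exactly when $\deg(\gamma) = \pair{\lambda_-}{\overline{\gamma}}$, and by \eqref{B} there are precisely $M$ such roots, namely $\gamma_1,\ldots,\gamma_M$, whose second coordinates run through $\beta_1,\ldots,\beta_M$; thus $\gamma_j = \lon\beta_j^\lor + \pair{\lambda}{\beta_j^\lor}\widetilde{\delta}$ for $1\le j\le M$. Setting $\delta_j \eqdef s_{i_M}\cdots s_{i_{j+1}}\alpha_{i_j}^\lor$ (the $j$-th inversion root of the chosen word $v(\lambda_-) = s_{i_1}\cdots s_{i_M}$), one has $\delta_j = \lons\beta_j^\lor$, and a short computation with $m_\lambda\inv = v(\lambda_-)t(-\lambda)$, again using $v(\lambda_-)=\lon\lons$ and $\lons\lambda=\lambda$, yields $m_\lambda\inv\delta_j = \gamma_j$, i.e. $m_\lambda\gamma_j = \delta_j$. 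Feeding this into the length-additive decomposition $\mathrm{Inv}(\tra) = \mathrm{Inv}(m_\lambda)\sqcup m_\lambda\inv\mathrm{Inv}(v(\lambda_-))$ turns it into $\{\gamma_1,\ldots,\gamma_L\} = \mathrm{Inv}(m_\lambda)\sqcup\{\gamma_1,\ldots,\gamma_M\}$, so $\mathrm{Inv}(m_\lambda) = \{\gamma_{M+1},\ldots,\gamma_L\}$.

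Since $\tilde{\beta}_j = \gamma_j$, the suffix $s_{\ell_{M+1}}\cdots s_{\ell_L}$ has inversion set exactly $\{\gamma_{M+1},\ldots,\gamma_L\}$, of cardinality $L-M$. Therefore $u s_{\ell_{M+1}}\cdots s_{\ell_L}$ and $m_\lambda$ share both their inversion set (hence their $\widetilde{W}_{\aff}$-components) and their length-zero component $u$ (the latter because left multiplication by the finite element $v(\lambda_-)$ does not change the $\Omega$-part of $\tra$); consequently they coincide. This proves the first assertion, and that $u s_{\ell_{M+1}}\cdots s_{\ell_L}$ is reduced.

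For the ``Moreover'', writing $u s_{\ell_k} = s_{i'_k}u$ and conjugating gives $s_{i'_1}\cdots s_{i'_M} = u s_{\ell_1}\cdots s_{\ell_M}u\inv$, and cancelling $m_\lambda$ in $\tra = (s_{i'_1}\cdots s_{i'_M})m_\lambda = v(\lambda_-)m_\lambda$ yields $s_{i'_1}\cdots s_{i'_M} = v(\lambda_-)$. The main obstacle is that this is only an equality of group elements, whereas I must establish the letter-by-letter equality of the two reduced words. I expect to resolve it exactly as above: the $j$-th inversion root of $s_{i'_1}\cdots s_{i'_M}$ equals $u\,(s_{\ell_M}\cdots s_{\ell_{j+1}}\alpha_{\ell_j}^\lor) = (u s_{\ell_{M+1}}\cdots s_{\ell_L})\gamma_j = m_\lambda\gamma_j = \delta_j$, which is precisely the $j$-th inversion root of the fixed reduced expression $s_{i_1}\cdots s_{i_M}$ of $v(\lambda_-)$. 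As a reduced word is recovered uniquely from its sequence of inversion roots, this forces $i'_k = i_k$ for all $1\le k\le M$.
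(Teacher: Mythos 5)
Your proof is correct, and it is necessarily an independent route: the paper gives no argument of its own for this lemma, deferring entirely to \cite[Lemma 3.1.10]{NNS}. Your self-contained argument rests on three sound steps. (i) You identify $m_\lambda = v(\lambda_-)\inv\tra = t(\lambda)v(\lambda_-)\inv$ via the criterion that an element $x$ of $t(\lambda)W$ is minimal iff $x\alpha_i^\lor\in\ptrr$ for all $i\in I$; your case split ($\pair{\lambda_-}{\alpha_i^\lor}<0$ versus $=0$) is exhaustive precisely because $\lambda_-$ is antidominant, and both computations check out. (ii) You pin down the first $M$ inversion roots as $\gamma_j=\lon\beta_j^\lor+\pair{\lambda}{\beta_j^\lor}\widetilde{\delta}$ — these are exactly the elements with vanishing first $\Phi$-coordinate, and the reversed ordering on second coordinates matches $\beta_1\succ\cdots\succ\beta_M$ — and verify $m_\lambda\gamma_j=\delta_j=\lons\beta_j^\lor$. (iii) You combine the inversion-set cocycle formula for the length-additive factorization $\tra=v(\lambda_-)m_\lambda$ with the facts that an element of $\widetilde{W}_{\aff}$ is determined by its inversion set, that the $\Omega$-components agree, and, for the ``Moreover'' part, that a reduced word is uniquely recoverable from its sequence of inversion roots; this last point correctly upgrades the group-element equality $s_{i'_1}\cdots s_{i'_M}=v(\lambda_-)$ to the letter-by-letter statement. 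The ingredients you quote without proof ($W$-invariance of $\mu\mapsto\ell(t(\mu))$, that conjugation by $u\in\Omega$ permutes the simple affine roots so $u\inv\alpha_{i'_k}^\lor=\alpha_{\ell_k}^\lor$, determination of Coxeter-group elements by inversion sets) are all standard. One cosmetic slip: in step (i), $-\lons\alpha_{i^*}^\lor$ is a positive coroot (a positive root of $\widetilde{\Fg}$ supported on $S$), not literally an element of $\Delta_S^+$; the same root/coroot looseness appears in the paper's \eqref{B} and is harmless.
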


We set 
$a_k \eqdef \deg(\tilde{\beta}_k) \in \mathbb{Z}_{> 0}$ for $1\leq k \leq L$;
since 
$\ptrr \cap \tra\inv \ntrr
= \{ \tilde{\beta}_1, \ldots , \tilde{\beta}_L \}$,
we see by (\ref{B}) that
$0<a_k \leq \langle {\lambda_-}, \overline{\tilde{\beta}_k} \rangle$.

\begin{cor}\label{akbk}
For $1 \leq k \leq M$, we have $\lon \overline{\tilde{\beta}_k} = \beta_k^\lor$,
where $\beta_k \eqdef s_{i_N} \cdots s_{i_{k+1}} \alpha_{i_k}$.
\end{cor}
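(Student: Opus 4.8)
The plan is to derive the identity directly from Lemma \ref{lengthadditive}, which already ties the affine data $u, s_{\ell_1}, \ldots, s_{\ell_M}$ to the finite data $s_{i_1}, \ldots, s_{i_M}$. The strategy proceeds in three stages: first I would rewrite $\tilde{\beta}_k$ so that the tail $s_{\ell_L}\cdots s_{\ell_{M+1}}$ gets absorbed into $m_\lambda\inv$; then strip the length-zero element $u$ off the front using the conjugation relations; and finally project to the finite part and apply $\lon$.

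Concretely, for $1 \leq k \leq M$ I would split the defining product at $M$,
\[
\tilde{\beta}_k = (s_{\ell_L}\cdots s_{\ell_{M+1}})(s_{\ell_M}\cdots s_{\ell_{k+1}})\alpha_{\ell_k}^\lor,
\]
and use the reduced expression $m_\lambda = u s_{\ell_{M+1}}\cdots s_{\ell_L}$ from Lemma \ref{lengthadditive} to write $s_{\ell_L}\cdots s_{\ell_{M+1}} = m_\lambda\inv u$. Pushing $u$ to the right through $s_{\ell_M}\cdots s_{\ell_{k+1}}$ by means of the relations $u s_{\ell_j} = s_{i_j} u$ (this is exactly the content $i'_j = i_j$ of Lemma \ref{lengthadditive}), together with $u\alpha_{\ell_k}^\lor = \alpha_{i_k}^\lor$ (since $u \in \Omega$ satisfies $u s_{\ell_k}u\inv = s_{i_k}$ and preserves $\ptrr$, it sends the affine simple root $\alpha_{\ell_k}^\lor$ to the finite simple root $\alpha_{i_k}^\lor$), yields the clean formula
\[
\tilde{\beta}_k = m_\lambda\inv\, s_{i_M}\cdots s_{i_{k+1}}\alpha_{i_k}^\lor .
\]

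The element $\xi_k \eqdef s_{i_M}\cdots s_{i_{k+1}}\alpha_{i_k}^\lor$ is a purely finite vector in $\mathfrak{h}$ (no $\widetilde{\delta}$-part), so taking the finite part commutes with the translation part of $m_\lambda\inv$ and only the directional action survives: $\overline{\tilde{\beta}_k} = \dr(m_\lambda\inv)\xi_k = \dr(m_\lambda)\inv\xi_k$. Here I would compute $\dr(m_\lambda) = v(\lambda_-)\inv$ from $m_\lambda = v(\lambda_-)\inv t(\lambda_-) = t(v(\lambda_-)\inv\lambda_-)\,v(\lambda_-)\inv$, where the relation $t(\lambda_-) = v(\lambda_-) m_\lambda$ itself follows from Lemma \ref{lengthadditive} via $u s_{\ell_1}\cdots s_{\ell_M}u\inv = s_{i_1}\cdots s_{i_M} = v(\lambda_-)$. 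Thus $\overline{\tilde{\beta}_k} = v(\lambda_-)\xi_k$. Applying $\lon$ and using $\lon v(\lambda_-) = \lons$ (because $v(\lambda_-) = \lon\lons$ from $\lon = v(\lambda_-)\lons$ and $\lon^2 = \id$) gives $\lon\overline{\tilde{\beta}_k} = \lons\, s_{i_M}\cdots s_{i_{k+1}}\alpha_{i_k}^\lor$. Finally, rewriting $\lons$ by its reverse reduced word $\lons = s_{i_N}\cdots s_{i_{M+1}}$ (legitimate since $\lons$ is an involution) telescopes the product into $s_{i_N}\cdots s_{i_{k+1}}\alpha_{i_k}^\lor = \beta_k^\lor$, as desired.

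The computation is essentially formal once Lemma \ref{lengthadditive} is available, and the only step that demands care is the bookkeeping of the overline map, namely checking that the finite part of $m_\lambda\inv\xi_k$ is governed purely by $\dr(m_\lambda\inv)$ with no surviving $\widetilde{\delta}$-contribution, and correctly identifying $\dr(m_\lambda) = v(\lambda_-)\inv$; everything else is a telescoping of reduced words. I expect this projection-and-direction step to be the main (if minor) obstacle. I would also remark that, since the coroot map commutes with the $W$-action, the identity $\lon\overline{\tilde{\beta}_k} = \beta_k^\lor$ is equivalent to $\lon\overline{\tilde{\beta}_k}^\lor = \beta_k$, i.e.\ that the second coordinate of $\Phi(\tilde{\beta}_k)$ equals $\beta_k$; this is precisely the compatibility with the order $\prec'$ underlying Proposition \ref{goodreducedexpression}.
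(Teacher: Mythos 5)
Your proof is correct: the splitting of the word at $M$, the identity $s_{\ell_L}\cdots s_{\ell_{M+1}} = m_\lambda\inv u$, the conjugation relations $us_{\ell_j}u\inv = s_{i_j}$ from Lemma \ref{lengthadditive} together with the positivity argument giving $u\alpha_{\ell_k}^\lor = \alpha_{i_k}^\lor$, the projection $\overline{m_\lambda\inv\xi_k} = \dr(m_\lambda)\inv\xi_k$, and the identifications $\dr(m_\lambda) = v(\lambda_-)\inv$ and $\lon v(\lambda_-) = \lons$ all check out. Your route is, however, arranged differently from the paper's. The paper applies $-t(\lambda_-) = -(us_{\ell_1}\cdots s_{\ell_L})$ directly to $\tilde{\beta}_k = s_{\ell_L}\cdots s_{\ell_{k+1}}\alpha_{\ell_k}^\lor$; the word telescopes (with a sign flip coming from $s_{\ell_k}\alpha_{\ell_k}^\lor = -\alpha_{\ell_k}^\lor$) to the prefix root $\hat{\beta}_k = us_{\ell_1}\cdots s_{\ell_{k-1}}\alpha_{\ell_k}^\lor$, and since a translation does not change finite parts, $\overline{\hat{\beta}_k} = -\overline{\tilde{\beta}_k}$; Lemma \ref{lengthadditive} then turns $\overline{\hat{\beta}_k}$ into $s_{i_1}\cdots s_{i_{k-1}}\alpha_{i_k}^\lor$, and hitting this with $\lon = s_{i_N}\cdots s_{i_1}$ telescopes to $\beta_k^\lor$. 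So the paper works with the full translation $t(\lambda_-)$ and the full reduced word of $\lon$, and never needs the direction $\dr(m_\lambda)$; in your version, the extra derivation $t(\lambda_-) = v(\lambda_-)m_\lambda$, hence $\dr(m_\lambda) = v(\lambda_-)\inv$, is the price you pay for splitting at $M$, and the parabolic factorization $\lon = v(\lambda_-)\lons$ replaces the full word of $\lon$. The two computations are algebraically equivalent --- indeed $v(\lambda_-)\xi_k = s_{i_1}\cdots s_{i_k}\alpha_{i_k}^\lor = -s_{i_1}\cdots s_{i_{k-1}}\alpha_{i_k}^\lor = -\overline{\hat{\beta}_k}$ --- but the paper's sign-flip arrangement is a couple of lines shorter, while yours has the merit of producing the conceptually clean intermediate statement $\tilde{\beta}_k = m_\lambda\inv\xi_k$ with $\xi_k$ purely finite (which makes the projection step transparent), and of making explicit the fact $u\alpha_{\ell_k}^\lor = \alpha_{i_k}^\lor$, which the paper uses silently under its citation of Lemma \ref{lengthadditive}.
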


\begin{proof}
If we set 
$\hat{\beta}_k \eqdef u s_{\ell_1}\cdots s_{\ell_{k-1}}\alpha_{\ell_k}^\lor$, $1 \leq k \leq M$,
then we have
\begin{align*}
-t({\lambda_-})\tilde{\beta}_k
&=
-( u s_{\ell_1}\cdots s_{\ell_L}) ( s_{\ell_L} \cdots s_{\ell_{k+1}} \alpha_{\ell_k}^\lor)
=
- u s_{\ell_1}\cdots s_{\ell_{k-1}}s_{\ell_k} \alpha_{\ell_k}^\lor \\
&=
- u s_{\ell_1}\cdots s_{\ell_{k-1}}( - \alpha_{\ell_k}^\lor)
=
u s_{\ell_1}\cdots s_{\ell_{k-1}} \alpha_{\ell_k}^\lor
=
\hat{\beta}_k.
\end{align*}
From this, together with the equality
$
-\tra \tilde{\beta}_k =
 -\overline{ \tilde{\beta}_k} 
 -(a_k - \langle {\lambda_-}, \overline{\tilde{\beta}_k} \rangle  )\widetilde{\delta}
$,
we obtain
$\overline{\hat{\beta}_k }=-\overline{\tilde{\beta}_k}$.
Therefore, we see that
\begin{align*}
\lon \overline{ \tilde{\beta}_k }=\lon (-\overline{\hat{\beta}_k})
&=
\lon(-\overline{ u s_{\ell_1}\cdots s_{\ell_{k-1}} \alpha_{\ell_k}^\lor})
=
\lon(-s_{i_1}\cdots s_{i_{k-1}} \alpha_{i_k}^\lor)
\hspace{10mm}
\mbox{(by \ Lemma~\ref{lengthadditive})}
\\
&=
(s_{i_N}\cdots s_{i_1})(-s_{i_1}\cdots s_{i_{k-1}}\alpha_{i_k}^\lor)=
s_{i_N}\cdots s_{i_{k+1}} \alpha_{i_k}^\lor
=
\beta_k^\lor,
\end{align*}
as desired.
\end{proof}

For $1 \leq k \leq L$, we set
	\begin{equation}\label{2.15}
		d_k \eqdef
			 \frac{ \langle \lambda_- ,  {\overline{\tilde{\beta}_k }} \rangle - a_k}{\langle \lambda_- ,  {\overline{\tilde{\beta}_k }} \rangle} ;
	\end{equation}
here, $d_k$ is just the first component of $\Phi(\tilde{\beta}_k) \in \mathbb{Q}_{\geq 0} \times (\Delta^+ \setminus \Delta^+_S)$.
Recall that for $1 \leq k,j \leq L$,
$\Phi(\tilde{\beta}_k)<\Phi(\tilde{\beta}_j)$ if and only if $k < j$.
Therefore, it follows that
	\begin{equation}\label{C}
		0\leq d_1 \leq \cdots \leq d_L \lneqq 1.
	\end{equation}
The following lemma follows from the definition of the map $\Phi$.
\begin{lem}[\normalfont{cf. \cite[Lemma 3.1.12]{NNS}}]\label{remark2.11}
If $1\leq k<j \leq L$ and $d_k =d_j$,
then	$\lon  \left(\overline{\tilde{\beta}_k }\right)^\lor  \succ \lon  \left(\overline{\tilde{\beta}_j }\right)^\lor $.
\end{lem}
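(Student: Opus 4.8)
The plan is to unwind the definitions of the map $\Phi$ and of the lexicographic order $<$; no substantive computation is required, since the assertion is essentially a reformulation of how $\prec'$ was built. First I would record that, by Proposition \ref{goodreducedexpression}, $\tilde{\beta}_m = \gamma_m$ for every $m$, so the labeling $\gamma_1 \prec' \cdots \prec' \gamma_L$ of $\ptrr \cap \tra\inv \ntrr$ turns the hypothesis $k < j$ into $\tilde{\beta}_k \prec' \tilde{\beta}_j$. Since $\prec'$ is by definition the order pulled back along the injection $\Phi$ from the lexicographic order $<$ on $\mathbb{Q}_{\geq 0} \times (\Delta^+ \setminus \Delta^+_S)$, this is equivalent to $\Phi(\tilde{\beta}_k) < \Phi(\tilde{\beta}_j)$.

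Next I would write the two values explicitly. By the definition of $\Phi$ together with \eqref{2.15}, we have $\Phi(\tilde{\beta}_k) = \left(d_k,\, \lon(\overline{\tilde{\beta}_k})^\lor\right)$ and $\Phi(\tilde{\beta}_j) = \left(d_j,\, \lon(\overline{\tilde{\beta}_j})^\lor\right)$, where $d_k$ and $d_j$ are precisely the first components. The hypothesis $d_k = d_j$ then says exactly that the first coordinates of these two pairs coincide, so the comparison $\Phi(\tilde{\beta}_k) < \Phi(\tilde{\beta}_j)$ is decided entirely by the second coordinates.

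Finally I would invoke the definition of $<$ for pairs with equal first coordinate: one has $(d_k, \alpha) < (d_j, \beta)$ if and only if $\alpha \succ \beta$, because the second slot is ordered by the \emph{reverse} of the restriction of $\prec$ to $\Delta^+ \setminus \Delta^+_S$. Applying this with $\alpha = \lon(\overline{\tilde{\beta}_k})^\lor$ and $\beta = \lon(\overline{\tilde{\beta}_j})^\lor$ yields $\lon(\overline{\tilde{\beta}_k})^\lor \succ \lon(\overline{\tilde{\beta}_j})^\lor$, which is the claimed inequality.

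There is no genuine obstacle here: the whole argument is a definition chase. The single point demanding care is the orientation of the order on the second coordinate — one must keep in mind that the lexicographic order $<$ was assembled from the \emph{reverse} of $\prec$, so that a $<$-comparison of second components translates into a $\succ$-comparison in the conclusion. Verifying that this sign is applied in the correct direction is the only thing to check.
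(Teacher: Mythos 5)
Your proof is correct and is exactly the paper's argument: the paper disposes of this lemma with the single remark that it ``follows from the definition of the map $\Phi$,'' and your definition chase---$k<j$ gives $\Phi(\tilde{\beta}_k)<\Phi(\tilde{\beta}_j)$ via Proposition \ref{goodreducedexpression}, the hypothesis $d_k=d_j$ forces the lexicographic comparison onto the second components, and the \emph{reversed} order there yields $\succ$---is precisely that definition unwound. The one point you flagged, the orientation of the order on the second coordinate, is indeed the only place a sign could go wrong, and you have it right.
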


\subsection{Proof of Theorem~\ref{thm:graded_character}}
We keep the notation of Section 5.2. 
In this subsection, 
in order to prove Theorem \ref{thm:graded_character},
we give a bijection
	\begin{equation*}
		\Xi_w : \QB(w; \tra) \rightarrow \QLS(\lambda)
	\end{equation*}that preserves  weights and degrees
for an arbitrary $w \in W$.
The way to construct this bijection
 is similar to the one for the bijection
$\Xi:\overleftarrow{\mathrm{QB}}(\id, \tra) \rightarrow \QLS(\lambda)$ in \cite[Section 3.3]{NNS}, where
$\overleftarrow{\rm QB}(\id, \tra)$ is a subset of $\B(\id; \tra)$ defined as follows:
\begin{equation*}
\{
p_J \in \B(\id; \tra) \ | \
\dr(z_{j_i}) \xleftarrow{-\left(\overline{\tilde{\beta}_{j_i}}\right)^\lor} \dr(z_{j_i+1}) \mbox{ is a directed edge of }\QBG(W) \mbox{ for }j \in J
\} .
\end{equation*}
Here we remark that the defining condition for $\overleftarrow{\rm QB}(\id, \tra)$ differs
from the one for $\QB(w, \tra)$ 
in that alllowable directed edges are reversed.
Also, note that we ignore the directed path \eqref{w'_p}
below for  for $p=0$
(which depends on $w \in W$)
in the construction of the map $\Xi_w$,
while we ignore the path (3.17) in \cite{NNS} for $p=0$
in the construction of the map $\Xi$.
Accodingly, in order to
 prove that $\Xi_w$ is both injective and surjective,
we need to check that for each $w \in W$ and $\eta \in \QLS(\lambda)$,
there exists a unique directed path in $\QBG(W)$ from some element in the coset $\kappa(\eta)W_S$ to $w\lon$
whose edge labels are increasing and lie in $\Delta^+\setminus \Delta^+_S$.

\begin{rem}\label{affinereflectionorder}
Let
$\gamma_1, \gamma_2 ,\ldots, \gamma_r \in \ptrr \cap \tra\inv \ntrr$,
and 
define a sequence
$\left( y_0 , y_1 , \ldots , y_r 
\right)$
 by:
$y_0 = w \tra $, and $y_i =y_{i-1}s_{\gamma_i}$ for $1 \leq i \leq r$.
Then, the pair of sequences 
$\left( y_0 , y_1 , \ldots , y_r ; \gamma_1 , \gamma_2, \ldots , \gamma_r \right)$
is an element of 
$\QB( w; \tra )$
if and only if the following conditions are satisfied{\rm :}

\begin{enu}
\item
$\gamma_1\prec' \gamma_2 \prec' \cdots \prec' \gamma_r$,
where the order $\prec'$ is 
the total order on $ \ptrr \cap \tra\inv \ntrr$
introduced in Section 5.2;

\item
for $1\leq i \leq r$,
$\dr(y_{i-1}) \xrightarrow{-\left( \overline{\gamma_i} \right)^\lor} \dr(y_{i})$
is
a directed edge of $\QBG (W)$.
\end{enu}
\end{rem}

Let us define a map
$\Xi_w : \QB (w ; \tra) \rightarrow \QLS (\lambda)$ as follows.
Let $p_{J}$ be an arbitrary element of  $\QB (w; \tra)$ of the form
	\begin{equation*}
		p_{J} = \left( w\tra = z_0 ,  z_{1} , \ldots , z_{r}
; \tilde{\beta}_{j_1} , \tilde{\beta}_{j_2}, \ldots , \tilde{\beta}_{j_r} \right) \in \QB (w; \tra ),
	\end{equation*}
where $J = \{ j_1 < \cdots < j_r\} \subset \{ 1 , \ldots , L \}$.
	We set $x_k \eqdef {\dr}(z_k)$,
	$0 \leq k \leq r$. Then, by the definition of $\QB (w ; \tra)$, 
	\begin{equation}\label{2.15.5}
		w =  x_0 \xrightarrow{- \left( \overline{ \tilde{\beta}_{j_1} } \right)^{\lor}  }   x_1
		\xrightarrow{- \left( \overline {\tilde{\beta}_{j_2} } \right)^{\lor} } \cdots  \xrightarrow{ - \left( \overline{ \tilde{\beta}_{j_r} } \right)^{\lor} }  x_r
	\end{equation}
	is a directed path in $\QBG (W)$; the equality
	$w=x_0$
	  follows from the equality $\dr(t(\lon \lambda)) = e$.
	We take $0 = u_0 \leq u_1 < \cdots < u_{s-1} < u_s=r$ and $0 = \sigma_0 < \sigma_1 < \cdots <\sigma_{s-1} < 1 = \sigma_{s}$ in such a way that
(see (\ref{C}))
\vspace{3mm}
	\begin{equation}\label{2.16}
		 \underbrace{ 0 = d_{j_1} = \cdots = d_{j_{u_1}} }_{=\sigma_0}
		< \underbrace{d_{j_{u_1 +1}} = \cdots =d_{j_{u_2}}}_{=\sigma_1} < \cdots <
		\underbrace{ d_{j_{u_{s-1}+1}} = \cdots =d_{j_r} }_{=\sigma_{s-1}} <1 = \sigma_{s};
	\end{equation}
	note that 
	$d_{j_1}>0$ if and only if $u_1=0$.
	We set
	$w'_p \eqdef x_{u_p}$ for $0 \leq p \leq s$. 
	Then, by taking a subsequence of (\ref{2.15.5}), we obtain the following  directed path in $\QBG (W)$
	for each $0 \leq p \leq s-1$:
	\begin{equation*}
		w'_p = x_{{u_p}} \xrightarrow{- \left( \overline{ \tilde{\beta}_{j_{u_p +1}} } \right)^{\lor} }   x_{{u_p +1}} 
		\xrightarrow{- \left( \overline {  \tilde{\beta}_{j_{u_p +2}} } \right)^{\lor} } \cdots  
		\xrightarrow{ -\left( \overline{  \tilde{\beta}_{j_{u_{p+1}}} } \right)^{\lor} }  x_{{u_{p+1}}} = w'_{p+1}.
	\end{equation*}
	Multiplying this directed path on the right by $\lon$, 
	we obtain the following directed path in $\QBG (W)$
	for each $0 \leq p \leq s-1$
	(see Lemma \ref{involution}):
	\begin{equation}\label{w'_p}
		w_p \eqdef
		w'_p \lon =
		x_{{u_p}} \lon \xleftarrow{ \lon \left( \overline{ \tilde{\beta}_{j_{u_p +1}} } \right)^{\lor} }  \cdots  
		\xleftarrow{ \lon \left( \overline{ \tilde{\beta}_{j_{u_{p+1}}} } \right)^{\lor} }  x_{{u_{p+1}}} \lon
		= w'_{p+1} \lon \eqdefr w_{p+1}.
	\end{equation}
	Note that $w_0 = w'_0 \lon = x_0 \lon = w \lon$. In addition, 
the edge labels of this directed path are increasing
in the reflection order $\prec$ on $\Delta^+$ given by \eqref{weak_reflection_order}
(see Lemma \ref{remark2.11}),
	  and lie in $\Delta^+ \setminus \Delta^+_S$;
	this property is used to prove that the map $\Xi_w$ is injective.
	Because
	\begin{equation}\label{sigma_with_a}
		\sigma_p \langle \lambda , \lon \overline{ {\tilde{\beta}_{j_u}} }  \rangle
		=
		d_{j_u} \langle \lambda , \lon \overline{ {\tilde{\beta}_{j_u}} }  \rangle
		=\frac{\langle \lambda_- ,  \overline{ {\tilde{\beta}_{j_u}} }  \rangle - a_{j_u}}{\langle \lambda_- ,  \overline{ {\tilde{\beta}_{j_u}} } \rangle} \langle \lambda , \lon \overline{ {\tilde{\beta}_{j_u}} }  \rangle
		=\langle \lambda_- ,  \overline{ {\tilde{\beta}_{j_u}} }  \rangle - a_{j_u} \in \mathbb{Z}
	\end{equation}
	for $u_p +1 \leq u \leq u_{p+1}$, $0 \leq p \leq s-1$,
	we find that (\ref{w'_p}) is a  directed  path in $\QBG_{\sigma_p \lambda} (W)$ for each $0 \leq p \leq s-1$.
	Therefore, by Lemma \ref{8.1}, there exists a  directed path in $\QBG_{\sigma_p \lambda}(W^S)$ from $\lfloor w_{p+1} \rfloor$ to $\lfloor w_p \rfloor$,
where
$S \eqdef S_\lambda = \{ i \in I \ | \ \langle \lambda , \alpha^\lor_i \rangle =0 \}$.
Also, we claim that $\lfloor w_p \rfloor \neq \lfloor w_{p+1} \rfloor$ for any $1 \leq p \leq s-1$. Suppose,
for a contradiction, that $\lfloor w_p \rfloor = \lfloor w_{p+1} \rfloor$
for some $p$.
Then, $w_p W_S = w_{p+1} W_S$,
and hence 
\begin{equation}\label{eq:min_tilt}\min(w_{p+1}W_S, \leq_{w_p}) = \min(w_{p}W_S, \leq_{w_p}) =w_p.
\end{equation}
Recall that 
the directed path (\ref{w'_p}) is a path in $\QBG (W)$ from $w_{p+1}$ to $w_{p}$ whose edge labels are increasing and lie in $\Delta^+ \setminus \Delta^+_S$.
By Lemma \ref{8.5}\,(1), (2), the directed path (\ref{w'_p}) is a shortest path in $\QBG (W)$ from $w_{p+1} = \min(w_{p+1}W_S, \leq_{w_p})$ to $w_p$.
It follows from \eqref{eq:min_tilt} 
that
$w_{p+1} = \min(w_{p+1}W_S, \leq_{w_p})
=w_p$,
and hence the length of the directed path (\ref{w'_p}) is equal to $0$.
Therefore, we have $\{ j_{u_p +1}, \ldots , j_{u_{p+1}} \} = \emptyset$, and hence $u_p = u_{p+1}$,
which contradicts the fact that $u_p < u_{p+1}$.
	Thus we obtain
	\begin{equation}\label{D}
		\eta \eqdef 
		(\lfloor w_{1} \rfloor , \ldots \ , \lfloor w_{s} \rfloor ; \sigma_0, \ldots, \sigma_s)  \in \QLS(\lambda).
	\end{equation}
We now define $\Xi_w (p_{J}) \eqdef \eta$.

In order to prove that the map $\Xi_w:\QB(w; \tra) \rightarrow \QLS(\lambda)$ is bijective,
we prove that the map is both injective and surjective;
in \cite[Sect. 3.3]{NNS}, we gave the inverse map $\Theta : \QLS(\lambda) \rightarrow \overleftarrow{\rm QB}(\id; \tra)$ of the map $\Xi$.

\begin{lem}\label{injective}
		The map $\Xi_w:\QB(w; \tra) \rightarrow \QLS(\lambda)$
is injective.
\end{lem}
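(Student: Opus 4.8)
The plan is to prove injectivity by reconstructing $p_J$ — equivalently, its index set $J \subset \{1,\ldots,L\}$ — from its image $\eta = \Xi_w(p_J) = (\lfloor w_1 \rfloor, \ldots, \lfloor w_s \rfloor; \sigma_0, \ldots, \sigma_s)$. Since $w$ is fixed throughout the construction, recovering $J$ suffices, because $p_J$ is determined by $J$ and $w$. The reconstruction proceeds in two stages: first I would recover the elements $w_0, w_1, \ldots, w_s \in W$ themselves (not merely their coset representatives $\lfloor w_p \rfloor$), and then, segment by segment, the edge labels of the path \eqref{w'_p}, from which the indices $j_u$ — and hence $J$ — can be read off. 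The starting point is that $w_0 = w \lon$ is determined by $w$ alone, as noted just after \eqref{w'_p}.

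For the first stage I would argue by induction on $p$. Suppose $w_p$ has been recovered. The path $\eta$ supplies $\lfloor w_{p+1} \rfloor$, and $w_{p+1}$ lies in the coset $\lfloor w_{p+1} \rfloor W_S$. The directed path \eqref{w'_p} runs from $w_{p+1}$ to $w_p$, its edge labels lie in $\Delta^+ \setminus \Delta^+_S$, and they are strictly increasing in the order $\prec$ of \eqref{weak_reflection_order}; this increasing property is precisely Lemma \ref{remark2.11}, recorded in the construction. By the uniqueness assertion in Lemma \ref{8.5}\,(2), there is only one directed path in $\QBG(W)$ from some element of $\lfloor w_{p+1} \rfloor W_S$ to $w_p$ whose labels are increasing in $\prec$ and lie in $\Delta^+ \setminus \Delta^+_S$, and it begins at $\min(\lfloor w_{p+1} \rfloor W_S, \leq_{w_p})$. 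Since \eqref{w'_p} is such a path, it must coincide with this canonical one; in particular $w_{p+1} = \min(\lfloor w_{p+1} \rfloor W_S, \leq_{w_p})$. As $\lfloor w_{p+1} \rfloor$ is read off from $\eta$ and $w_p$ is known by induction, $w_{p+1}$ is uniquely determined, completing the induction and recovering all of $w_0, \ldots, w_s$.

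For the second stage, the same identification pins down the edge labels of \eqref{w'_p}: they are the labels of the unique increasing path from $w_{p+1}$ to $w_p$, hence fixed by the recovered endpoints. The label on the edge indexed by $u$, with $u_p + 1 \leq u \leq u_{p+1}$, is $\lon(\overline{\tilde{\beta}_{j_u}})^\lor$, which is exactly the second component of $\Phi(\tilde{\beta}_{j_u})$; its first component is $d_{j_u} = \sigma_p$, read off from $\eta$ via \eqref{2.16}. Thus both components of $\Phi(\tilde{\beta}_{j_u}) \in \mathbb{Q}_{\geq 0} \times (\Delta^+ \setminus \Delta^+_S)$ are known, and since $\Phi$ is injective this determines $\tilde{\beta}_{j_u}$, and therefore the index $j_u$, because $k \mapsto \tilde{\beta}_k$ is a bijection from $\{1,\ldots,L\}$ onto $\ptrr \cap \tra\inv \ntrr$. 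Ranging over $0 \leq p \leq s-1$ recovers the whole of $J$, hence $p_J$, so $\Xi_w$ is injective.

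The step I expect to require the most care is the identification of the path \eqref{w'_p} with the canonical label-increasing path of Lemma \ref{8.5}\,(2). This is where the choice of the reflection order \eqref{weak_reflection_order} and the monotonicity of Lemma \ref{remark2.11} are essential: without the increasing-label property, the starting vertex $w_{p+1}$ need not be the $\leq_{w_p}$-minimal element of its coset, and the inductive reconstruction of the $w_p$ would break down. Once that identification is in place, the remaining bookkeeping — inverting $\Phi$ on the recovered labels and matching the $\sigma_p$ to the $d$-values — is routine.
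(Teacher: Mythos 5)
Your proposal is correct and takes essentially the same route as the paper: an induction on the segment index $p$ starting from $w_0 = w\lon$, using the uniqueness statement of Lemma \ref{8.5}\,(2) (via the increasing-label property from Lemma \ref{remark2.11}) to pin down $w_{p+1}$ and the edge labels, and then recovering the roots $\tilde{\beta}_{j_u}$ from the pair (label, $d$-value) — your appeal to the injectivity of $\Phi$ is the same computation the paper performs with the formula $\tilde{\beta}_j = \overline{\tilde{\beta}_j} + (1-d_j)\langle\lambda_-,\overline{\tilde{\beta}_j}\rangle\widetilde{\delta}$. The only difference is presentational: you reconstruct the unique preimage of a given $\eta$, whereas the paper compares two hypothetical preimages $p_J$, $p_K$ and shows $J=K$ by the identical induction.
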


\begin{proof}
Let $J= \{ j_1 , \ldots , j_r \}$ and $K = \{ k_1 , \ldots , k_{r'} \}$ be subsets of $\{ 1, \ldots , L\}$ such that
$\Xi_w (p_J)= \Xi_w (p_K) = (v_1, \ldots, v_s ; \sigma_0 , \ldots , \sigma_s) \in \QLS(\lambda)$.
As in (\ref{2.16}), we set $0 = u_0 \leq u_1 < \cdots < u_{s} = r$ and $0 = u'_0 \leq u'_1 < \cdots < u'_{s} = r'$
in such a way that
	\begin{align*}
		 \underbrace{0 = d_{j_1} = \cdots = d_{j_{u_1}} }_{=\sigma_0}
		< \underbrace{d_{j_{u_1 +1}} = \cdots =d_{j_{u_2}}}_{=\sigma_1} < \cdots <
		\underbrace{ d_{j_{u_{s-1}+1}} = \cdots =d_{j_r} }_{=\sigma_{s-1}} <1 = \sigma_{s},\\
		 \underbrace{0 = d_{k_1} = \cdots = d_{k_{u'_1}} }_{=\sigma_0}
		< \underbrace{d_{k_{u'_1 +1}} = \cdots =d_{k_{u'_2}}}_{=\sigma_1} < \cdots <
		\underbrace{ d_{k_{u'_{s-1}+1}} = \cdots =d_{k_{r'}} }_{=\sigma_{s-1}} <1 = \sigma_{s}.
	\end{align*}
As in (\ref{w'_p}), we consider the directed paths in $\QBG (W)$
	\begin{align}\label{surj_induction}
\begin{array}{ll}
		w_p \xleftarrow{ \lon \left( \overline{ \tilde{\beta}_{j_{u_p +1}} } \right)^{\lor} }  \cdots  
		\xleftarrow{ \lon \left( \overline{ \tilde{\beta}_{j_{u_{p+1}}} } \right)^{\lor} }  w_{p+1} & \mbox{for }0\leq p\leq s-1, \\
  		y_p  \xleftarrow{ \lon \left( \overline{ \tilde{\beta}_{k_{u'_p +1}} } \right)^{\lor} }  \cdots  
		\xleftarrow{ \lon \left( \overline{ \tilde{\beta}_{k_{u'_{p+1}}} } \right)^{\lor} } y_{p+1} & \mbox{for }0\leq p\leq s-1;
\end{array}
	\end{align}
here we note that $w_0 = y_0 =w \lon$, and  $\lfloor w_{p} \rfloor = \lfloor y_{p} \rfloor = v_p$, $1 \leq p\leq s$.

Now, let $0\leq p \leq s-1$, and assume that $w_p = y_p$ and $u_p = u'_p$.
Then both of the directed paths in (\ref{surj_induction}) are directed paths from some element in $v_{p+1} W_S$ to $w_p$ in $\QBG (W)$ whose edge labels are increasing and lie in $\Delta^+ \setminus \Delta_S^+$.
Therefore, $w_{p+1} = y_{p+1} \in v_{p+1} W_S$ and $\lon \left( \overline{\tilde{\beta}_{j_i}} \right)^\lor= \lon \left( \overline{\tilde{\beta}_{k_i}}\right)^\lor$ for $u_p+1 \leq i \leq u_{p+1}$, and $u_{p+1} = u_{p+1}'$
by Lemma \ref{8.5} (2);
also,
$d_{j_i} = d_{k_i} = \sigma_p$ for $u_p+1 \leq i \leq u_{p+1}$.
It follows from the equalities
$\tilde{\beta}_{j} = \overline{\tilde{\beta}_j} + (1 -d_j)\langle \lambda_- , \overline{\tilde{\beta}_j} \rangle \widetilde{\delta}$, 
$1 \leq j \leq L$,
that
$\tilde{\beta}_{j_i} = \tilde{\beta}_{k_i}$, $u_p+1 \leq i \leq u_{p+1}$, and hence $j_i = k_i$, $u_p+1 \leq i \leq u_{p+1}$.
Thus, by induction on $p$, 
we deduce that $u_p = u_p'$ for all $0 \leq p \leq s$, and that $j_i = k_i$ for all $u_0+1 \leq i \leq u_{s}$.
Consequently, we obtain $r = u_{s}= u_{s}' =r'$, and hence 
$J= \{ j_1 , \ldots , j_r \} = \{ k_1 , \ldots , k_{r'} \}=K$. 
This proves the lemma.
\end{proof}

\begin{lem}\label{surjective}
		The map $\Xi_w :\QB(w; \tra) \rightarrow \QLS(\lambda)$
is surjective.
\end{lem}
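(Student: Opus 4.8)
The plan is to establish surjectivity by constructing, for each $\eta=(v_1,\ldots,v_s;\sigma_0,\ldots,\sigma_s)\in\QLS(\lambda)$, an explicit $p_J\in\QB(w;\tra)$ with $\Xi_w(p_J)=\eta$, obtained by running the definition of $\Xi_w$ in reverse. First I would reconstruct a sequence $w_0,w_1,\ldots,w_s\in W$ together with the connecting paths. Set $w_0\eqdef w\lon$; inductively, given $w_p$ with $\lfloor w_p\rfloor=v_p$, write $w_p=v_p\tau_p$ with $\tau_p\in W_S$ and put $w_{p+1}\eqdef\min(v_{p+1}W_S,\leq_{w_p})$, which exists by Lemma \ref{8.5}(1). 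Lemma \ref{8.5}(2) then provides the unique directed path from $w_{p+1}$ to $w_p$ in $\QBG(W)$ whose labels are $\prec$-increasing and lie in $\Delta^+\setminus\Delta^+_S$. For $1\leq p\leq s-1$, condition (C) of the $\QLS$ path---in the form (C') of Remark \ref{def_qls_rem}---gives a directed path from $v_{p+1}$ to $v_p$ in $\QBG_{\sigma_p\lambda}(W^S)$, so Lemma \ref{8.5}(3) places this segment in $\QBG_{\sigma_p\lambda}(W)$; for $p=0$ we have $\sigma_0=0$ and $\QBG_{\sigma_0\lambda}(W)=\QBG(W)$, so no extra condition is needed. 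Since $w_{p+1}\in v_{p+1}W_S$ we get $\lfloor w_{p+1}\rfloor=v_{p+1}$, closing the induction.

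Next I would convert these paths into an alcove path and read off $J$. Setting $w'_p\eqdef w_p\lon$, Lemma \ref{involution} sends each edge (with label $\lon(\overline{\tilde{\beta}})^\lor$) of the segment from $w_{p+1}$ to $w_p$ to an edge with label $-(\overline{\tilde{\beta}})^\lor$, and concatenating the reversed segments yields a directed path of the form \eqref{2.15.5} from $x_0=w'_0=w$ (note $w'_0=w_0\lon=w$, which matches $\dr(w\tra)=w$ by \eqref{equ:direction}) to $x_r$. Each label $\lon(\overline{\tilde{\beta}})^\lor\in\Delta^+\setminus\Delta^+_S$ determines $\overline{\tilde{\beta}}$; the block value $\sigma_p$ determines the degree $a=(1-\sigma_p)\langle\lambda_-,\overline{\tilde{\beta}}\rangle$ through \eqref{sigma_with_a}; and $\tilde{\beta}\eqdef\overline{\tilde{\beta}}+a\widetilde{\delta}$ lies in $\ptrr\cap\tra\inv\ntrr$ by \eqref{B}, since $0\leq\sigma_p<1$ forces $0<a\leq\langle\lambda_-,\overline{\tilde{\beta}}\rangle$ and membership of the segment in $\QBG_{\sigma_p\lambda}(W)$ forces $a\in\mathbb{Z}$. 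Thus each edge recovers an index $j_i$ with $\tilde{\beta}_{j_i}=\tilde{\beta}$, giving $J=\{j_1<\cdots<j_r\}$ and a candidate $p_J$. To check $p_J\in\QB(w;\tra)$ I would verify the two conditions of Remark \ref{affinereflectionorder}: condition (2) holds because the path was built from genuine $\QBG(W)$-edges via Lemma \ref{involution}; condition (1), that $\tilde{\beta}_{j_1}\prec'\cdots\prec'\tilde{\beta}_{j_r}$, holds across distinct blocks because the values $\sigma_0<\sigma_1<\cdots$ strictly increase, and within a block because the labels are $\prec$-increasing, which by Lemma \ref{remark2.11} and the definition of $\prec'$ is precisely the $\prec'$-increasing order.

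Finally I would verify $\Xi_w(p_J)=\eta$ by retracing the construction of $\Xi_w$: the recovered degrees give $d_{j_i}=\sigma_p$ on each block, so the grouping \eqref{2.16} reproduces the same breakpoints and the same rational numbers $\sigma_0<\cdots<\sigma_s$, and the vertices $x_{u_p}=w'_p$ return $w_p=x_{u_p}\lon$, whence $\Xi_w(p_J)$ has directions $\lfloor w_p\rfloor=v_p$; therefore $\Xi_w(p_J)=\eta$. I expect the main obstacle to be precisely the step that makes the recovered data an honest element of $\QB(w;\tra)$ rather than a formal list of roots: that condition (C) of the $\QLS$ path is exactly the hypothesis Lemma \ref{8.5}(3) requires in order to guarantee each recovered degree $a_{j_i}$ is a positive integer in the admissible range \eqref{B}, and that the within-block $\prec'$-monotonicity follows from Lemma \ref{remark2.11}. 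Combined with the injectivity of Lemma \ref{injective}, this yields the desired bijection.
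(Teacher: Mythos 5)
Your proposal is correct and takes essentially the same route as the paper's own proof: you lift the QLS directions by taking minima in the tilted Bruhat order (Lemma \ref{8.5}), reverse the resulting label-increasing paths with Lemma \ref{involution}, turn each label together with its block value $\sigma_p$ into an element of $\ptrr \cap \tra\inv \ntrr$ via \eqref{B}, check membership in $\QB(w;\tra)$ through Remark \ref{affinereflectionorder}, and then verify $\Xi_w(p_J)=\eta$, exactly as in the paper's Claims 1 and 2. Your explicit remark that lying in $\QBG_{\sigma_p\lambda}(W)$ (via Lemma \ref{8.5}(3)) is what forces the recovered degrees to be integers is a point the paper leaves implicit, but the argument is the same.
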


\begin{proof}
Take an arbitrary element
$
		\eta =(y_{1} , \ldots , y_{s} ; {\tau}_{0},\ldots, {\tau}_{s})  \in \QLS(\lambda)
$;
we set $y_0=\lfloor w \lon \rfloor \in W^S$.
We define elements $v_{p} \in W$, $0 \leq p \leq s$, by:
$v_0 =w \lon $,
and
$v_{p} = \min(y_{p}W_S , \leq_{v_{p-1}})$  for $1\leq p \leq s$.

Because there exists a  directed path in $\QBG_{\tau_p \lambda} (W^S)$ from $y_{p+1}$ to $y_{p}$ for $1\leq p \leq s-1$,
it follows from Lemma \ref{8.5}\,(2),\,(3) that
there exists a unique  directed path 
\begin{equation}\label{path3}
v_p \xleftarrow{-\lon \gamma_{p,1}} \cdots \xleftarrow{-\lon \gamma_{p,t_{p}}} v_{p+1}
\end{equation}
in $\QBG_{\tau_p \lambda} (W)$ from $v_{p+1}$ to $v_p$ whose edge labels $-\lon\gamma_{p, t_p}, \ldots, -\lon \gamma_{p, 1}$ are increasing 
in the reflection order $\prec$,
and lie in $\Delta^+ \setminus \Delta_S^+$
for $1\leq p \leq s-1$.
We remark that this is also the case for $p=0$ since
$\tau_0=0$;
if $y_1=y_0=\lfloor w \lon \rfloor$, then we set $t_0=0$.
Multiplying this directed path on the right by $\lon$, 
we get the following directed  paths by Lemma \ref{involution}:
	\begin{equation}\label{eq:recov}
		v_{p,  0} \eqdef v_{p}\lon  
		\xrightarrow{ \gamma_{p, 1}}  v_{p,  1} 
		\xrightarrow{ \gamma_{p, 2}} 
		\cdots \xrightarrow{ \gamma_{p, t_p}} v_{p+1}\lon 
		\eqdef v_{p,  t_p},
\ \
0 \leq p \leq s-1.
	\end{equation}
Concatenating these paths for $0 \leq p\leq s-1$, we obtain the following directed  path
\begin{equation*}
		w =v_{0,0} \xrightarrow{ \gamma_{0, 1} } \cdots  
		\xrightarrow{ \gamma_{0, t_0} }v_{0,  t_0}=v_{1,  0}
		\xrightarrow{ \gamma_{1, 1} } 
		\cdots 
 \xrightarrow{ \gamma_{1, t_1} }v_{1, t_1} = v_{2,  0}  \xrightarrow{ \gamma_{2, 1} }  
\cdots \xrightarrow{ \gamma_{s-1, t_{s-1}} }  v_{s-1,  t_{s-1}}  
\end{equation*}
	in $\QBG (W)$. 
	Now, for $0\leq p \leq s-1$ and $1 \leq m \leq t_p$,
	we set $d_{p,m}  \eqdef \tau_{p} \in \mathbb{Q}\cap [0,1)$, 
	$a_{p,m} \eqdef (1-d_{p,m})\langle \lambda_- , -{\gamma}^{\lor}_{p,m}   \rangle$, 
	and 
	$\widetilde{\gamma}_{p,m}  \eqdef a_{p,m} \widetilde{\delta} - \gamma_{p,m}^\lor$.
	It follows from (\ref{B}) that $\widetilde{\gamma}_{p,m}   \in 
\ptrr \cap \tra\inv \ntrr$.


\begin{nclaim}
\begin{enu}
\item
We have
\begin{equation*}
\widetilde{\gamma}_{0,1} \prec'  \cdots \prec' \widetilde{\gamma}_{0,t_0} 
\prec'  
\widetilde{\gamma}_{1,1} \prec' \cdots \prec' \widetilde{\gamma}_{s-1,t_{s-1}},
\end{equation*}
where $\prec'$ denotes the total order on $\ptrr \cap \tra\inv \ntrr$
introduced in Section $5.2${\rm;}
hence
we can choose 
$J' = \{ j'_1 , \ldots , j'_{r'} \} \subset \{ 1 , \ldots , L \}$
such that
\begin{equation*}
\left(\tilde{\beta}_{j'_1} , \ldots , \tilde{\beta}_{j'_{r'}} \right)
=
\left(  
\widetilde{\gamma}_{0,1} ,  \ldots , \widetilde{\gamma}_{0, t_0} ,
\widetilde{\gamma}_{1,1} , \ldots , \widetilde{\gamma}_{s-1,t_{s-1}}
\right) .
\end{equation*}

\item
Let $1 \leq k \leq r'$, and take $0 \leq p \leq s-1$ and $1 \leq m \leq t_p$ such that
\begin{equation*}
\left( \tilde{\beta}_{j'_1} \prec' \cdots \prec' \tilde{\beta}_{j'_{k}} \right)
=
\left(  \widetilde{\gamma}_{0,1} \prec'  \cdots \prec'  \widetilde{\gamma}_{p,m} 
 \right).
\end{equation*}
Then, 
$\dr(z_{k}) = v_{p,m}$.
Moreover, 
$\dr(z_{k-1})\xrightarrow{-\left(\overline{\tilde{\beta}_{j'_k} } \right)^\lor} \dr(z_k)$ is a directed edge of $\QBG (W)$.
\end{enu}
\end{nclaim}

\noindent $Proof \ of \ Claim \ 1.$
(1)
It suffices to show the following:

(i)
for $0 \leq p \leq s-1$ and $1\leq m < t_p$,
we have
$  \widetilde{\gamma}_{p,m}\prec' \widetilde{\gamma}_{p,m+1}$;

(ii)
for $0 \leq p \leq s-2$,
we have
$\widetilde{\gamma}_{p,t_p} \prec' \widetilde{\gamma}_{p+1,1}$.

(i)
Because
$\frac{\langle \lambda_- , -\gamma_{p,m}^{\lor}\rangle - a_{p,m}}{\langle \lambda_- , -\gamma_{p,m}^{\lor}\rangle}=d_{p,m}$ and
$\frac{\langle \lambda_- , -\gamma_{p,m+1}^{\lor}\rangle - a_{p,m+1}}{\langle \lambda_- , -\gamma_{p,m+1}^{\lor}\rangle}=d_{p,m+1}$,
we have
\begin{align*}
\Phi (\widetilde{\gamma}_{p,m}) &=
 (d_{p,m}, - \lon \gamma_{p,m} ), \\
\Phi (\widetilde{\gamma}_{p,m+1}) &=
 (d_{p,m+1}, - \lon \gamma_{p,m+1} ).
\end{align*}
Therefore, the first component of $ \Phi (\widetilde{\gamma}_{p,m})$ is equal to that of $\Phi (\widetilde{\gamma}_{p,m+1})  $
since $d_{p,m} =1 - \tau_p =d_{p,m+1}$.
Since $- \lon \gamma_{p,m} \succ -\lon \gamma_{p,m+1}$,
we deduce that $ \Phi (\widetilde{\gamma}_{p,m}) < \Phi (\widetilde{\gamma}_{p,m+1})  $.
This implies that $  \widetilde{\gamma}_{p,m}\prec' \widetilde{\gamma}_{p,m+1}$.

(ii)
The proof of (ii) is similar to that of (i).
The first components of $\Phi (\widetilde{\gamma}_{p,t_p}) $ and $\Phi (\widetilde{\gamma}_{p+1,1}) $ are $d_{p,t_p}$ and $d_{p+1,1}$, respectively.
Since $d_{p,t_p}= \tau_p < \tau_{p+1}= d_{p+1,1}$,
we have $\Phi (\widetilde{\gamma}_{p,t_p}) < \Phi (\widetilde{\gamma}_{p+1,1}) $.
This implies that
$\widetilde{\gamma}_{p,t_p} \prec' \widetilde{\gamma}_{p+1,1}$.

(2)
We proceed 
by induction on $k$.
If $\tilde{\beta}_{j'_1} = \widetilde{\gamma}_{0,1}$, i.e., $y_1 \neq \lfloor w \lon \rfloor$,
then
we have
$\dr (z_1) = \dr (z_0) s_{-\overline{\tilde{\beta}_{j'_1}}} =v_{0,0}  s_{\gamma_{0,1}} = v_{0,1}$,
since $\dr (z_0) = \dr(w \tra)= w = v_{0,0}$.
If $\tilde{\beta}_{j'_1} = \widetilde{\gamma}_{1,1}$, i.e., $y_1 = \lfloor w \lon \rfloor$ and $t_0 =0$,
then
we have
$\dr (z_1) = \dr (z_0) s_{-\overline{\tilde{\beta}_{j'_1}}} = w s_{\gamma_{1,1}} =
v_{1,0} s_{\gamma_{1,1}} = v_{1,1}$,
since $\dr (z_0) = \dr(w \tra) = w = v_{1,0}$.
Hence the assertion holds in the case $k=1$.

Assume that $\dr(z_{k-1}) = v_{p,m-1}$
for $0 \lneqq m \leq t_p$;
here we remark that $v_{p,m}$ is the successor of $v_{p,m-1}$ in the  directed path \eqref{eq:recov}.
Therefore, we see that
\begin{align*}
\dr(z_{k}) 
&= \dr(z_{k-1}) s_{-\overline{ \tilde{\beta}_{j'_k} } }
= v_{p,m-1}s_{\gamma_{p,m}}
=v_{p,m} \\
&\ \hspace{55mm}
\mbox{(since $v_{p,m-1}\xrightarrow{\gamma_{p,m}}v_{p,m}$ is a directed edge in \eqref{eq:recov})}.
\end{align*}
Also, since \eqref{eq:recov} is a directed path in $\QBG (W)$, 
$ v_{p,m-1} = \dr(z_{k-1})\xrightarrow{-\left(\overline{ \tilde{\beta}_{j'_k} } \right)^\lor} \dr(z_k) = v_{p,m} $ is a directed edge of $\QBG (W)$.
\bqed

Since $J' = \{ j'_1 , \ldots , j'_{r'} \} \subset \{ 1 , \ldots , L \}$,
we can define an element
$
p_{J'} $
by\\
$ \left( w \tra = z_0 ,  z_{1} , \ldots , z_{r'}
; \tilde{\beta}_{j'_1} , \tilde{\beta}_{j'_2}, \ldots , \tilde{\beta}_{j'_{r'}} \right) 
$,
where
$z_{0}=w \tra$, $z_{k}=z_{k-1} s_{\tilde{\beta}_{j'_{k}}}$ for $1\leq k \leq r'$;
it follows from Remark \ref{affinereflectionorder} and Claim 1 that
 $p_{J'} \in \QB(w; \tra)$.

\begin{nclaim}
$\Xi_w (p_{J'})=\eta$.
\end{nclaim}

\noindent $Proof \ of \ Claim \ 2.$
In the following description of $ p_{J'}$,
we employ the notation  $u_p$, $\sigma_p$, $w'_p$, and $w_p$, 
$ 0 \leq p \leq s$,
used  in the definition of $\Xi_w (p_{J})$.

For $1 \leq k \leq r'$, if we set $\tilde{\beta}_{j'_k} \eqdef \widetilde{\gamma}_{p,m}$,
then we have
\begin{equation*}
d_{j'_k}=
1 + \frac{\deg(\tilde{\beta}_{j'_k})}{\langle \lambda_- , -\overline{\tilde{\beta}_{j'_k} }^\lor \rangle}
=
1 + 
\frac{\deg(\widetilde{\gamma}_{p,m})}{\langle \lambda_- , -\overline{\widetilde{\gamma}_{p,m}}^\lor \rangle}
=
1 + \frac{a_{p,m}}{\langle \lambda_- , \gamma_{p,m}^\lor \rangle}
=
d_{p,m}
.
\end{equation*}
Therefore,
the sequence (\ref{2.16}) determined by $p_{J'}$ is 
\begin{equation}\label{2.17}
		\underbrace{ 0 = d_{0,1} = \cdots =d_{0,t_0} }_{=\tau_0}
		< \underbrace{d_{1,1} = \cdots =d_{1, t_1}}_{=\tau_1}
		 < \cdots <
		\underbrace{ d_{s-1,1} = \cdots =d_{s-1,t_{s-1}} }_{=\tau_{s-1}} <1 = \tau_{s} = \sigma_s.
	\end{equation}
Because the sequence (\ref{2.17}) of rational numbers is just the sequence (\ref{2.16}) for 
$\Theta ( \eta ) = p_{J'}$,
we deduce that 
$u_{p+1} - u_p = t_p$ for $0 \leq p \leq s-1$,
$\tilde{\beta}_{j'_{u_p + k}} = \widetilde{\gamma}_{p,k}$ for $0 \leq p \leq s-1$, $1 \leq k \leq u_{p+1} - u_p$,
and
$\sigma_p = \tau_{p}$ for $0 \leq p \leq s$.
From these, we see that
\begin{align*}
w'_p = \dr(z_{u_p})=
v_{u_{p-1}, t_{p-1}}
=v_{p,0} 
 \hspace{5mm}
\mbox{(since $v_{p,m-1}\xrightarrow{\gamma_{p,m}}v_{p,m}$ is a directed edge in \eqref{eq:recov})}.
\end{align*}
 and $w_p = v_{p,0}\lon =v_{p}$.
Since $\lfloor w_p \rfloor = \lfloor v_{p} \rfloor = y_{p}$,
we conclude that \\
$\Xi_w (p_{J'}) = (\lfloor w_1 \rfloor ,\ldots , \lfloor w_s \rfloor ; \sigma_0, \ldots , \sigma_s)=(y_1 ,\ldots, y_s ; \tau_0 ,\ldots , \tau_s)= \eta$, as desired.
\bqed

This completes the proof of the lemma.
\end{proof}

By Lemmas \ref{injective} and \ref{surjective}, we obtain the following proposition.
\begin{prop}\label{bijective}
The map $\Xi_w$ is bijective.
In particular, the cardinality of the set $\QB(w; \tra)$ is independent of $w \in W$.
\end{prop}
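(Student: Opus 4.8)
The plan is to obtain the proposition as an immediate formal consequence of the two lemmas just established. First I would invoke Lemma \ref{injective}, which gives injectivity of $\Xi_w$, together with Lemma \ref{surjective}, which gives surjectivity; since a map that is simultaneously injective and surjective is a bijection, it follows at once that $\Xi_w : \QB(w; \tra) \to \QLS(\lambda)$ is bijective. No additional argument is required for the first assertion.

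For the ``in particular'' statement, I would read off the equality of cardinalities $\# \QB(w; \tra) = \# \QLS(\lambda)$ furnished by the bijection $\Xi_w$, and then observe that the target set is insensitive to $w$. Indeed, the notion of a QLS path of shape $\lambda$ (Definition \ref{def_qls}) depends only on $\lambda$, equivalently on $S = S_\lambda$, and not on the choice of $w \in W$. Hence for any $w, w' \in W$ one has $\# \QB(w; \tra) = \# \QLS(\lambda) = \# \QB(w'; \tra)$, which is precisely the claimed independence of $\# \QB(w; \tra)$ on $w$.

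I do not expect any genuine obstacle at the level of the proposition itself: all of the substance has already been absorbed into the two lemmas. In particular, the delicate points---that the label-increasing paths in $\QBG(W)$ built from $p_J$ descend through the floor map $\lfloor\,\cdot\,\rfloor$ to genuine paths in the parabolic graph $\QBG(W^S)$, and that the degrees $d_k$ align with the rational parameters $\sigma_p$ of a QLS path---are handled in Lemma \ref{injective} (via the uniqueness in Lemma \ref{8.5}\,(2)) and in the reconstruction carried out in Claims 1 and 2 inside the proof of Lemma \ref{surjective}. Accordingly, I would present the proposition as a short corollary combining these two lemmas, with the cardinality statement flowing from the $w$-independence of the codomain $\QLS(\lambda)$.
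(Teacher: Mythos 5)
Your proposal matches the paper exactly: the paper deduces the proposition immediately from Lemma \ref{injective} (injectivity) and Lemma \ref{surjective} (surjectivity), and the cardinality statement follows since the codomain $\QLS(\lambda)$ does not depend on $w$. No gaps; this is the intended argument.
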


\begin{rem}
In the proof of Lemma \ref{surjective},
we showed that for $\eta \in \QLS(\lambda)$, there exists a unique $J \subset \{ 1, \ldots , L\}$
such that $p_J \in \QB(w, \tra)$ and $\Xi_w(p_J)=\eta$;
this assignment gives
 the inverse map $\Xi\inv_w:\QLS(\lambda) \rightarrow \QB(w; \tra)$ of $\Xi_w$.
\end{rem}

Recall from \eqref{eq:dfn_deg} and \eqref{eq:dfn_wt} that $\deg(\beta)$ is defined by: $\beta= \overline{\beta} + \deg(\beta)\widetilde{\delta}$ for $\beta \in \mathfrak{h}_\mathbb{R} \oplus \mathbb{R}\widetilde{\delta}$, and $\wt(u) \in P$ and $\dr(u) \in W$ are defined by: $u=t(\wt (u))\dr(u)$ for $u\in W_\ext = t(P)\rtimes W$.
\begin{prop}\label{qls_qb}
The bijection $\Xi_w:\QB(w; \tra) \rightarrow \QLS(\lambda)$ has the following properties{\rm:}
\begin{enu}
\item
		$\wt (\ed (p_{J})) = \wt(\Xi_w(p_{J}) )${\rm;}
\item
		$\deg (\qwt(p_{J})) = \Dgw (\Xi_w(p_{J}))$.
\end{enu}
\end{prop}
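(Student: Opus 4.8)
The plan is to prove the two identities separately; in each case I expand the left-hand quantity as a sum over the edges of the directed path \eqref{2.15.5} and then regroup it according to the blocks of \eqref{2.16} that define $\eta = \Xi_w(p_J)$. Throughout I abbreviate $x_k = \dr(z_k)$, write $\gamma_k \eqdef -(\overline{\tilde{\beta}_{j_k}})^\lor \in \Delta^+$ for the label of the edge $x_{k-1} \to x_k$ in \eqref{2.15.5}, and $a_{j_k} = \deg(\tilde{\beta}_{j_k}) > 0$. I will also use that $\lfloor w_p \rfloor \lambda = w_p \lambda = x_{u_p} \lon \lambda = x_{u_p} \lambda_-$, together with $w_0 = x_0 \lon = w\lon$ and $\sigma_0 = 0$, $\sigma_s = 1$.

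For part (1) I first compute the translation part of $z_k$ recursively. Writing $\tilde{\beta}_{j_k} = \overline{\tilde{\beta}_{j_k}} + a_{j_k}\widetilde{\delta}$ with $\overline{\tilde{\beta}_{j_k}} = -\gamma_k^\lor$, the affine reflection factors in $\widetilde{W}_{\ext} = t(P) \rtimes W$ as $s_{\tilde{\beta}_{j_k}} = t(a_{j_k}\gamma_k)\, s_{\gamma_k}$. Substituting $z_{k-1} = t(\wt(z_{k-1}))\, x_{k-1}$, pushing the translation past $x_{k-1}$, and using $x_{k-1} s_{\gamma_k} = x_k$ yields
\begin{equation*}
\wt(z_k) = \wt(z_{k-1}) + a_{j_k}\, x_{k-1}\gamma_k, \qquad \wt(z_0) = w\lambda_-,
\end{equation*}
valid for every edge irrespective of its type. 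Next, from $x_k\lambda_- = x_{k-1}\lambda_- - \pair{\lambda_-}{\gamma_k^\lor}\, x_{k-1}\gamma_k$ and $\pair{\lambda_-}{\gamma_k^\lor} = -\pair{\lambda_-}{\overline{\tilde{\beta}_{j_k}}} \neq 0$, the definition \eqref{2.15} of $d_{j_k}$ lets me rewrite each increment as $a_{j_k} x_{k-1}\gamma_k = (1 - d_{j_k})(x_k\lambda_- - x_{k-1}\lambda_-)$. Since $d_{j_k}$ is constant (equal to $\sigma_p$) on each block $u_p < k \le u_{p+1}$ by \eqref{2.16}, summing by parts collapses all the intermediate $x_k\lambda_-$ and leaves only the block endpoints $x_{u_p}\lambda_-$; with $\sigma_0 = 0$, $\sigma_s = 1$ this gives $\wt(z_r) = \sum_{p=1}^s (\sigma_p - \sigma_{p-1}) x_{u_p}\lambda_- = \wt(\eta)$, proving (1).

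For part (2), by definition $\deg(\qwt(p_J)) = \sum_{j \in J^-} a_j$, summed over the quantum edges of \eqref{2.15.5}. Partitioning $J^-$ by blocks, for an index $j_u$ lying in block $p$ equation \eqref{sigma_with_a} gives $a_{j_u} = (1 - \sigma_p)\pair{\lambda_-}{\overline{\tilde{\beta}_{j_u}}}$. By Lemma \ref{involution} the edge $x_{u-1} \to x_u$ is quantum if and only if the corresponding edge of the $\lon$-shifted path \eqref{w'_p} is quantum, and the latter contributes the coroot $\lon\overline{\tilde{\beta}_{j_u}}$ to its weight; since \eqref{w'_p} is a shortest path (Proposition \ref{shellability}) and $\pair{\lambda}{\lon\overline{\tilde{\beta}_{j_u}}} = \pair{\lambda_-}{\overline{\tilde{\beta}_{j_u}}}$, summing over the quantum indices in block $p$ yields $\wt_\lambda(w_{p+1} \Rightarrow w_p)$. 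Hence $\deg(\qwt(p_J)) = \sum_{p=0}^{s-1}(1 - \sigma_p)\wt_\lambda(w_{p+1} \Rightarrow w_p)$, and \eqref{eq:QBG_and_parabolic_QBG_weight} together with $w_p = x_{u_p}\lon$ and $w_0 = w\lon$ identifies this with $\sum_{p=0}^{s-1}(1-\sigma_p)\wt_\lambda(\lfloor w_{p+1}\rfloor \Rightarrow \lfloor w_p\rfloor) = \Dgw(\eta)$; note that the $p = 0$ term matches precisely because the basepoint of $\Dgw = \Dg^{w\lon}$ is $w\lon = w_0$.

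The step I expect to be the main obstacle is the affine-reflection factorization $s_{\tilde{\beta}_{j_k}} = t(a_{j_k}\gamma_k) s_{\gamma_k}$ at the start of part (1): it requires keeping the duality between $\Fg$ and $\widetilde{\Fg}$ straight, so that $\overline{\tilde{\beta}_{j_k}} \in \widetilde{\Delta}$ is a coroot while $\gamma_k = -(\overline{\tilde{\beta}_{j_k}})^\lor$ is an honest positive root of $\Fg$, and getting the sign of the translation part correct. Once this recursion and the conversion via \eqref{2.15} are in place, both identities reduce to the same summation-by-parts bookkeeping against the block structure \eqref{2.16}; the degree identity is the easier of the two, since \eqref{sigma_with_a} and Lemma \ref{involution} already supply the key ingredients.
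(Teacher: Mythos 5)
Your proof is correct, but it takes a genuinely different route from the paper's. The paper proves Proposition \ref{qls_qb} by induction on $\#J$: it removes the largest index $j_r$, sets $K = J\setminus\{j_r\}$, describes how $\Xi_w(p_J)$ is obtained from $\Xi_w(p_K)$ (either by modifying the last directing element or by inserting a new pair $(\lfloor w_s s_{\lon\overline{\tilde{\beta}_{j_r}}}\rfloor, d_{j_r})$, according to whether $d_{j_r}=d_{j_{r-1}}$ or $d_{j_r}>d_{j_{r-1}}$), and then reduces both identities to two incremental claims --- that $\wt$ changes by $a_{j_r}w_s\lon\left(-\overline{\tilde{\beta}_{j_r}}\right)^\lor$ and $\Dgw$, $\deg(\qwt)$ change by $\chi\deg(\tilde{\beta}_{j_r})$ on both sides --- whose proofs are not given but imported from Claims 1 and 2 of \cite[Proposition 3.3.6]{NNS}. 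You instead compute both sides in closed form: for (1), the factorization $s_{\tilde{\beta}_{j_k}} = t(a_{j_k}\gamma_k)s_{\gamma_k}$ (which is indeed correct, and is the same identity that drives the paper's Claim 2 in \cite{NNS}) gives the recursion $\wt(z_k)=\wt(z_{k-1})+a_{j_k}x_{k-1}\gamma_k$, and the conversion $a_{j_k}x_{k-1}\gamma_k=(1-d_{j_k})(x_k\lambda_- - x_{k-1}\lambda_-)$ via \eqref{2.15} plus Abel summation against the blocks \eqref{2.16} yields $\wt(\ed(p_J))=\sum_{p}(\sigma_p-\sigma_{p-1})x_{u_p}\lambda_-=\wt(\eta)$; for (2), you partition $J^-$ by blocks, use \eqref{sigma_with_a}, and identify each block sum with $(1-\sigma_p)\wt_\lambda(w_{p+1}\Rightarrow w_p)$ via Lemma \ref{involution} and the fact (asserted in the paper after \eqref{w'_p}, via Lemma \ref{remark2.11} and Proposition \ref{shellability}) that \eqref{w'_p} is label-increasing and hence shortest, finishing with \eqref{eq:QBG_and_parabolic_QBG_weight} and the observation that the basepoint of $\Dgw=\Dg^{w\lon}$ is exactly $w_0=w\lon$. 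What your approach buys is self-containedness --- it needs no appeal to \cite{NNS} and avoids the two-case analysis of the inductive step --- and it makes both identities transparent as a single summation-by-parts against the block structure; what the paper's approach buys is economy (reusing verbatim the incremental claims already proved in \cite{NNS} for the case $w=e$) and an explicit edge-by-edge description of how weight and degree change, which is the same information you obtain in aggregate.
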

\setcounter{nclaim}{0} 

\begin{proof}
We proceed by induction on $ \#J$.

If $J=\emptyset$, it is obvious that $\deg (\qwt(p_{J})) = \Dgw (\Xi_w (p_{J}))=0$ and 
${\wt}(\ed (p_{J})) = \wt(\Xi_w(p_{J}) )= w \lambda_-$,
since
$\Xi_w(p_{J})=(\lfloor w v(\lambda_-)\rfloor ; 0,1)$.
	
Let	$J=\{ j_1 <j_2 <\cdots <j_r\}$, and set
	$K \eqdef J\setminus \{ j_r \}$;
	assume that $\Xi_w (p_{K})$ is of the form: 
$\Xi_w (p_{K}) =
  (\lfloor w_1 \rfloor  ,\ldots , \lfloor w_s \rfloor ; \sigma_0 ,\ldots, \sigma_s)  $.
In the following, we employ the notation $w_p$, $0\leq p \leq s$, used in the definition of the map $\Xi_w$. 
Note that
$\dr(\ed(p_K))=w_s \lon$
by the definition of $\Xi_w$.
Also, note that
if $d_{j_{r}}=d_{j_{r-1}}={\sigma}_{s-1}$, 
then $\{ d_{j_1} \leq \cdots \leq d_{j_{r-1}}\leq d_{j_r} \}
=
\{ d_{j_1} \leq \cdots \leq d_{j_{r-1}} \} $,
and
that
if $d_{j_{r}}>d_{j_{r-1}}={\sigma}_{s-1}$,
then
$\{ d_{j_1} \leq \cdots \leq d_{j_{r-1}}\leq d_{j_r} \}
= 
\{ d_{j_1} \leq \cdots \leq d_{j_{r-1}}< d_{j_r} \}
$.
From these, we deduce that
	\begin{align*}
	\Xi_w (p_{J})=
\begin{cases}
(\lfloor w_1 \rfloor ,\ldots,  \lfloor w_{s-1}\rfloor , \lfloor w_{s} s_{\lon \overline {\tilde{\beta}_{j_r} }  }  \rfloor ; \sigma_0, \ldots, \sigma_{s-1}, \sigma_s)
& \mbox{if } d_{j_{r}}=d_{j_{r-1}} ={\sigma}_{s-1}, \\
(\lfloor w_1 \rfloor ,\ldots , \lfloor w_{s-1} \rfloor ,  \lfloor w_{s} \rfloor ,\lfloor  w_{s}s_{\lon \overline {\tilde{\beta}_{j_r }} }  \rfloor ; \sigma_0, \ldots, \sigma_{s-1}, d_{j_r}, \sigma_{s})
& \mbox{if } d_{j_{r}}>d_{j_{r-1}} ={\sigma}_{s-1}.
\end{cases}
	\end{align*}

For the induction step, it suffices to show the following claims.

\begin{nclaim}
\
\begin{enu}
\item
We have
	\begin{equation*}
		\wt ( \Xi_w (p_{J}) ) 
		= \wt ( \Xi_w (p_{K}) )+ 
		a_{j_{r}}w_{s}\lon \left( -\overline {\tilde{\beta}_{j_r } } \right)^\lor.
	\end{equation*}

\item
We have
\begin{equation*}
		\Dgw ( \Xi_w (p_{J}) ) 
		= \Dgw ( \Xi_w (p_{K}) )+ \chi \deg(\tilde{\beta}_{j_r} ),
	\end{equation*}
where
$\chi \eqdef 0$ {\rm(}resp., $\chi \eqdef 1${\rm)} 
if $w_{s}s_{\lon \overline {\tilde{\beta}_{j_r } }  } \rightarrow w_{s}$ is a Bruhat {\rm(}resp., quantum{\rm)} edge of $\QBG(W)$.
\end{enu}
\end{nclaim}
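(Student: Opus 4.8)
The plan is to prove (1) and (2) simultaneously, handling the two cases $d_{j_r}=d_{j_{r-1}}=\sigma_{s-1}$ and $d_{j_r}>d_{j_{r-1}}=\sigma_{s-1}$ in parallel; the point is that in either case the passage from $p_K$ to $p_J$ alters only a single edge of the underlying $\QBG(W)$-path, whose contributions to $\wt$ and to $\Dgw$ turn out to be identical in both cases. First I would record the structural identity underlying everything. Writing $x_k=\dr(z_k)$, the path \eqref{2.15.5} attached to $p_J$ is obtained from that of $p_K$ by appending the single edge $x_{r-1}\xrightarrow{-(\overline{\tilde{\beta}_{j_r}})^\lor} x_r$, so $x_r=x_{r-1}s_{\overline{\tilde{\beta}_{j_r}}}$. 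Since the $p_K$-path ends at $x_{r-1}$, we have $w_s=x_{r-1}\lon$, and using $\lon s_\gamma \lon = s_{\lon\gamma}$ we obtain $x_r\lon = w_s s_{\lon\overline{\tilde{\beta}_{j_r}}}$. Thus the new last element of $\Xi_w(p_J)$ is $\lfloor w_s s_{\lon\overline{\tilde{\beta}_{j_r}}}\rfloor$, exactly as in the case description, and applying Lemma \ref{involution} to the appended edge shows that $w_s s_{\lon\overline{\tilde{\beta}_{j_r}}}\xrightarrow{\lon(\overline{\tilde{\beta}_{j_r}})^\lor} w_s$ is a directed edge of $\QBG(W)$ of the same type (Bruhat or quantum) as $x_{r-1}\to x_r$; this is precisely the edge that defines $\chi$.

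For part (1) I would compute $\wt(\Xi_w(p_J))-\wt(\Xi_w(p_K))$ directly from $\wt(\eta)=\sum_i(\sigma_i-\sigma_{i-1})w_i\lambda$. In Case 1 the two weights agree in all terms but the last, whose coefficient is $\sigma_s-\sigma_{s-1}=1-d_{j_r}$; in Case 2 the freshly inserted breakpoint $d_{j_r}$ makes the sum telescope to the same expression. In both cases the difference collapses to $(1-d_{j_r})\bigl(w_s s_{\lon\overline{\tilde{\beta}_{j_r}}}\lambda-w_s\lambda\bigr)$. Evaluating the reflection gives $s_{\lon\overline{\tilde{\beta}_{j_r}}}\lambda-\lambda=-\langle\lambda,\lon\overline{\tilde{\beta}_{j_r}}\rangle\,\lon(\overline{\tilde{\beta}_{j_r}})^\lor=-\langle\lambda_-,\overline{\tilde{\beta}_{j_r}}\rangle\,\lon(\overline{\tilde{\beta}_{j_r}})^\lor$, and the arithmetic identity $(1-d_{j_r})\langle\lambda_-,\overline{\tilde{\beta}_{j_r}}\rangle=a_{j_r}=\deg(\tilde{\beta}_{j_r})$, read off from \eqref{2.15}, turns this into $a_{j_r}\,w_s\lon(-\overline{\tilde{\beta}_{j_r}})^\lor$, as required.

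For part (2) the essential input is that $\wt_\lambda$ is additive along label-increasing (hence, by Proposition \ref{shellability}, shortest) paths. The terminal block of the path~(\ref{w'_p}) for $p_K$ already has $\prec$-increasing labels, and by Lemma \ref{remark2.11} the new label $\lon(\overline{\tilde{\beta}_{j_r}})^\lor$—borne by the maximal index $j_r$ sharing the value $\sigma_{s-1}$ in Case 1, or forming its own block in Case 2—is $\prec$-smaller than all of them. Hence prepending the edge $w_s s_{\lon\overline{\tilde{\beta}_{j_r}}}\to w_s$ again yields a label-increasing, and therefore shortest, path, so $\wt_\lambda$ adds. Since a single edge contributes $0$ to $\wt_\lambda$ when Bruhat and $\langle\lambda_-,\overline{\tilde{\beta}_{j_r}}\rangle=\langle\lambda,(\lon(\overline{\tilde{\beta}_{j_r}})^\lor)^\lor\rangle$ when quantum, the only altered $\Dgw$-summand (the one with coefficient $1-\sigma_{s-1}$ in Case 1, or the inserted one with coefficient $1-d_{j_r}$ in Case 2) changes by $(1-d_{j_r})\chi\langle\lambda_-,\overline{\tilde{\beta}_{j_r}}\rangle=\chi\deg(\tilde{\beta}_{j_r})$; all remaining summands, including the $i=0$ term built from $w\lon$, are untouched. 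Throughout I would pass freely between $\wt_\lambda$ computed in $\QBG(W)$ and in $\QBG(W^S)$ via \eqref{eq:QBG_and_parabolic_QBG_weight}.

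The main obstacle is organizational rather than conceptual: one must pin down exactly which summands of $\wt$ and of $\Dgw$ survive the passage from $p_K$ to $p_J$ in each of the two cases, and in particular verify that appending the new edge keeps the path label-increasing, so that the additivity of $\wt_\lambda$ is legitimately available. Once that bookkeeping is settled, the only genuinely non-formal ingredients are the arithmetic identity $(1-d_{j_r})\langle\lambda_-,\overline{\tilde{\beta}_{j_r}}\rangle=a_{j_r}$ and the conjugation identity $\lon s_\gamma\lon=s_{\lon\gamma}$, both of which are immediate, and they close the argument.
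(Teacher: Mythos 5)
Your proof is correct, and it is essentially the argument the paper intends: the paper does not write this proof out at all, but simply defers to \cite[Proposition 3.3.6]{NNS}, whose Claims 1 and 2 are established by exactly this computation. Your ingredients --- the two-case telescoping of $\wt(\eta)=\sum_i(\sigma_i-\sigma_{i-1})w_i\lambda$, transporting the appended edge by Lemma \ref{involution}, using Lemma \ref{remark2.11} together with Proposition \ref{shellability} to see that prepending the new edge keeps the path label-increasing (hence shortest, so that $\wt_\lambda$ is additive), and the identity $(1-d_{j_r})\langle\lambda_-,\overline{\tilde{\beta}_{j_r}}\rangle=a_{j_r}=\deg(\tilde{\beta}_{j_r})$ --- are precisely the tools the paper sets up in Section 5 for this purpose, so your writeup supplies the details the paper leaves to the reference.
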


\begin{nclaim}
\
\begin{enu}
\item
We have
	\begin{equation*}
		\wt ( \ed (p_{J}) ) 
		= \wt ( \ed (p_{K}) )+ 
		a_{j_{r}}w_{s}\lon \left( -\overline {\tilde{\beta}_{j_r } } \right)^\lor.
	\end{equation*}

\item
We have
	\begin{equation*}
		\deg ( \qwt (p_{J}) )
		= \deg ( \qwt (p_{K}) ) + \chi \deg(\tilde{\beta}_{j_r}).
	\end{equation*}
\end{enu}
\end{nclaim}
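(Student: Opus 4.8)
The plan is to prove both parts of the Claim by isolating the single reflection $s_{\tilde{\beta}_{j_r}}$ that is appended when passing from $p_K$ to $p_J$. Since $K = J \setminus \{j_r\}$, the two alcove paths share the initial elements $z_0, \ldots, z_{r-1}$, so $\ed(p_K) = z_{r-1}$ and $\ed(p_J) = z_r = z_{r-1} s_{\tilde{\beta}_{j_r}}$; moreover, writing $x_k \eqdef \dr(z_k)$, the only edge that can differ between the two quantum alcove paths is the last one, $x_{r-1} \xrightarrow{-\left(\overline{\tilde{\beta}_{j_r}}\right)^\lor} x_r$. I will freely use the two facts recorded just before the Claim: $\dr(\ed(p_K)) = x_{r-1} = w_s \lon$, and $a_{j_r} = \deg(\tilde{\beta}_{j_r})$.

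For part (1), I would compute in $\widetilde{W}_{\ext} = t(P) \rtimes W$ by means of the multiplication rule $\wt(uv) = \wt(u) + \dr(u)\wt(v)$, which is immediate from the semidirect product structure together with $v\,t(\nu)\,v\inv = t(v\nu)$. Applied to $u = z_{r-1}$ and $v = s_{\tilde{\beta}_{j_r}}$, this gives
\[
\wt(\ed(p_J)) = \wt(z_{r-1}) + \dr(z_{r-1})\,\wt(s_{\tilde{\beta}_{j_r}}) = \wt(\ed(p_K)) + w_s\lon\,\wt(s_{\tilde{\beta}_{j_r}}).
\]
The key sub-step is the translation-part formula for an affine reflection: for a positive real root $\tilde{\beta} = \overline{\tilde{\beta}} + \deg(\tilde{\beta})\widetilde{\delta} \in \ptrr$ one has $\wt(s_{\tilde{\beta}}) = \deg(\tilde{\beta})\left(-\overline{\tilde{\beta}}\right)^\lor$. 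I would establish this by writing $s_{\tilde{\beta}} = t(\nu) s_{\overline{\tilde{\beta}}}$ and determining $\nu$ from the requirement that $s_{\tilde{\beta}}$ fix the affine hyperplane attached to $\tilde{\beta}$; the normalization is pinned down by the model case $s_0 = t(\varphi) s_\varphi$, where $\overline{\alpha_0^\lor} = -\varphi^\lor$ and $\deg(\alpha_0^\lor) = 1$ indeed yield $\wt(s_0) = \varphi = \left(-\overline{\alpha_0^\lor}\right)^\lor$. Specializing to $\tilde{\beta} = \tilde{\beta}_{j_r}$ with $\deg(\tilde{\beta}_{j_r}) = a_{j_r}$ produces exactly $\wt(\ed(p_J)) = \wt(\ed(p_K)) + a_{j_r} w_s\lon\left(-\overline{\tilde{\beta}_{j_r}}\right)^\lor$.

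For part (2), I start from $\qwt(p_J) = \sum_{j \in J^-} \tilde{\beta}_j$. Because $p_J$ and $p_K$ agree through $z_{r-1}$, the set $J^-$ equals $K^-$ together with $j_r$ precisely when the last edge $x_{r-1} \xrightarrow{-\left(\overline{\tilde{\beta}_{j_r}}\right)^\lor} x_r$ is a quantum edge of $\QBG(W)$; hence $\deg(\qwt(p_J)) = \deg(\qwt(p_K)) + \left[\,j_r \in J^-\,\right]\deg(\tilde{\beta}_{j_r})$, and it remains to identify the bracket with $\chi$. To do this I apply Lemma~\ref{involution}: the edge $x_{r-1} \xrightarrow{-\left(\overline{\tilde{\beta}_{j_r}}\right)^\lor} x_r$ is quantum if and only if $x_r\lon \xrightarrow{\lon\left(\overline{\tilde{\beta}_{j_r}}\right)^\lor} x_{r-1}\lon$ is quantum. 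Using $x_{r-1}\lon = w_s$, the identity $x_r = x_{r-1} s_{\overline{\tilde{\beta}_{j_r}}}$ (from $\dr(s_{\tilde{\beta}_{j_r}}) = s_{\overline{\tilde{\beta}_{j_r}}}$), and the conjugation rule $\lon s_\gamma \lon = s_{\lon\gamma}$, I rewrite $x_r\lon = w_s s_{\lon\overline{\tilde{\beta}_{j_r}}}$, so the edge in question is exactly $w_s s_{\lon\overline{\tilde{\beta}_{j_r}}} \xrightarrow{\lon\left(\overline{\tilde{\beta}_{j_r}}\right)^\lor} w_s$. Therefore $j_r \in J^-$ holds precisely when this edge is quantum, i.e.\ when $\chi = 1$, which is part (2).

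The main obstacle I anticipate is bookkeeping rather than conceptual. One must handle the translation-part formula $\wt(s_{\tilde{\beta}}) = \deg(\tilde{\beta})\left(-\overline{\tilde{\beta}}\right)^\lor$ with the correct conventions, keeping in mind that $\tilde{\beta}$ lives in the affinization of $\widetilde{\Fg}$ (whose roots are the coroots of $\Fg$), so that $\left(-\overline{\tilde{\beta}}\right)^\lor$ is a genuine positive root of $\Fg$; this is what makes the right-hand side of part (1) a weight in $P$. The other delicate point is keeping every $\lon$-conjugation and every sign straight in part (2) as one passes through Lemma~\ref{involution}: the identifications $x_{r-1}\lon = w_s$ and $x_r\lon = w_s s_{\lon\overline{\tilde{\beta}_{j_r}}}$ must be matched exactly against the direction and the orientation of the edge appearing in the definition of $\chi$.
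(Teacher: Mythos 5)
Your argument is correct: the semidirect-product rule $\wt(uv)=\wt(u)+\dr(u)\wt(v)$, the translation-part formula $\wt(s_{\tilde{\beta}})=\deg(\tilde{\beta})\left(-\overline{\tilde{\beta}}\right)^{\lor}$ (whose normalization is indeed consistent with $s_0=t(\varphi)s_{\varphi}$), the identity $\dr(\ed(p_K))=w_s\lon$, and Lemma~\ref{involution} combined with $\lon s_{\gamma}\lon=s_{\lon\gamma}$ (so that the last edge of $p_J$ corresponds exactly to the edge $w_s s_{\lon\overline{\tilde{\beta}_{j_r}}}\rightarrow w_s$ defining $\chi$) yield precisely the two displayed identities. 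The paper offers no argument of its own here, deferring to \cite[Proposition 3.3.6]{NNS}, and your computation is exactly the verification that citation stands for, so the approaches agree.
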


The proofs of Claims 1 and 2 are the same as those of 
 Claims 1 and 2 in \cite[Proposition 3.3.6]{NNS}, respectively. 
This proves the proposition.
\end{proof}

\begin{proof}[Proof of Theorem $\ref{thm:graded_character}$]
It follows from Propositions \ref{bijective} and \ref{qls_qb} that
\begin{align*}
\overline{C^{t(\lon \lambda)}_w}
&=
\sum_{p_J \in \QB(w; t(\lon \lambda))}q^{-\deg(\qwt(p_J))}e^{\wt(\ed(p_J))}\\
&=
\sum_{p_J \in \QB(w; t(\lon \lambda))}q^{-\Dgw(\Xi_w(p_J))}e^{\wt(\Xi_w(p_J))}\\
&=
\sum_{\eta \in \QLS(\lambda)}q^{-\Dgw(\eta)}e^{\wt(\eta)}\\
&=
\gch^{w \lon}\QLS(\lambda),
\end{align*}
as desired.
\end{proof}

\vspace{5mm}

\end{document}